\documentclass[11pt]{article}
\usepackage[a4paper,margin=1in]{geometry}
\usepackage{amsmath,amssymb,amsthm,mathtools}
\usepackage{microtype}
\usepackage{hyperref}
\usepackage{aliascnt}
\usepackage[nameinlink,capitalise,noabbrev]{cleveref}
\usepackage{lmodern}

\hypersetup{%
  colorlinks=true,
  linkcolor=blue,
  citecolor=blue,
  urlcolor=blue,
  pdftitle={The Faraday Form of Conformal Products: Weyl Curvature and Four-Dimensional Einstein Rigidity},
  pdfauthor={Jae Won Lee}
}
\allowdisplaybreaks

\newtheorem{theorem}{Theorem}[section]
\newaliascnt{lemma}{theorem}
\newtheorem{lemma}[lemma]{Lemma}
\aliascntresetthe{lemma}
\newaliascnt{proposition}{theorem}
\newtheorem{proposition}[proposition]{Proposition}
\aliascntresetthe{proposition}
\newaliascnt{corollary}{theorem}
\newtheorem{corollary}[corollary]{Corollary}
\aliascntresetthe{corollary}
\newaliascnt{remark}{theorem}
\newtheorem{remark}[remark]{Remark}
\aliascntresetthe{remark}
\newaliascnt{example}{theorem}
\newtheorem{example}[example]{Example}
\aliascntresetthe{example}
\newaliascnt{definition}{theorem}

\aliascntresetthe{definition}
\newaliascnt{question}{theorem}
\newtheorem{question}[question]{Question}
\aliascntresetthe{question}

\newcommand{\Ric}{\operatorname{Ric}}
\newcommand{\tr}{\operatorname{tr}}
\newcommand{\rk}{\operatorname{rank}}
\newcommand{\Id}{\operatorname{Id}}
\newcommand{\KW}{\mathcal K_W}
\newcommand{\HS}{\mathrm{HS}}
\newcommand{\R}{\mathbb R}
\newcommand{\Hy}{\mathbb H}
\newcommand{\Scal}{\operatorname{Scal}}
\newcommand{\Hess}{\operatorname{Hess}}
\newcommand{\Rish}{\operatorname{Ric}^\sharp}

\title{\textbf{The Faraday Form of Conformal Products: Weyl Curvature and Four-Dimensional Einstein Rigidity}}
\author{Jae Won Lee\\
Department of Mathematics Education, Gyeongsang National University\\
Jinju 52828, Republic of Korea\\
\texttt{leejaew@gnu.ac.kr}}
\date{}

\begin{document}
\maketitle

\begin{abstract}
Let $(M^n,g)$ carry a conformal product structure whose adapted Weyl connection $D$ preserves orthogonal distributions of ranks $p,q\ge2$. We express the Faraday form $d\theta$ directly in terms of the Weyl curvature of $g$. If $S$ is the orthogonal involution determined by the splitting and
\[
\mathcal K_W(X)=\sum_i W_{X,e_i}(Se_i)
\]
for any local orthonormal frame, then
\[
(d\theta)^\sharp
=
\frac{n-2}{4(p-1)(q-1)}[\mathcal K_W,S].
\]
No Ricci-curvature assumption is required. The same curvature calculation gives a companion formula for the symmetric part of $\nabla\theta$,
\[
S\Theta^s-\Theta^sS+\theta^\sharp\wedge S\theta^\sharp
=
\frac{p-q}{n-2}(d\theta)^\sharp
+\frac1{n-2}[S,\Ric^\sharp],
\qquad \Theta=\nabla\theta.
\]
When the Ricci tensor is block diagonal with respect to the product splitting, these formulas lead to explicit identities for the two components of the Lee form. In dimension four, the rank-$(2,2)$ splitting determines an ambi-Hermitian pair. The classical ambi-Hermitian curvature identities imply local closedness when the Ricci tensor is invariant under both complex structures; we also recover this conclusion from the mixed Weyl trace above. Finally, if $(M^4,g)$ is compact and Einstein, no specialness assumption is needed: $D=\nabla^g$. Hence the universal cover is the product of two simply connected surfaces with the same constant Gaussian curvature.
\end{abstract}

\medskip
\noindent\textbf{Keywords.} Einstein metric; conformal product; Weyl connection; Weyl curvature; Faraday form; ambi-Hermitian geometry; reducible holonomy; warped product.

\noindent\textbf{2020 Mathematics Subject Classification.} 53C25, 53B35, 53C29, 53C21.

\section{Introduction}

A conformal product may be viewed as a conformal manifold endowed with a Weyl connection of reducible holonomy. More precisely, the adapted Weyl connection $D$ preserves an orthogonal splitting
\[
TM=T_1\oplus T_2.
\]
Belgun and Moroianu established the local theory of conformal products and showed that the Faraday form of $D$ has only mixed components with respect to this splitting \cite{BelgunMoroianu2011}. Faraday curvature is a standard invariant in Weyl geometry \cite{Calderbank2001}. The question considered here is how the non-closedness of the adapted Weyl connection is seen by the Riemannian curvature of a chosen metric $g$ in the conformal class.

Conformally Einstein product metrics have been studied from several directions. K\"uhnel and Rademacher described the local alternatives for product metrics conformal to Einstein metrics, including the exceptional four-dimensional branch in which both factors are extremal surfaces \cite[Theorem~3.2]{KuhnelRademacher2016}. Moroianu and Pilca obtained mixed Ricci identities for conformal products and proved a compact Einstein reduction under an additional specialness assumption \cite{MoroianuPilca2024,MoroianuPilca2026}. In the constant-sectional-curvature case, Jiang used the product involution to derive a two-by-two system that forces the adapted Weyl connection to be closed when both factor ranks are at least two \cite{Jiang2026}.

Let
\[
S|_{T_1}=\Id,\qquad S|_{T_2}=-\Id,
\]
and define
\[
\mathcal K_W(X)=\sum_i W_{X,e_i}(Se_i)
\]
for any local orthonormal frame $\{e_i\}$. We write $\mathcal K_W$ for this $S$-weighted contraction of the Weyl tensor. Our first result is the identity
\begin{equation}\label{eq:intro-main}
(d\theta)^\sharp
=
\frac{n-2}{4(p-1)(q-1)}[\mathcal K_W,S],
\end{equation}
valid whenever $p=\rk T_1\ge2$ and $q=\rk T_2\ge2$. No assumption on the Ricci tensor is needed. Thus $D$ is closed if and only if $\mathcal K_W$ preserves the two summands.

For $X\in T_1$ and $U\in T_2$, \eqref{eq:intro-main} is equivalent to
\begin{equation}\label{eq:intro-mixed-Weyl}
d\theta(X,U)
=
\frac{n-2}{(p-1)(q-1)}
\sum_{a=1}^{p}W(X,e_a,e_a,U).
\end{equation}
The local curvature formulas of Moroianu--Pilca already contain the partial traces from which this scalar identity can be recovered after the Weyl correction is inserted \cite{MoroianuPilca2024}. Accordingly, the point of \eqref{eq:intro-main} is not the local partial trace by itself. The formula gives an invariant endomorphism description of the obstruction and makes the cancellation of the mixed Ricci terms transparent.

The same calculation also gives information about the symmetric part of $\nabla\theta$. If $\Theta=\nabla\theta$, then
\begin{equation}\label{eq:intro-symmetric-companion}
S\Theta^s-\Theta^sS+\theta^\sharp\wedge S\theta^\sharp
=
\frac{p-q}{n-2}(d\theta)^\sharp
+\frac1{n-2}[S,\Ric^\sharp].
\end{equation}
This identity may also be recovered from local double-twisted product formulas. The intrinsic derivation is useful because \eqref{eq:intro-main} and \eqref{eq:intro-symmetric-companion} arise from the same two-by-two curvature system. Under the block-diagonal condition $\Ric(T_1,T_2)=0$, the Ricci commutator vanishes and, for $X\in T_1$ and $U\in T_2$,
\begin{align*}
(\nabla_X\theta)(U)
&=\theta_1(X)\theta_2(U)+\frac{q-1}{n-2}d\theta(X,U),\\
(\nabla_U\theta)(X)
&=\theta_1(X)\theta_2(U)-\frac{p-1}{n-2}d\theta(X,U).
\end{align*}

Section~3 develops the consequences of the block-diagonal Ricci condition. Writing $\theta=\theta_1+\theta_2$, we obtain
\[
d\theta_1
=
\frac{p-1}{n-2}d\theta+\theta_1\wedge\theta_2,
\qquad
d\theta_2
=
\frac{q-1}{n-2}d\theta-\theta_1\wedge\theta_2.
\]
These formulas distinguish closedness of the Weyl connection from specialness of the metric in the sense of \cite{MoroianuPilca2026}. A local conformally flat Einstein example has $d\theta=0$ although neither Lee component is closed. Another local example has block-diagonal Ricci tensor but $d\theta\ne0$, showing that the block-diagonal Ricci condition alone does not imply closedness. We do not produce a non-closed Einstein example; the higher-dimensional existence question is left open in Section~5.

In dimension four, a rank-$(2,2)$ splitting gives two orthogonal complex structures $J_+$ and $J_-$ of opposite orientations, both preserved by $D$, and $S=-J_+J_-$. This is an ambi-Hermitian pair in the standard terminology \cite[Definition~1 and Lemma~1]{ApostolovCalderbankGauduchon2016}. In the Lee-form convention used both here and in \cite{ApostolovCalderbankGauduchon2016}, the two canonical Hermitian Weyl connections coincide with $D$, so the two Hermitian Lee forms are both equal to $\theta$. If the Ricci tensor is invariant under both structures, classical ambi-Hermitian curvature theory implies $d\theta=0$ \cite[Proposition~1 and Section~3.3]{ApostolovCalderbankGauduchon2016}. We then record how the same half-Weyl degeneracy conditions annihilate the mixed contraction in \eqref{eq:intro-mixed-Weyl}; this is a consistency check between the classical Hermitian description and the curvature formula \eqref{eq:intro-main}, rather than an independent proof of closedness.

The principal four-dimensional result is global. If $(M^4,g)$ is compact and Einstein and carries a rank-$(2,2)$ conformal product structure, then
\[
D=\nabla^g.
\]
Compared with \cite[Theorem~1.1]{MoroianuPilca2026}, this strengthens the conclusion in two respects in rank $(2,2)$: the specialness hypothesis is removed, and the warped-product alternative on the universal cover collapses to an actual Riemannian product. Closedness alone does not imply specialness: when $p=q=2$ and $d\theta=0$,
\[
d\theta_1=\theta_1\wedge\theta_2,
\qquad
d\theta_2=-\theta_1\wedge\theta_2.
\]
The local conformally flat example in Section~2 shows that the right-hand side need not vanish.

The compact proof uses several established global results, each at a separate stage. Belgun--Flamencourt--Moroianu exclude the non-flat, non-exact LCP branch for compact Einstein metrics \cite[Definition~4.1 and Theorem~4.5]{BelgunFlamencourtMoroianu2025}, while the flat non-exact similarity branch is treated separately using Fried's classification \cite{Fried1980}. Exactness then produces a complete product metric on the universal cover. To reduce the conformal factor to one de Rham factor we use the full four-dimensional alternative in K\"uhnel--Rademacher \cite[Theorem~3.2]{KuhnelRademacher2016}, including its extremal-surface branch; this point is essential when both factors have dimension two. Tashiro's classification of complete manifolds with a concircular scalar field is used both in excluding that extremal two-sided branch and in making the remaining warped-product base compact \cite[Theorem~1]{Tashiro1965}. The final two-dimensional ODE is the rank-$(2,2)$ specialization of the warped-product equations in \cite[Corollary~5.4, Proposition~5.7 and Lemma~5.8]{KuhnelRademacher2016}. The universal cover consequently splits as
\[
(\widetilde M,\widetilde g)
\cong
\Sigma_\lambda^2\times\Sigma_\lambda^2.
\]

Section~2 proves the curvature formula for $d\theta$ and the companion identity for the symmetric part of $\nabla\theta$. Section~3 treats the components of the Lee form under block-diagonal Ricci curvature. Section~4 recalls the ambi-Hermitian local closedness mechanism, relates it to the mixed Weyl contraction, and proves the compact Einstein rigidity theorem. Section~5 states the remaining higher-dimensional problem. Appendix~A contains an independent local calculation of the mixed Ricci tensor and the relevant partial traces.

\section{Weyl curvature and the Faraday form}

There are two useful ways to obtain the formula for the Faraday form. Appendix~A starts from local double-twisted product formulas and computes the relevant partial Weyl trace. Here we use the product involution $S$ instead. The resulting calculation is independent of a product chart and places the skew and symmetric parts of $\nabla\theta$ in the same two-by-two curvature system. It also shows directly why the Ricci commutator disappears from the final formula for $d\theta$.

We begin by fixing conventions. A Weyl connection $D$ on $(M,g)$ is written in terms of its Lee form $\theta$ as
\begin{equation}\label{eq:weyl-connection}
D_XY
=
\nabla_XY+\theta(Y)X+\theta(X)Y-g(X,Y)\theta^\sharp.
\end{equation}
Equivalently,
\[
Dg=-2\theta\otimes g.
\]
If $\theta=d\varphi$, then $D$ is the Levi-Civita connection of $e^{2\varphi}g$.

Our curvature convention is
\begin{equation}\label{eq:curvature-convention}
R(X,Y)Z
=
\nabla_X\nabla_YZ-
\nabla_Y\nabla_XZ-
\nabla_{[X,Y]}Z.
\end{equation}
Thus a metric of constant sectional curvature $\kappa$ satisfies
\[
R(X,Y)Z
=
\kappa\bigl(g(Y,Z)X-g(X,Z)Y\bigr).
\]
For a two-form $\omega$, the corresponding skew endomorphism $\omega^\sharp$ is determined by
\begin{equation}\label{eq:sharp-convention}
g(\omega^\sharp X,Y)=\omega(X,Y).
\end{equation}
We use the two-form norm convention
\[
|\omega|^2=\sum_{i<j}\omega(e_i,e_j)^2,
\]
so that $|\omega^\sharp|_{\HS}^2=2|\omega|^2$.
For vectors $u,v$, we use the skew endomorphism
\[
(u\wedge v)(X)=g(v,X)u-g(u,X)v.
\]
The Ricci tensor is $\Ric(X,Y)=\sum_ig(R(X,e_i)e_i,Y)$, and $\Rish$ denotes
the associated self-adjoint endomorphism,
\[
g(\Rish X,Y)=\Ric(X,Y),
\qquad
\Scal_g=\tr\Rish.
\]
Finally, $[A,B]=AB-BA$ denotes the commutator.

Let $D$ be the adapted Weyl connection of a conformal product with orthogonal $D$-parallel splitting
\[
TM=T_1\oplus T_2,
\qquad p=\rk T_1,\quad q=\rk T_2,
\qquad n=p+q.
\]
Define the orthogonal involution
\begin{equation}
S|_{T_1}=\Id,
\qquad
S|_{T_2}=-\Id,
\qquad
s:=\tr S=p-q.
\end{equation}
The standard conformal-product identity is
\begin{equation}\label{eq:nablaS}
\nabla_XS
=
SX\odot\theta^\sharp-S\theta^\sharp\odot X,
\end{equation}
where $u\odot v$ denotes the symmetric endomorphism $X\mapsto g(u,X)v+g(v,X)u$. Formula \eqref{eq:nablaS} is equivalent to the fact that $D$ preserves $S$; see, for example, \cite{BelgunMoroianu2011,MoroianuPilca2026}. We give all curvature calculations used below explicitly; \cite{Jiang2026} is cited only for comparison with the constant-curvature two-by-two system.

\begin{lemma}
Let $W$ be the Weyl tensor of $g$ and define
\begin{equation}\label{eq:KW-def}
\KW(X)=\sum_{i=1}^n W_{X,e_i}(Se_i),
\end{equation}
for any local orthonormal frame $\{e_i\}$. Then $\KW$ is frame-independent and self-adjoint.
\end{lemma}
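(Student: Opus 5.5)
The plan is to write $\KW$ as a trace of a tensor over one pair of slots. Frame independence and self-adjointness then follow from tensoriality and from the pair symmetry of $W$.

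\emph{Frame independence.} Fix $X$. Consider the bilinear map
\[
B_X(Y,Z)=W_{X,Y}(SZ),
\]
which takes values in $TM$. It is tensorial in $Y$ and $Z$, since $W$ is a tensor and $S$ is a bundle endomorphism. The expression $\sum_i B_X(e_i,e_i)$ is the metric trace of $B_X$ over its two arguments. Explicitly, if $e'_j=\sum_i A_{ij}e_i$ with $A$ orthogonal, then
\[
\sum_j B_X(e'_j,e'_j)=\sum_{i,k}\Bigl(\sum_j A_{ij}A_{kj}\Bigr)B_X(e_i,e_k)=\sum_i B_X(e_i,e_i),
\]
because $AA^{T}=\Id$. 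Hence $\KW(X)$ does not depend on the orthonormal frame. Linearity in $X$ is clear, so $\KW$ is a well-defined endomorphism field.

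\emph{Self-adjointness.} Write $W(X,Y,Z,V)=g(W_{X,Y}Z,V)$. Then
\[
g(\KW X,Y)=\sum_i W(X,e_i,Se_i,Y).
\]
The key step is to choose the frame adapted to the splitting, which is allowed by frame independence. Take $e_1,\dots,e_p$ spanning $T_1$ and $e_{p+1},\dots,e_n$ spanning $T_2$, so that $Se_i=\epsilon_ie_i$ with $\epsilon_i=\pm1$. This gives
\[
g(\KW X,Y)=\sum_i\epsilon_i\,W(X,e_i,e_i,Y).
\]
The Weyl tensor has the same algebraic symmetries as the Riemann tensor, in particular the pair symmetry $W(a,b,c,d)=W(c,d,a,b)$. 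Applied to each summand,
\[
W(X,e_i,e_i,Y)=W(e_i,Y,X,e_i).
\]
Using skew-symmetry in the first pair and in the second pair, the right-hand side equals $W(Y,e_i,e_i,X)$. Summing with the weights $\epsilon_i$ gives $g(\KW X,Y)=g(\KW Y,X)$. So $\KW$ is self-adjoint.

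There is no real obstacle here. The only points needing care are the sign conventions, namely that $W_{X,Y}$ is skew in $(X,Y)$ and skew-adjoint as an endomorphism, which together give the pair symmetry. One must also justify using the adapted frame, and that is exactly what the frame-independence step supplies. An alternative that avoids choosing a frame is to note that $\sum_iW(X,e_i,Se_i,Y)$ is the contraction of $W$ with the symmetric tensor $g(S\cdot,\cdot)$ in slots 2 and 3. Pair symmetry then exchanges $X$ and $Y$ directly, using the symmetry of $S$.
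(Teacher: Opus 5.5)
Your proposal is correct and follows essentially the same route as the paper. The paper treats frame-independence as a contraction of $W$ with $S$ in the middle slots, where you verify it explicitly via the orthogonal change of frame. It then proves self-adjointness exactly as you do, using an adapted frame $Se_i=\varepsilon_ie_i$ and the pair symmetry $W(X,e_i,e_i,Y)=W(Y,e_i,e_i,X)$.
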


\begin{proof}
Frame-independence follows because \eqref{eq:KW-def} is the contraction of the tensor $W$ with the endomorphism $S$ in the two middle slots. For the symmetry statement, choose an orthonormal frame adapted to the splitting, so
\[
Se_i=\varepsilon_i e_i,
\qquad \varepsilon_i\in\{1,-1\}.
\]
Then
\[
\KW(X)=\sum_i\varepsilon_i W_{X,e_i}e_i.
\]
For fixed $e_i$, set $J^W_{e_i}X=W_{X,e_i}e_i$. By the standard symmetries of $W$,
\begin{align*}
g(J^W_{e_i}X,Y)
&=W(X,e_i,e_i,Y)\\
&=W(Y,e_i,e_i,X)\\
&=g(X,J^W_{e_i}Y).
\end{align*}
Thus each $J^W_{e_i}$ is self-adjoint, and hence so is their signed sum $\KW$.
\end{proof}

\begin{example}[Einstein product model]
Let $(M_1^p,g_1)$ and $(M_2^q,g_2)$ be Einstein manifolds with the same Einstein constant $\lambda$, and take
\[
(M,g)=(M_1\times M_2,g_1+g_2).
\]
Then $g$ is Einstein and the product splitting is $\nabla$-parallel. Thus $\theta=0$, $S$ is parallel, and the curvature preserves both factors. Consequently $\KW$ is block diagonal and $[\KW,S]=0$. This is the basic equality case for the theorem below.
\end{example}

Set
\[
\Theta:=\nabla\theta,
\qquad
g(\Theta X,Y)=(\nabla_X\theta)(Y),
\]
and split
\[
\Theta^s=\frac12(\Theta+\Theta^t),
\qquad
\Theta^a=\frac12(\Theta-\Theta^t).
\]
Then
\begin{equation}
(d\theta)^\sharp=2\Theta^a.
\end{equation}
Motivated by the decomposition used in \cite{Jiang2026}, and using the skew-endomorphism convention fixed above, define
\begin{equation}
P=S\Theta^s-\Theta^sS+\theta^\sharp\wedge S\theta^\sharp,
\qquad
Q=S\Theta^aS-\Theta^a.
\end{equation}
The sign of the quadratic term in $P$ is tied to our convention
$(u\wedge v)(X)=g(v,X)u-g(u,X)v$.

\begin{lemma}[The curvature system]\label{lem:PQ-system}
For every conformal product, without any Ricci-curvature assumption,
\begin{equation}\label{eq:PQ1}
(n-2)P+sQ=[S,\Ric^\sharp],
\end{equation}
and
\begin{equation}\label{eq:PQ2}
sP+(n-2)Q
=[S,\KW]+\frac{s}{n-2}[S,\Ric^\sharp].
\end{equation}
No rank assumption on $T_1$ and $T_2$ is used here; the ranks enter only when the system is solved in \cref{thm:main-obstruction}.
\end{lemma}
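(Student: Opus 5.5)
The plan is to derive both identities from the curvature of the endomorphism field $S$. Since $S$ is a tensor field, the Ricci identity gives
\[
[R(X,Y),S]=(\nabla^2_{X,Y}S)-(\nabla^2_{Y,X}S).
\]
The left side is the commutator of the curvature endomorphism with $S$. For the right side I would differentiate \eqref{eq:nablaS} once more. Writing $\nabla_YS=SY\odot\theta^\sharp-S\theta^\sharp\odot Y$ and differentiating in $X$ produces three kinds of terms. Terms with $\Theta$ come from $\nabla_X\theta^\sharp=\Theta X$. Terms with $\nabla_XS$ come from $\nabla_X(SY)$ and $\nabla_X(S\theta^\sharp)$, and I would substitute \eqref{eq:nablaS} for these, which makes them quadratic in $\theta$. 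After skew-symmetrizing in $X,Y$, the $\nabla_{[X,Y]}S$ terms cancel. The result is an explicit formula for $[R(X,Y),S]$ that is linear in $\Theta$ (with $S$ inserted in various places) plus quadratic terms in $\theta$ and $S\theta^\sharp$.

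Next I take two independent contractions of this identity. The first is
\[
\sum_i [R(X,e_i),S]e_i .
\]
Its curvature side is $\Ric^\sharp SX-S\Ric^\sharp X$ up to sign, after using the first Bianchi identity. The second is
\[
\sum_i [R(X,e_i),S]Se_i .
\]
Its curvature side is $\sum_iR(X,e_i)e_i$ minus $S\sum_iR(X,e_i)Se_i$. This is where the $S$-weighted contraction $\KR(X)=\sum_iR_{X,e_i}Se_i$ appears, and the task reduces to computing $[S,\KR]$. For the $\Theta$ side I would work in a frame adapted to $S$, with $Se_i=\varepsilon_ie_i$. Traces then produce $\tr S=s$ and $n$. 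Each contracted expression should reorganize into combinations of $S\Theta^sS$-type terms, giving $P$, and $S\Theta^aS-\Theta^a$, giving $Q$. The coefficients in the two contractions should be $(n-2,s)$ and $(s,n-2)$ respectively. The quadratic $\theta$ terms must assemble into $\theta^\sharp\wedge S\theta^\sharp$ inside $P$, and any leftover $g$-multiples of $\Id$ must drop out because they commute with $S$.

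Finally, for \eqref{eq:PQ2} I substitute the Weyl decomposition
\[
R=W+\frac1{n-2}\,(\Ric-\tfrac{\Scal}{2(n-1)}g)\owedge g
\]
into $\KR$. The Schouten part contracted against $S$ gives terms of the form $\Rish S+S\Rish$, terms $(\tr S\Rish)\Id$, and terms $s\Rish$ and $\Scal\, S$. After commuting with $S$, everything except $\frac{s}{n-2}[S,\Rish]$ plus a possible multiple of the first contraction should die. I would then take the appropriate linear combination with \eqref{eq:PQ1} to reach exactly the stated right-hand side $[S,\KW]+\frac{s}{n-2}[S,\Rish]$.

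I expect the main obstacle to be the bookkeeping of the quadratic $\theta$ terms and of the signs under the conventions $(u\wedge v)X=g(v,X)u-g(u,X)v$ and $g(\Theta X,Y)=(\nabla_X\theta)Y$. Verifying that they combine into the single term $\theta^\sharp\wedge S\theta^\sharp$ with the same coefficient in both equations is the delicate part. To control this, I would first check the identity on pure block pairs $X,Y\in T_1$ and on mixed pairs $X\in T_1$, $Y\in T_2$ separately, using that $S$ anticommutes with mixed operators.
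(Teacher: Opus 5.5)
Your outline follows the paper's route: differentiate \eqref{eq:nablaS}, use $R_{X,Y}S=[R_{X,Y},S]$, contract, and insert the Weyl decomposition. However, the step meant to produce two equations fails as written.

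First, your two contractions are not independent. Because $S^2=\Id$, every commutator $[A,S]$ anticommutes with $S$, so
\[
\sum_i[R_{X,e_i},S]Se_i=-S\sum_i[R_{X,e_i},S]e_i=-S\,\mathcal C X,
\qquad
\mathcal C X:=\sum_i[R_{X,e_i},S]e_i .
\]
Your second contraction therefore carries no information beyond the first.

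Second, the curvature side of the first contraction is $\mathcal K_R-S\Rish$, where $\mathcal K_R X:=\sum_iR_{X,e_i}Se_i$. It is not $\pm[\Rish,S]$, and no Bianchi-type identity makes it so. For constant curvature $\kappa$ it equals $\kappa(s\Id-nS)\neq0$, while $[\Rish,S]=0$. Dropping $S$-commuting terms does not rescue the claim in general. The self-adjoint operator $\mathcal K_R$ has an off-diagonal block, containing $\Pi_-(\KW)$, which anticommutes with $S$ and is generally nonzero. Likewise, the $\Theta$-side of the full contraction contains self-adjoint off-diagonal pieces (multiples of $\Pi_-(\Theta^s)$ and of $[S,\Theta^a]$). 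These cannot be expressed through the skew-adjoint operators $P$ and $Q$. Matching the full first contraction against $P$ and $Q$ therefore cannot give \eqref{eq:PQ1}; a Weyl term would appear on its right-hand side.

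The missing idea is to split the single identity $\mathcal C=\KW+\mathcal E$ by adjointness, and then project with $\Pi_-(B)=\frac12(B-SBS)$. This is the right projection because $P$ and $Q$ are both skew-adjoint and anticommute with $S$.

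Since $\mathcal K_R$ is self-adjoint by pair symmetry (this is the relevant symmetry, not Bianchi), the skew-adjoint part $\mathcal C^{\rm sk}$ has curvature side $-\tfrac12[S,\Rish]$. The identity $2\Pi_-(\mathcal C^{\rm sk})=-\bigl((n-2)P+sQ\bigr)$ then gives \eqref{eq:PQ1}.

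The second equation is the self-adjoint off-diagonal part of $\mathcal C$. It is conveniently read off as $2\Pi_-\bigl((S\mathcal C)^{\rm sk}\bigr)=\bigl[S,\tfrac12(\mathcal C+\mathcal C^t)\bigr]$. Its curvature side is $[S,\mathcal K_R]$ and its $\Theta$-side is $sP+(n-2)Q$. This is exactly the paper's use of $B^{\rm sk}$ and $\Pi_-$.

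With this in place your last step also simplifies. We have $[S,S\Rish+\Rish S]=0$, and the $\Id$ and $S$ terms commute with $S$. Hence the Schouten part gives exactly $[S,\mathcal K_R]=[S,\KW]+\frac{s}{n-2}[S,\Rish]$, and no linear combination with \eqref{eq:PQ1} is needed.
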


\begin{proof}
Differentiating \eqref{eq:nablaS} and antisymmetrizing gives the Ricci identity
\[
R_{X,Y}S:=[R_{X,Y},S],
\]
and explicitly
\begin{align}
R_{X,Y}S={}&
SY\odot \Theta X-SX\odot \Theta Y
+S\Theta Y\odot X-S\Theta X\odot Y \notag\\
&+\theta(Y)
(SX\odot\theta^\sharp-S\theta^\sharp\odot X) \notag\\
&+\theta(X)
(S\theta^\sharp\odot Y-SY\odot\theta^\sharp) \notag\\
&-|\theta|^2(SX\odot Y-SY\odot X).
\label{eq:full-curvature-S}
\end{align}
This identity follows directly from $DS=0$ and is also used in
\cite{MoroianuPilcaReducible2025,Jiang2026}.

Let
\[
\mathcal A:=\Rish,
\qquad
\sigma:=\Scal_g.
\]
With our curvature convention the Weyl decomposition is
\begin{align}
R_{X,Y}Z={}&W_{X,Y}Z
+\frac1{n-2}\bigl(
\Ric(Y,Z)X-\Ric(X,Z)Y \notag\\
&\hspace{28mm}
+g(Y,Z)\mathcal AX-g(X,Z)\mathcal AY
\bigr) \notag\\
&-\frac{\sigma}{(n-1)(n-2)}
\bigl(g(Y,Z)X-g(X,Z)Y\bigr).
\label{eq:weyl-decomp-general}
\end{align}
Define
\[
\mathcal C X:=\sum_i(R_{X,e_i}S)e_i.
\]
Tracing the Weyl decomposition gives
\begin{equation}
\mathcal C=\KW+\mathcal E,
\end{equation}
where
\begin{equation}\label{eq:E-Ricci}
\mathcal E
=
\frac1{n-2}
\left(
\tr(\mathcal AS)\Id+s\mathcal A-\mathcal AS-(n-1)S\mathcal A
\right)
-\frac{\sigma}{(n-1)(n-2)}(s\Id-S).
\end{equation}
The first curvature trace is $\sum_iR_{X,e_i}(Se_i)$, whereas the second part of $(R_{X,e_i}S)e_i$ contributes $-S\mathcal AX$, since $\sum_iR_{X,e_i}e_i=\mathcal AX$.

We next compute the same trace from \eqref{eq:full-curvature-S}. Put $\xi:=\theta^\sharp$, and write $u\otimes v$ for the endomorphism $X\mapsto g(v,X)u$. Direct contraction gives
\begin{align}
\mathcal C={}&
s\Theta-\Theta S-(\tr \Theta)S+\tr(S\Theta)\Id-(n-1)S\Theta \notag\\
&+\xi\otimes S\xi+(n-1)S\xi\otimes\xi-s\xi\otimes\xi \notag\\
&-(n-1)|\xi|^2S+
\bigl(s|\xi|^2-\langle S\xi,\xi\rangle\bigr)\Id.
\label{eq:traced-C}
\end{align}

For any endomorphism $B$, set
\[
B^{\rm sk}:=\frac12(B-B^t),
\qquad
\Pi_-(B):=\frac12(B-SBS).
\]
The elementary pieces entering the projection are
\[
\Pi_-(S\Theta^s-\Theta^sS)=S\Theta^s-\Theta^sS,
\qquad
\Pi_-(\Theta^a)=-\frac12Q,
\]
\[
\Pi_-(S\Theta^aS)=\frac12Q,
\qquad
\Pi_-(\xi\wedge S\xi)=\xi\wedge S\xi.
\]
Substituting these terms in \eqref{eq:traced-C} yields
\begin{align}
2\Pi_-(\mathcal C^{\rm sk})
&=-(n-2)\bigl(S\Theta^s-\Theta^sS+\xi\wedge S\xi\bigr)
-s\bigl(S\Theta^aS-\Theta^a\bigr) \notag\\
&=-\bigl((n-2)P+sQ\bigr).
\end{align}
On the other hand, $\KW$ is self-adjoint. From \eqref{eq:E-Ricci}, using that both $\mathcal A$ and $S$ are self-adjoint,
\[
\mathcal E^{\rm sk}=\frac12[\mathcal A,S].
\]
The commutator $[\mathcal A,S]$ anti-commutes with $S$, hence survives $\Pi_-$. Comparing the two expressions for $\mathcal C$ gives
\[
-\bigl((n-2)P+sQ\bigr)=[\mathcal A,S],
\]
which is \eqref{eq:PQ1}.

Multiplying $\mathcal C=\KW+\mathcal E$ on the left by $S$ and projecting once more gives
\begin{align}
2\Pi_-\bigl((S\mathcal C)^{\rm sk}\bigr)
&=
s\bigl(S\Theta^s-\Theta^sS+\xi\wedge S\xi\bigr)
+(n-2)\bigl(S\Theta^aS-\Theta^a\bigr) \notag\\
&=sP+(n-2)Q.
\end{align}
Since $\KW$ is self-adjoint,
\[
2(S\KW)^{\rm sk}=S\KW-\KW S=[S,\KW].
\]
A direct multiplication of \eqref{eq:E-Ricci} by $S$ gives
\[
(S\mathcal E)^{\rm sk}
=\frac{s}{2(n-2)}[S,\mathcal A].
\]
Both commutators anti-commute with $S$, so the $\Pi_-$ projection leaves them unchanged. This proves \eqref{eq:PQ2}.

If $[\Ric^\sharp,S]=0$, both Ricci commutators vanish and the system reduces to the homogeneous first equation and the Weyl-forced second equation used in the block-diagonal Ricci specialization below. If $g$ is Einstein, this reduction is automatic.
\end{proof}

\begin{remark}
If $g$ has constant sectional curvature, then $W=0$ and $\Ric^\sharp$ is a scalar multiple of the identity. Both commutators in \cref{lem:PQ-system} therefore vanish, and one recovers the homogeneous two-by-two system appearing in Jiang's argument for the case in which both ranks are at least two, after translating conventions. The general system above shows that the Weyl commutator is the only term that survives in the Faraday equation after the Ricci terms cancel.
\end{remark}

\begin{example}[Constant-curvature check]
If $g$ has constant sectional curvature, then $W=0$ and hence $\KW=0$. The system in \cref{lem:PQ-system} becomes
\[
(n-2)P+sQ=0,\qquad sP+(n-2)Q=0.
\]
For $p,q\ge2$ its determinant is $4(p-1)(q-1)>0$, so $P=Q=0$. Since $Q=-(d\theta)^\sharp$ for a conformal product, one recovers $d\theta=0$, exactly as in the constant-curvature case with both ranks at least two of \cite{Jiang2026}.
\end{example}

\begin{theorem}[Curvature formula for the Faraday form]\label{thm:main-obstruction}
Let $(M^n,g)$ carry a conformal product structure with orthogonal $D$-parallel splitting $TM=T_1\oplus T_2$, where
\[
p=\rk T_1\ge2,
\qquad
q=\rk T_2\ge2.
\]
Then, without any Ricci-curvature assumption,
\begin{equation}\label{eq:main-obstruction}
\boxed{
(d\theta)^\sharp
=
\frac{n-2}{4(p-1)(q-1)}[\KW,S].
}
\end{equation}
Moreover,
\begin{equation}\label{eq:symmetric-companion}
\boxed{
S\Theta^s-\Theta^sS+\theta^\sharp\wedge S\theta^\sharp
=
\frac{p-q}{n-2}(d\theta)^\sharp
+\frac1{n-2}[S,\Ric^\sharp].
}
\end{equation}
In particular, under the additional block-diagonal Ricci condition $[\Ric^\sharp,S]=0$, the last correction term vanishes.
\end{theorem}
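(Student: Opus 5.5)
The plan is to solve the two-by-two linear system of \cref{lem:PQ-system} for $P$ and $Q$, and then identify $Q$ with $-(d\theta)^\sharp$. Write $R_0:=[S,\Ric^\sharp]$ and $K:=[S,\KW]$. The system then reads
\[
(n-2)P+sQ=R_0,\qquad sP+(n-2)Q=K+\tfrac{s}{n-2}R_0 .
\]
Its determinant is $(n-2)^2-s^2=(n-2-s)(n-2+s)=(2q-2)(2p-2)=4(p-1)(q-1)$. This is nonzero precisely because $p,q\ge2$, so this is the only place the rank hypothesis enters.

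Cramer's rule gives
\[
Q=\frac{(n-2)\bigl(K+\frac{s}{n-2}R_0\bigr)-sR_0}{4(p-1)(q-1)}=\frac{(n-2)K}{4(p-1)(q-1)},
\]
and the Ricci commutator cancels identically. Likewise
\[
P=\frac{(n-2)R_0-s\bigl(K+\frac{s}{n-2}R_0\bigr)}{4(p-1)(q-1)}.
\]
Rather than simplifying this directly, I will read $P$ off the first equation once $Q$ is known: $P=\frac{1}{n-2}\bigl(R_0-sQ\bigr)$.

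Next I identify $Q$. Since $(d\theta)^\sharp=2\Theta^a$, we have $Q=\tfrac12\bigl(S(d\theta)^\sharp S-(d\theta)^\sharp\bigr)$. By Belgun--Moroianu, recalled in the introduction, $d\theta$ has only mixed components with respect to $T_1\oplus T_2$. I would also verify this directly by taking the $T_1$--$T_1$ and $T_2$--$T_2$ blocks of \eqref{eq:full-curvature-S}, or by noting that the curvature of the induced connection on the $D$-parallel line bundle of $T_i$ is a multiple of $d\theta$. Hence $(d\theta)^\sharp$ anticommutes with $S$, so $S(d\theta)^\sharp S=-(d\theta)^\sharp$ and $Q=-(d\theta)^\sharp$. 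Then
\[
(d\theta)^\sharp=-\frac{n-2}{4(p-1)(q-1)}[S,\KW]=\frac{n-2}{4(p-1)(q-1)}[\KW,S],
\]
which is \eqref{eq:main-obstruction}.

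For \eqref{eq:symmetric-companion}, substituting $Q=-(d\theta)^\sharp$ into the first equation gives
\[
P=\frac{s}{n-2}(d\theta)^\sharp+\frac1{n-2}[S,\Ric^\sharp],
\]
with $s=p-q$, and by definition $P$ is the left-hand side. The final remark is immediate, since $[\Ric^\sharp,S]=0$ kills the last term.

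The only step that needs care is the mixed-type property of $d\theta$ used to get $Q=-(d\theta)^\sharp$. I would cite it from Belgun--Moroianu, as the paper already does in the example following \cref{lem:PQ-system}. Everything else is linear algebra: the determinant and the cancellation of $R_0$ in $Q$.
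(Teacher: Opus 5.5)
Your proposal is correct and is essentially the paper's own proof: you solve the system of \cref{lem:PQ-system} (determinant $4(p-1)(q-1)$, with the Ricci commutator cancelling in $Q$), use the mixed type of $d\theta$ from Belgun--Moroianu to get $Q=-(d\theta)^\sharp$, and then read $P$ off the first equation. One remark on your optional self-check: both sides of \eqref{eq:full-curvature-S} anticommute with $S$, so their $S$-diagonal blocks vanish identically and carry no information; the mixed type instead follows from the pair symmetry $R(X,U,Y,V)=R(Y,V,X,U)$ of the mixed-slot components, which gives $d\theta(X,Y)\,g(U,V)+g(X,Y)\,d\theta(U,V)=0$ for $X,Y\in T_1$ and $U,V\in T_2$.
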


\begin{proof}
The system \eqref{eq:PQ1}--\eqref{eq:PQ2} is
\[
\begin{pmatrix}
n-2&s\\
s&n-2
\end{pmatrix}
\begin{pmatrix}
P\\Q
\end{pmatrix}
=
\begin{pmatrix}
[S,\Ric^\sharp]\\[1mm]
[S,\KW]+\dfrac{s}{n-2}[S,\Ric^\sharp]
\end{pmatrix}.
\]
Its determinant is
\[
\Delta=(n-2)^2-s^2=4(p-1)(q-1)>0.
\]
Solving for $Q$, the Ricci terms cancel exactly:
\begin{align*}
\Delta Q
&=-s[S,\Ric^\sharp]
+(n-2)\left([S,\KW]+\frac{s}{n-2}[S,\Ric^\sharp]\right)\\
&=(n-2)[S,\KW].
\end{align*}

For every conformal product, if $\theta=\theta_1+\theta_2$ according to $T_1^*\oplus T_2^*$, then
\[
d_1\theta_1=0,
\qquad
d_2\theta_2=0;
\]
see \cite[Lemma~4.6]{BelgunMoroianu2011} and \cite[Remark~2.4]{MoroianuPilca2026}. Consequently $d\theta$ is of mixed type, so
\[
S(d\theta)^\sharp S=-(d\theta)^\sharp.
\]
Since $(d\theta)^\sharp=2\Theta^a$,
\[
Q=S\Theta^aS-\Theta^a=-(d\theta)^\sharp.
\]
Thus
\[
(d\theta)^\sharp
=-\frac{n-2}{\Delta}[S,\KW]
=\frac{n-2}{\Delta}[\KW,S],
\]
which proves \eqref{eq:main-obstruction}.

Solving the first equation for $P$ now gives
\[
P=\frac{s}{n-2}(d\theta)^\sharp+\frac1{n-2}[S,\Ric^\sharp].
\]
Since $s=p-q$, this is exactly \eqref{eq:symmetric-companion}.
\end{proof}

\begin{corollary}[Mixed covariant derivative of the Lee form]\label{cor:mixed-nabla-theta}
For $X\in T_1$ and $U\in T_2$ one has, without any Ricci assumption,
\begin{align}
(\nabla_X\theta)(U)
&=\theta_1(X)\theta_2(U)
+\frac{q-1}{n-2}d\theta(X,U)
+\frac1{n-2}\Ric(X,U),
\\
(\nabla_U\theta)(X)
&=\theta_1(X)\theta_2(U)
-\frac{p-1}{n-2}d\theta(X,U)
+\frac1{n-2}\Ric(X,U).
\end{align}
If $\Ric(T_1,T_2)=0$, these reduce to
\begin{align}
(\nabla_X\theta)(U)
&=\theta_1(X)\theta_2(U)+\frac{q-1}{n-2}d\theta(X,U),
\\
(\nabla_U\theta)(X)
&=\theta_1(X)\theta_2(U)-\frac{p-1}{n-2}d\theta(X,U).
\end{align}
\end{corollary}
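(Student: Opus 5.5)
The plan is to read off the corollary by evaluating the symmetric companion identity \eqref{eq:symmetric-companion} of \cref{thm:main-obstruction} on a mixed pair $X\in T_1$, $U\in T_2$, and then to recombine it with the skew part $(d\theta)^\sharp=2\Theta^a$. Since $\Theta=\Theta^s+\Theta^a$, it is enough to compute $g(\Theta^sX,U)$ and $g(\Theta^aX,U)$. The second one is immediate:
\[
g(\Theta^aX,U)=\tfrac12\,d\theta(X,U).
\]

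The key step is pairing the left side of \eqref{eq:symmetric-companion} with $X$ and $U$. First, $SX=X$ and $SU=-U$, and $\Theta^s$ is self-adjoint. Hence
\[
g\bigl((S\Theta^s-\Theta^sS)X,U\bigr)=g(\Theta^sX,SU)-g(\Theta^sX,U)=-2g(\Theta^sX,U).
\]
Next comes the quadratic term. Write $\xi=\theta^\sharp$, so that $S\xi=\xi_1-\xi_2$ with $\xi_i$ the $T_i$-components. Using the convention $(u\wedge v)(X)=g(v,X)u-g(u,X)v$, we get
\[
g\bigl((\xi\wedge S\xi)X,U\bigr)=\theta_1(X)g(\xi,U)-\theta(X)g(S\xi,U)=\theta_1(X)\theta_2(U)+\theta_1(X)\theta_2(U)=2\theta_1(X)\theta_2(U).
\]
On the right side, $g([S,\Ric^\sharp]X,U)=\Ric(X,SU)-\Ric(X,U)=-2\Ric(X,U)$, and the $d\theta$ term gives $\tfrac{p-q}{n-2}d\theta(X,U)$. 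Equating the two sides yields
\[
g(\Theta^sX,U)=\theta_1(X)\theta_2(U)-\frac{p-q}{2(n-2)}d\theta(X,U)+\frac1{n-2}\Ric(X,U).
\]

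Now combine the two pieces. We have $(\nabla_X\theta)(U)=g(\Theta X,U)=g(\Theta^sX,U)+\tfrac12 d\theta(X,U)$. The coefficient of $d\theta$ is then
\[
\tfrac12-\tfrac{p-q}{2(n-2)}=\tfrac{(p+q-2)-(p-q)}{2(n-2)}=\tfrac{q-1}{n-2}.
\]
For the other derivative, $(\nabla_U\theta)(X)=g(\Theta U,X)=g(\Theta^sX,U)-\tfrac12 d\theta(X,U)$, so the coefficient becomes
\[
-\tfrac12-\tfrac{p-q}{2(n-2)}=-\tfrac{p-1}{n-2}.
\]
The block-diagonal case is then just the specialization $\Ric(X,U)=0$.

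The only real obstacle is sign bookkeeping: the sharp convention \eqref{eq:sharp-convention}, the convention $g(\Theta X,Y)=(\nabla_X\theta)(Y)$, and the sign of the wedge term. As a check, the difference of the two formulas should equal $d\theta(X,U)=(\nabla_X\theta)(U)-(\nabla_U\theta)(X)$. It does, because $\tfrac{q-1}{n-2}+\tfrac{p-1}{n-2}=1$.
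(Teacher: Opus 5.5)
Your proposal is correct and is essentially the paper's own proof. The paper likewise takes the $(X,U)$ component of \eqref{eq:symmetric-companion} using the same three evaluations $-2\Theta^s(X,U)$, $2\theta_1(X)\theta_2(U)$ and $-2\Ric(X,U)$, solves for $\Theta^s(X,U)$, and then adds and subtracts $\tfrac12 d\theta(X,U)$ via $\Theta^a=\tfrac12(d\theta)^\sharp$; your signs and coefficients check out.
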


\begin{proof}
Write $\xi=\theta^\sharp$. For $X\in T_1$ and $U\in T_2$,
\[
g\bigl((S\Theta^s-\Theta^sS)X,U\bigr)=-2\Theta^s(X,U),
\]
and
\[
g\bigl((\xi\wedge S\xi)X,U\bigr)=2\theta_1(X)\theta_2(U).
\]
Moreover,
\[
g([S,\Ric^\sharp]X,U)=-2\Ric(X,U).
\]
Taking the $(X,U)$ component of \eqref{eq:symmetric-companion} gives
\[
\Theta^s(X,U)
=\theta_1(X)\theta_2(U)
-\frac{p-q}{2(n-2)}d\theta(X,U)
+\frac1{n-2}\Ric(X,U).
\]
Since $\Theta^a=\tfrac12(d\theta)^\sharp$, adding and subtracting $\tfrac12d\theta(X,U)$ proves the two general formulas. The block-diagonal Ricci specialization is immediate.
\end{proof}

\begin{corollary}[Partial Ricci form of the obstruction]\label{cor:partial-Ricci}
Assume in addition that $\Ric(T_1,T_2)=0$. Let
$X\in T_1$, $U\in T_2$, and let $e_1,\ldots,e_p$ be a local orthonormal
frame of $T_1$. Then
\begin{equation}\label{eq:partial-Ricci-obstruction}
\boxed{
d\theta(X,U)
=
\frac{n-2}{(p-1)(q-1)}
\sum_{a=1}^{p}R(X,e_a,e_a,U).
}
\end{equation}
Under this block-diagonal Ricci condition the same sum may be written with $W$ in place of $R$.
\end{corollary}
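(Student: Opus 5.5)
Starting from \cref{thm:main-obstruction}, I would evaluate the $(X,U)$ component of \eqref{eq:main-obstruction} and then rewrite the Weyl sum as a partial Riemann trace using the Weyl decomposition \eqref{eq:weyl-decomp-general}.

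\emph{Step 1: the $(X,U)$ component of the commutator.} Take an adapted orthonormal frame, with $e_1,\dots,e_p$ spanning $T_1$ and $e_{p+1},\dots,e_n$ spanning $T_2$. Then
\[
\KW(X)=\sum_{a\le p}W_{X,e_a}e_a-\sum_{\alpha>p}W_{X,e_\alpha}e_\alpha .
\]
Since $SX=X$ and $SU=-U$, we have
\[
g([\KW,S]X,U)=g(\KW X,U)-g(\KW X,SU)=2g(\KW X,U).
\]
The trace-freeness of $W$ gives $\sum_{\alpha}W(X,e_\alpha,e_\alpha,U)=-\sum_a W(X,e_a,e_a,U)$. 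Hence
\[
g(\KW X,U)=2\sum_{a=1}^pW(X,e_a,e_a,U).
\]
By convention \eqref{eq:sharp-convention}, $d\theta(X,U)=g((d\theta)^\sharp X,U)$. Combining these,
\[
d\theta(X,U)=\frac{n-2}{4(p-1)(q-1)}\cdot4\sum_aW(X,e_a,e_a,U).
\]
This is \eqref{eq:intro-mixed-Weyl}, and it holds with no Ricci assumption.

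\emph{Step 2: replace $W$ by $R$ under $\Ric(T_1,T_2)=0$.} Insert $Y=Z=e_a$ in \eqref{eq:weyl-decomp-general}, pair with $U$, and sum over $a\le p$. The scalar-curvature term contributes
\[
g(e_a,e_a)g(X,U)-g(X,e_a)g(e_a,U),
\]
and both pieces vanish because $g(X,U)=0$ and $g(e_a,U)=0$. The Ricci terms contribute
\[
\Ric(e_a,e_a)g(X,U)-\Ric(X,e_a)g(e_a,U)+\Ric(X,U)-g(X,e_a)\Ric(e_a,U).
\]
The first two pieces vanish by orthogonality. Summing the last two over $a$ leaves $p\Ric(X,U)-\Ric(X,U)=(p-1)\Ric(X,U)$, which is zero when the Ricci tensor is block diagonal. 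Hence $\sum_aR(X,e_a,e_a,U)=\sum_aW(X,e_a,e_a,U)$, which gives \eqref{eq:partial-Ricci-obstruction} together with its stated $W$-version.

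The main obstacle I expect is the sign and index bookkeeping for the four-argument tensor $R(X,Y,Z,U)=g(R_{X,Y}Z,U)$, specifically confirming that $g(\KW X,U)$ really is $+2\sum_aW(X,e_a,e_a,U)$. I would verify this once against the convention in the proof of the self-adjointness lemma, where $g(J^W_{e_i}X,Y)=W(X,e_i,e_i,Y)$. Everything else is a routine trace.
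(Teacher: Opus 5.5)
Your proof is correct, but it runs in the opposite direction from the paper's.

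\textbf{The paper's route.} The paper proves the $R$-version first, from the local computation in the appendix (\cref{app:partial-traces}). In a product chart $g=e^{2f_1}g_1+e^{2f_2}g_2$, put $u=X(Uf_1)$, $v=X(Uf_2)$, $w=X(f_2)U(f_1)$. The partial traces over $T_1$ and $T_2$ are then $(p-1)(w-u)$ and $(q-1)(w-v)$. The hypothesis $\Ric(X,U)=0$ says their sum vanishes, which eliminates $w$. The relation $d\theta(X,U)=v-u$ is read off from $\theta_1=-d_1f_2$ and $\theta_2=-d_2f_1$. The $W$-version is then obtained exactly as in your Step~2.

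\textbf{Your route.} You obtain the Weyl identity first, from \cref{thm:main-obstruction} together with trace-freeness of $W$. This is precisely \cref{cor:mixed-trace}, whose proof does not depend on the present corollary, so there is no circularity. Your sign check is consistent: $g(\KW X,U)=\sum_i\varepsilon_iW(X,e_i,e_i,U)$ with the same four-tensor convention used in the self-adjointness lemma. You then convert to $R$ through the Kulkarni--Nomizu partial trace. You correctly compute that trace as $\frac{p-1}{n-2}\Ric(X,U)$; this is \eqref{eq:app-Weyl-correction}.

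\textbf{What each approach buys.} Your argument is chart-free. It shows the corollary is a formal consequence of the main theorem, with the block-diagonal hypothesis entering only to identify the two partial traces. However, it inherits everything behind \cref{thm:main-obstruction}: the $P,Q$ curvature system, the mixed type of $d\theta$ from Belgun--Moroianu, and the invertibility of the $2\times2$ system. So it genuinely needs $p,q\ge2$, which is harmless here since the statement divides by $(p-1)(q-1)$.

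The paper's local route is independent of the involution calculus, so it serves as a cross-check on the main theorem. It also yields the undivided identity $\sum_aR(X,e_a,e_a,U)=\frac{(p-1)(q-1)}{n-2}\,d\theta(X,U)$ for all ranks, together with the relation that the $T_2$-partial trace is minus the $T_1$-partial trace. The cost is reliance on the local double-twisted product form of a conformal product.
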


\begin{proof}
By \cref{app:partial-traces},
\[
\sum_{a=1}^{p}R(X,e_a,e_a,U)
=
\frac{(p-1)(q-1)}{n-2}\,d\theta(X,U)
\]
whenever $\Ric(X,U)=0$, proving \eqref{eq:partial-Ricci-obstruction}.
For the Weyl tensor, the Kulkarni--Nomizu correction in
\eqref{eq:weyl-decomp-general} has zero $T_1$-partial trace on
$(X,U)$ when $\Ric(T_1,T_2)=0$, so the two partial traces agree.
\end{proof}

\begin{remark}[Equal ranks]
If $p=q$, then \eqref{eq:symmetric-companion} becomes
\[
S\Theta^s-\Theta^sS+\theta^\sharp\wedge S\theta^\sharp
=\frac1{n-2}[S,\Ric^\sharp].
\]
Hence the Faraday term drops out of the symmetric relation. Under block-diagonal Ricci curvature this further reduces to
\[
S\Theta^s-\Theta^sS=-\theta^\sharp\wedge S\theta^\sharp,
\]
so the mixed symmetric part is then fixed algebraically by the Lee form itself.
\end{remark}

\begin{remark}[Independence of the ordering of the factors]
Interchanging $T_1$ and $T_2$ sends
\[
S\longmapsto -S,
\qquad
\KW\longmapsto-\KW,
\]
while $p$ and $q$ are exchanged. Hence both the coefficient in
\eqref{eq:main-obstruction} and the commutator $[\KW,S]$ are unchanged.
Thus the curvature formula for the Faraday form is intrinsic to the unordered splitting. In the symmetric companion, $s=p-q$ and $[S,\Ric^\sharp]$ both change sign, as does the left-hand side, so \eqref{eq:symmetric-companion} is equally consistent with the swap.
\end{remark}

\begin{remark}[Why the condition $p,q\ge2$ is essential]
If $p=1$ or $q=1$, then $4(p-1)(q-1)=0$ and the two-by-two system is singular. In particular, the Faraday form is no longer algebraically determined by the Weyl commutator through this system. The cases in which one distribution has rank one therefore require a separate analysis and admit warped-type alternatives not governed by the invertible system used here; compare the rank-one discussions in \cite{MoroianuPilca2026,Jiang2026}.
\end{remark}

\begin{corollary}[Characterization of closedness]
For every conformal product with $p,q\ge2$,
\begin{equation}
D\text{ is closed}
\iff
d\theta=0
\iff
[\KW,S]=0.
\end{equation}
Equivalently, $D$ is closed exactly when $\KW$ preserves $T_1$ and $T_2$.
\end{corollary}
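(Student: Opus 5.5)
The plan is to read this corollary off \cref{thm:main-obstruction}. The hypotheses $p,q\ge2$ are exactly those of the theorem, so the boxed formula \eqref{eq:main-obstruction} holds. Its coefficient $\frac{n-2}{4(p-1)(q-1)}$ is finite and nonzero, because $n-2=p+q-2\ge2$ and $(p-1)(q-1)\ge1$. Hence $(d\theta)^\sharp=0$ if and only if $[\KW,S]=0$. Since $\sharp$ is an isomorphism between two-forms and skew endomorphisms by \eqref{eq:sharp-convention}, this gives $d\theta=0\iff[\KW,S]=0$.

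For the first equivalence, I will recall that a Weyl connection is called closed when it is locally the Levi-Civita connection of a metric in the conformal class. By \eqref{eq:weyl-connection}, $D$ is the Levi-Civita connection of $e^{2\varphi}g$ exactly when $\theta=d\varphi$. Closedness is therefore equivalent to $\theta$ being locally exact, and by the Poincar\'e lemma this is equivalent to $d\theta=0$. Equivalently, $d\theta$ is the Faraday curvature of $D$, whose vanishing is the definition of closedness in Weyl geometry.

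For the reformulation, the commutator $[\KW,S]$ vanishes if and only if $\KW$ commutes with the involution $S$. This holds if and only if $\KW$ preserves the eigenspaces of $S$, namely $T_1$ (eigenvalue $+1$) and $T_2$ (eigenvalue $-1$). One direction is clear: if $SK=KS$ and $Sv=\pm v$, then $S(Kv)=\pm Kv$. Conversely, if $K$ preserves both summands, then $KS$ and $SK$ agree on each summand. Because $\KW$ is self-adjoint by the first lemma of this section, preserving $T_1$ is in fact equivalent to preserving $T_2=T_1^\perp$.

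There is no real obstacle here. The only point needing care is that the coefficient in \eqref{eq:main-obstruction} is nonzero, and this is precisely where $p,q\ge2$ is used.
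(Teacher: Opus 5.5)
Your proposal is correct and follows essentially the same route as the paper: the first equivalence comes from the definition of a closed Weyl connection, the second from the boxed formula \eqref{eq:main-obstruction} with its nonzero coefficient, and the reformulation from the fact that an endomorphism commutes with $S$ exactly when it preserves the eigenspaces $T_1$ and $T_2$. Your extra remarks are fine but not needed for the argument. These are the Poincar\'e-lemma justification of closedness, the explicit check that the coefficient is nonzero, and the observation that self-adjointness of $\KW$ makes preserving $T_1$ equivalent to preserving $T_2$.
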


\begin{proof}
The first equivalence is the definition of a closed Weyl connection. The second follows from \eqref{eq:main-obstruction}. Since $S$ has the two eigenspaces $T_1,T_2$, an endomorphism commutes with $S$ if and only if it preserves both eigenspaces.
\end{proof}

\begin{corollary}[Pointwise norm identity]\label{cor:energy}
For every conformal product with $p,q\ge2$,
\begin{equation}\label{eq:energy}
\boxed{
|d\theta|^2
=
\frac{(n-2)^2}{32(p-1)^2(q-1)^2}
|[\KW,S]|_{\HS}^2.
}
\end{equation}
\end{corollary}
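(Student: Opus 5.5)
The plan is to take Hilbert--Schmidt norms on both sides of \eqref{eq:main-obstruction} in \cref{thm:main-obstruction} and then convert to the two-form norm through the normalization fixed in Section~2. Since \eqref{eq:main-obstruction} is a pointwise identity between skew endomorphisms, I would first compute
\[
|(d\theta)^\sharp|_{\HS}^2=\frac{(n-2)^2}{16(p-1)^2(q-1)^2}\,|[\KW,S]|_{\HS}^2 .
\]
The constant factor simply squares under $|\cdot|_{\HS}$, because the norm is homogeneous of degree two.

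Next I would apply the convention $|\omega^\sharp|_{\HS}^2=2|\omega|^2$ with $\omega=d\theta$. To be safe, I would verify it in an orthonormal frame. We have $g(\omega^\sharp e_i,e_j)=\omega(e_i,e_j)$, so
\[
|\omega^\sharp|_{\HS}^2=\sum_{i,j}\omega(e_i,e_j)^2=2\sum_{i<j}\omega(e_i,e_j)^2 .
\]
Dividing the first display by $2$ then gives exactly the constant $\frac{(n-2)^2}{32(p-1)^2(q-1)^2}$ stated in \eqref{eq:energy}.

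Optionally, I would add a sanity remark. By the self-adjointness lemma for $\KW$, the commutator $[\KW,S]$ is skew. It anticommutes with $S$, so it is purely off-diagonal. Its HS norm therefore equals $2\sum_{X,U}|g([\KW,S]X,U)|^2$ over adapted frames. Moreover $g([\KW,S]e_a,e_\alpha)=2g(\KW e_a,e_\alpha)$, which is consistent with \eqref{eq:intro-mixed-Weyl}. This remark is not needed for the proof.

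There is no real obstacle here. The only point requiring care is the factor of $2$ between the two norm conventions. I would check that factor explicitly, since it is the sole source of the $32$ rather than $16$ in \eqref{eq:energy}.
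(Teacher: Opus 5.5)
Your proposal is correct and is essentially the paper's proof: take Hilbert--Schmidt norms in \eqref{eq:main-obstruction}, square the constant to get $16$ in the denominator, and convert with $|\omega^\sharp|_{\HS}^2=2|\omega|^2$ to obtain the $32$. Your explicit frame check of that factor of $2$, and the optional off-diagonal remark, are both accurate.
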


\begin{proof}
For any two-form $\omega$, our convention \eqref{eq:sharp-convention} gives
\[
|\omega^\sharp|_{\HS}^2=2|\omega|^2.
\]
Take Hilbert--Schmidt norms in \eqref{eq:main-obstruction}.
\end{proof}

\begin{corollary}[Mixed partial Weyl trace]\label{cor:mixed-trace}
Let $(M^n,g)$ carry a conformal product structure with $p=\rk T_1\ge2$ and $q=\rk T_2\ge2$. Let $X\in T_1$ and $U\in T_2$, and let $e_1,\ldots,e_p$ be a local orthonormal frame of $T_1$. Then
\begin{equation}\label{eq:mixed-partial-Weyl}
\boxed{
d\theta(X,U)
=
\frac{n-2}{(p-1)(q-1)}
\sum_{a=1}^p W(X,e_a,e_a,U).
}
\end{equation}
\end{corollary}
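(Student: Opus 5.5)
Corollary~\ref{cor:mixed-trace} should follow by taking the $(X,U)$ matrix coefficient of the boxed identity \eqref{eq:main-obstruction} in Theorem~\ref{thm:main-obstruction}. No new curvature computation is needed.

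First, evaluate the left-hand side. By the convention \eqref{eq:sharp-convention}, $g\bigl((d\theta)^\sharp X,U\bigr)=d\theta(X,U)$.

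Next, evaluate the right-hand side. Since $SX=X$ and $SU=-U$,
\[
g([\KW,S]X,U)=g(\KW X,U)-g(\KW SX,U)\cdot 0 - g(S\KW X,U)+g(\KW X,U),
\]
which is cleaner to write as
\[
g(\KW SX,U)-g(S\KW X,U)=g(\KW X,U)+g(\KW X,U)=2\,g(\KW X,U),
\]
using the self-adjointness of $S$ on the second term. Hence
\[
d\theta(X,U)=\frac{n-2}{2(p-1)(q-1)}\,g(\KW X,U).
\]

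Now expand $g(\KW X,U)$ in an orthonormal frame adapted to the splitting: $e_1,\dots,e_p$ in $T_1$ and $f_1,\dots,f_q$ in $T_2$. This frame is allowed by the frame-independence in the lemma defining $\KW$. We get
\[
g(\KW X,U)=\sum_{a}W(X,e_a,e_a,U)-\sum_{b}W(X,f_b,f_b,U).
\]
The full trace $\sum_iW(X,e_i,e_i,U)$ is the Ricci contraction of the Weyl tensor, which vanishes. Therefore $\sum_bW(X,f_b,f_b,U)=-\sum_aW(X,e_a,e_a,U)$, and
\[
g(\KW X,U)=2\sum_aW(X,e_a,e_a,U).
\]
Substituting this into the previous display gives exactly the factor $\frac{n-2}{(p-1)(q-1)}$ in \eqref{eq:mixed-partial-Weyl}.

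The main obstacle is purely bookkeeping: sign and slot conventions. We must check three things:
\begin{itemize}
\item the slot pairing $W(X,e_i,e_i,U)=g(W_{X,e_i}e_i,U)$ matches the one in the proof of the lemma defining $\KW$;
\item the trace of $W$ over the middle slots is indeed the (vanishing) Weyl-Ricci contraction under the paper's convention $\Ric(X,Y)=\sum_i g(R(X,e_i)e_i,Y)$;
\item the commutator is evaluated in the correct order $[\KW,S]$ rather than $[S,\KW]$.
\end{itemize}
All three are immediate from the conventions fixed at the start of the section.
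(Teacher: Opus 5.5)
Your proposal is correct and follows the paper's proof. You take the $(X,U)$ coefficient of \eqref{eq:main-obstruction}, use $[\KW,S]X=\KW X-S\KW X$ to get $2g(\KW X,U)$, and then expand $\KW$ in an adapted frame, using that $W$ is trace-free to double the $T_1$ partial trace. Your first display for $g([\KW,S]X,U)$ is garbled and should be deleted; the second one, $g(\KW SX,U)-g(S\KW X,U)=2g(\KW X,U)$, is the correct computation.
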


\begin{proof}
For $X\in T_1$,
\[
[\KW,S]X
=\KW X-S\KW X
=2(\KW X)_{T_2}.
\]
Thus \cref{thm:main-obstruction} gives
\[
d\theta(X,U)
=
\frac{n-2}{2(p-1)(q-1)}g(\KW X,U).
\]
For an adapted orthonormal frame $e_1,\dots,e_p,f_1,\dots,f_q$,
\[
g(\KW X,U)
=
\sum_{a=1}^pW(X,e_a,e_a,U)
-
\sum_{\alpha=1}^qW(X,f_\alpha,f_\alpha,U).
\]
The Weyl tensor is trace-free, so the two sums add to zero. Hence the displayed expression equals twice the first sum, proving \eqref{eq:mixed-partial-Weyl}.
\end{proof}

\begin{example}[A non-closed example with block-diagonal Ricci tensor]\label{ex:nonclosed-block-Ricci}
The hypothesis $\Ric(T_1,T_2)=0$ does not force the right-hand side of
\eqref{eq:main-obstruction} to vanish.  Let
\[
M=\R^p\times\R^q,\qquad p,q\ge2,
\]
with Euclidean factor metrics, put $t=x_1+y_1$, and choose constants
$c\ne0$ and $\mu\ne0$.  Set
\[
\gamma=\frac{n-2}{q-1}\,c,\qquad
f_1=c\,t,\qquad
f_2=\mu\,e^{\gamma t},
\]
and consider the local conformal-product metric
\[
g=e^{2f_1}g_{\R^p}+e^{2f_2}g_{\R^q}.
\]
The mixed Ricci formula of \cref{app:mixed-Ricci} gives
\begin{align*}
\Ric(X,U)
&=(1-p)X(Uf_1)+(1-q)X(Uf_2)
 +(n-2)X(f_2)U(f_1).
\end{align*}
Both $f_1$ and $f_2$ depend only on $t$, so the only pair that can
contribute is $X=\partial_{x_1}$, $U=\partial_{y_1}$; for all other mixed
pairs every term above vanishes.  For this pair $X(Uf_1)=0$ and
\[
X(Uf_2)=\mu\gamma^2e^{\gamma t},
\qquad
X(f_2)U(f_1)=\mu\gamma c\,e^{\gamma t},
\]
so
\[
\Ric(X,U)=\mu\gamma e^{\gamma t}\bigl((1-q)\gamma+(n-2)c\bigr)=0
\]
by the definition of $\gamma$.  Thus $\Ric(T_1,T_2)=0$, that is,
$[\Rish,S]=0$.

By \eqref{eq:app-Lee}, the adapted Weyl connection has
\[
\theta_1=-\mu\gamma e^{\gamma t}\,dx_1,
\qquad
\theta_2=-c\,dy_1,
\]
so $d\theta_2=0$ and
\[
d\theta
=
d\theta_1
=
\mu\gamma^2e^{\gamma t}\,dx_1\wedge dy_1\ne0.
\]
Consequently \cref{thm:main-obstruction} gives $[\KW,S]\ne0$, and one checks
directly that
\[
\frac{n-2}{(p-1)(q-1)}\sum_{a=1}^{p}R(X,e_a,e_a,U)
=
\mu\gamma^2e^{\gamma t}
=
d\theta(X,U),
\]
in agreement with \cref{cor:partial-Ricci}.  This example shows that even the additional block-diagonal Ricci condition used in Section~3 does not force closedness, and that the Weyl commutator in the universal identity can be nonzero. We make no claim here that this model is Einstein, nor do we claim the existence of a non-closed Einstein conformal product.
\end{example}

\begin{example}[A closed but non-special conformally flat model]\label{ex:hyperbolic}
Let $p,q\ge2$ and
\[
M=\{(x,y)\in\R^p\times\R^q:x_1+y_1>0\},
\qquad t=x_1+y_1.
\]
Put
\begin{equation}
g=t^{-2}(g_{\R^p}+g_{\R^q}).
\end{equation}
After the orthogonal change $u=(x_1+y_1)/\sqrt2$, this is
\[
g=\frac12\,g_{\Hy^n},
\]
so its sectional curvature is $-2$ and
\[
\Ric_g=-2(n-1)g.
\]
In particular $g$ is Einstein and $W=0$. Take as $D$ the Levi-Civita connection of the Euclidean product metric. With the convention \eqref{eq:weyl-connection}, its Lee form relative to $g$ is
\[
\theta=\frac{dt}{t}.
\]
Thus
\[
d\theta=0,
\]
in agreement with \cref{thm:main-obstruction}. However, writing
\[
\theta_1=\frac{dx_1}{t},
\qquad
\theta_2=\frac{dy_1}{t},
\]
one finds
\[
d\theta_1=\frac{dx_1\wedge dy_1}{t^2}\ne0,
\qquad
 d\theta_2=-\frac{dx_1\wedge dy_1}{t^2}\ne0.
\]
Thus closedness of $D$ does not imply specialness, even in an Einstein and conformally flat local model.
\end{example}

\section{Components of the Lee form and specialness}

Throughout this section we assume that the Ricci tensor is block diagonal with respect to the splitting,
\[
\Ric(T_1,T_2)=0,
\qquad
p,q\ge2,
\]
This condition is automatic when $g$ is Einstein. We write
\[
\theta=\theta_1+\theta_2,
\qquad
\theta_i\in T_i^*.
\]
Following \cite{MoroianuPilca2026}, the metric $g$ is called \emph{special} with respect to the conformal product if at least one of $\theta_1,\theta_2$ is closed. Thus specialness is weaker than simultaneous closedness of both components of the Lee form, and it is not equivalent to closedness of the full Weyl connection.

\begin{proposition}[Mixed identity under block-diagonal Ricci curvature]\label{prop:mixed-Lee}
The components of the Lee form satisfy
\begin{equation}\label{eq:mixed-Lee}
\boxed{
(q-1)d\theta_1-(p-1)d\theta_2
=
(n-2)\theta_1\wedge\theta_2.
}
\end{equation}
\end{proposition}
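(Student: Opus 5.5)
The plan is to compute $d\theta_1$ and $d\theta_2$ separately, type by type with respect to $TM=T_1\oplus T_2$, and then form the combination $(q-1)d\theta_1-(p-1)d\theta_2$. The mixed components will come from \cref{cor:mixed-nabla-theta} in its block-diagonal form together with \eqref{eq:nablaS}. The pure components are handled first.

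\emph{Pure components.} On $T_1\times T_1$ we have $d\theta_1=d_1\theta_1=0$ by \cite[Lemma~4.6]{BelgunMoroianu2011}, as already used in the proof of \cref{thm:main-obstruction}. On $T_2\times T_2$ the form $\theta_1$ vanishes. Moreover $T_2$ is integrable, since it is parallel for the torsion-free connection $D$. Hence $d\theta_1(U,V)=U\theta_1(V)-V\theta_1(U)-\theta_1([U,V])=0$ for $U,V\in T_2$. The same two arguments show that $d\theta_2$ vanishes on $T_1\times T_1$ and on $T_2\times T_2$. Since $\theta_1\wedge\theta_2$ is also of mixed type, it suffices to check \eqref{eq:mixed-Lee} on pairs $(X,U)$ with $X\in T_1$ and $U\in T_2$. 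On such a pair, $(\theta_1\wedge\theta_2)(X,U)=\theta_1(X)\theta_2(U)$.

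\emph{Mixed component of $d\theta_1$.} Write $\theta_1(Y)=\tfrac12\bigl(\theta(Y)+\theta(SY)\bigr)$ and put $\xi=\theta^\sharp$. Differentiating gives
\[
(\nabla_Z\theta_1)(Y)=\tfrac12\bigl((\nabla_Z\theta)(Y)+(\nabla_Z\theta)(SY)+\theta((\nabla_ZS)Y)\bigr).
\]
Evaluating \eqref{eq:nablaS} yields $(\nabla_XS)U=2\theta_2(U)X$ and $(\nabla_US)X=-2\theta_1(X)U$. From these I get
\[
(\nabla_X\theta_1)(U)=\theta_1(X)\theta_2(U),\qquad
(\nabla_U\theta_1)(X)=(\nabla_U\theta)(X)-\theta_1(X)\theta_2(U).
\]
Inserting the second formula of \cref{cor:mixed-nabla-theta} with $\Ric(X,U)=0$ gives
\[
d\theta_1(X,U)=\theta_1(X)\theta_2(U)+\frac{p-1}{n-2}\,d\theta(X,U).
\]
Since $d\theta=d\theta_1+d\theta_2$, it follows that
\[
d\theta_2(X,U)=-\theta_1(X)\theta_2(U)+\frac{q-1}{n-2}\,d\theta(X,U).
\]

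\emph{Conclusion.} In the combination $(q-1)d\theta_1-(p-1)d\theta_2$ the $d\theta$ terms cancel, because both carry the coefficient $\frac{(p-1)(q-1)}{n-2}$. What remains is $\bigl((q-1)+(p-1)\bigr)\theta_1(X)\theta_2(U)=(n-2)\,\theta_1(X)\theta_2(U)$, which is \eqref{eq:mixed-Lee}.

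The main obstacle is sign bookkeeping. I need to evaluate $\nabla S$ correctly with the $\odot$ convention, in particular the signs of $g(S\xi,U)=-\theta_2(U)$ and of $SU=-U$. I also need to check that the formula used for $(\nabla_U\theta)(X)$ matches the orientation in \cref{cor:mixed-nabla-theta}. As a check I will test the result on \cref{ex:hyperbolic}: there $d\theta=0$ and $d\theta_1=\theta_1\wedge\theta_2$, consistent with the displayed formula for $d\theta_1$.
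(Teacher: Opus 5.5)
Your proof is correct, and the sign checks work out. From \eqref{eq:nablaS} one gets $(\nabla_XS)U=2\theta_2(U)X$ and $(\nabla_US)X=-2\theta_1(X)U$. Hence $(\nabla_X\theta_1)(U)=\theta_1(X)\theta_2(U)$ and $d\theta_1(X,U)=2\theta_1(X)\theta_2(U)-(\nabla_U\theta)(X)$. Inserting \cref{cor:mixed-nabla-theta} then gives exactly the decomposition of \cref{prop:Lee-decomp}. Your pure-type argument, using $d_1\theta_1=d_2\theta_2=0$ together with integrability of the $D$-parallel distributions, is also fine and is more explicit than the paper's.

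Your route differs from the paper's. The paper works in a local product chart $g=e^{2f_1}g_1+e^{2f_2}g_2$ and uses $\theta_1=-d_1f_2$, $\theta_2=-d_2f_1$. This gives $d\theta_1(X,U)=X(Uf_2)$, $d\theta_2(X,U)=-X(Uf_1)$ and $(\theta_1\wedge\theta_2)(X,U)=X(f_2)U(f_1)$. The Koszul-derived mixed Ricci formula \eqref{eq:mixed-Ricci-local} then reads
\[
\Ric(X,U)=-\bigl((q-1)d\theta_1-(p-1)d\theta_2-(n-2)\theta_1\wedge\theta_2\bigr)(X,U),
\]
so the hypothesis $\Ric(T_1,T_2)=0$ is the identity itself, and \cref{prop:Lee-decomp} is deduced afterwards.

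You reverse that order. You first obtain the decomposition of $d\theta_1$ intrinsically, and the mixed identity follows as the linear combination in which the $d\theta$ terms cancel. What your version buys: it is chart-free, and it shows that the Ricci hypothesis enters only through the $\tfrac1{n-2}\Ric(X,U)$ term of \cref{cor:mixed-nabla-theta}. What it costs: it rests on the traced curvature system of \cref{lem:PQ-system}, via the symmetric companion \eqref{eq:symmetric-companion}. The paper's proof uses only first-order Koszul computations and so also serves as an independent check on the Section~2 machinery. Citing \cref{cor:mixed-nabla-theta} is legitimate here, since it precedes the proposition and its hypothesis $p,q\ge2$ is the standing assumption of Section~3.
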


\begin{proof}
The statement is local. On a product chart
$\mathcal U=\mathcal U_1\times\mathcal U_2$, write
\begin{equation}
g=e^{2f_1}g_1+e^{2f_2}g_2,
\end{equation}
with $g_i$ a metric on $\mathcal U_i$ and $f_i\in C^\infty(\mathcal U)$.
By \eqref{eq:app-Lee}, the adapted Weyl connection has
\begin{equation}\label{eq:local-Lee}
\theta_1=-d_1f_2,
\qquad
\theta_2=-d_2f_1.
\end{equation}
For $X\in T_1$ and $U\in T_2$,
\[
d\theta_1(X,U)=X(Uf_2),
\qquad
d\theta_2(X,U)=-X(Uf_1),
\]
and
\[
(\theta_1\wedge\theta_2)(X,U)=X(f_2)U(f_1).
\]
The self-contained mixed Ricci calculation in \cref{app:mixed-Ricci} gives
\begin{equation}\label{eq:mixed-Ricci-local}
\Ric_g(X,U)
=
(1-p)X(Uf_1)+(1-q)X(Uf_2)
+(n-2)X(f_2)U(f_1).
\end{equation}
The left-hand side vanishes by hypothesis, and substitution gives
\eqref{eq:mixed-Lee} on mixed pairs.

Finally, $d_1\theta_1=d_2\theta_2=0$, so
$d\theta_1$, $d\theta_2$, and $\theta_1\wedge\theta_2$ are all of pure
mixed bidegree. Hence equality on mixed pairs proves the two-form identity.
\end{proof}

\begin{proposition}[Decomposition of the Lee-form component derivatives]\label{prop:Lee-decomp}
Let $F=d\theta$. Then
\begin{equation}\label{eq:dtheta1-decomp}
\boxed{
d\theta_1
=
\frac{p-1}{n-2}F+\theta_1\wedge\theta_2,
}
\end{equation}
and
\begin{equation}\label{eq:dtheta2-decomp}
\boxed{
d\theta_2
=
\frac{q-1}{n-2}F-\theta_1\wedge\theta_2.
}
\end{equation}
\end{proposition}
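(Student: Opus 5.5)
The plan is to treat \eqref{eq:dtheta1-decomp}--\eqref{eq:dtheta2-decomp} as a linear algebra consequence of two relations among $d\theta_1$, $d\theta_2$ and $\theta_1\wedge\theta_2$. The first relation is trivial:
\[
d\theta_1+d\theta_2=F.
\]
The second is the mixed identity of \cref{prop:mixed-Lee},
\[
(q-1)d\theta_1-(p-1)d\theta_2=(n-2)\theta_1\wedge\theta_2,
\]
which holds under the standing block-diagonal Ricci assumption of this section. Both are identities between two-forms of pure mixed bidegree, since $d_1\theta_1=d_2\theta_2=0$. We may therefore regard them as a $2\times2$ linear system in the unknowns $d\theta_1$ and $d\theta_2$.

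The determinant of this system is $-(p-1)-(q-1)=-(n-2)$, which is nonzero because $n=p+q\ge4$. To solve it, multiply the first relation by $p-1$ and add the second. This gives
\[
(n-2)\,d\theta_1=(p-1)F+(n-2)\theta_1\wedge\theta_2,
\]
which is \eqref{eq:dtheta1-decomp}. Subtracting this from $F$ gives
\[
d\theta_2=\Bigl(1-\frac{p-1}{n-2}\Bigr)F-\theta_1\wedge\theta_2=\frac{q-1}{n-2}F-\theta_1\wedge\theta_2,
\]
which is \eqref{eq:dtheta2-decomp}.

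As a consistency check, the same formulas should follow from \cref{cor:mixed-nabla-theta}. For $X\in T_1$ and $U\in T_2$ one has $d\theta_1(X,U)=(\nabla_X\theta)(U)-(\nabla_U\theta_1)(X)$ up to terms involving $\nabla S$, so this route would reproduce the coefficient $\tfrac{q-1}{n-2}$ etc. However, it requires tracking $\nabla S$ via \eqref{eq:nablaS}, so I would use the linear-algebra route instead.

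There is no real obstacle here, since all the geometric content sits in \cref{prop:mixed-Lee}. The only care needed is in the signs and normalizations of $\theta_1\wedge\theta_2$ evaluated on mixed pairs. These must match the ones used in the proof of \cref{prop:mixed-Lee}, which they do because both propositions use the same two-form conventions.
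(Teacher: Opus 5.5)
Your proposal is correct and is essentially the paper's proof: both combine $F=d\theta_1+d\theta_2$ with the mixed identity of \cref{prop:mixed-Lee} and solve the resulting linear system. The paper eliminates via $d\theta_2=F-d\theta_1$, which is the same computation as yours. The aside via \cref{cor:mixed-nabla-theta} is not needed and, as you wrote it, is too vague to count as a proof, so it is right to leave it out.
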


\begin{proof}
Set $\Phi_1=d\theta_1$, $\Phi_2=d\theta_2$, and $\Xi=\theta_1\wedge\theta_2$.
Since $F=\Phi_1+\Phi_2$, \cref{prop:mixed-Lee} becomes
\[
(q-1)\Phi_1-(p-1)\Phi_2=(n-2)\Xi.
\]
Substituting $\Phi_2=F-\Phi_1$ gives
\[
(n-2)\Phi_1=(p-1)F+(n-2)\Xi,
\]
which is \eqref{eq:dtheta1-decomp}; then \eqref{eq:dtheta2-decomp} follows
from $\Phi_2=F-\Phi_1$.
\end{proof}

\begin{example}[Closed does not imply special]
For the metric in \cref{ex:hyperbolic}, one has $F=d\theta=0$ but
\[
\theta_1\wedge\theta_2
=\frac{dx_1\wedge dy_1}{t^2}\ne0.
\]
Equations \eqref{eq:dtheta1-decomp}--\eqref{eq:dtheta2-decomp} reduce to
\[
d\theta_1=\theta_1\wedge\theta_2,
\qquad
 d\theta_2=-\theta_1\wedge\theta_2,
\]
exactly as obtained by direct differentiation.
\end{example}

\begin{corollary}[Weighted norm identity]
Pointwise on $M$,
\begin{equation}\label{eq:weighted-norm}
\boxed{
\begin{aligned}
&(q-1)|d\theta_1|^2+(p-1)|d\theta_2|^2\\
&\qquad=
\frac{(p-1)(q-1)}{n-2}|d\theta|^2
+(n-2)|\theta_1\wedge\theta_2|^2.
\end{aligned}}
\end{equation}
Consequently, using \cref{cor:energy},
\begin{equation}\label{eq:weighted-Weyl}
\boxed{
\begin{aligned}
&(q-1)|d\theta_1|^2+(p-1)|d\theta_2|^2\\
&\qquad=
\frac{n-2}{32(p-1)(q-1)}|[\KW,S]|_{\HS}^2
+(n-2)|\theta_1\wedge\theta_2|^2.
\end{aligned}}
\end{equation}
\end{corollary}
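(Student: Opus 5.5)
The plan is to obtain the weighted norm identity directly from the decomposition in \cref{prop:Lee-decomp}, by taking pointwise norms. Set $F=d\theta$ and $\Xi=\theta_1\wedge\theta_2$. By \eqref{eq:dtheta1-decomp} and \eqref{eq:dtheta2-decomp},
\[
d\theta_1=\tfrac{p-1}{n-2}F+\Xi,\qquad d\theta_2=\tfrac{q-1}{n-2}F-\Xi .
\]
Expanding the squared norms gives
\[
|d\theta_1|^2=\tfrac{(p-1)^2}{(n-2)^2}|F|^2+\tfrac{2(p-1)}{n-2}\langle F,\Xi\rangle+|\Xi|^2,
\]
\[
|d\theta_2|^2=\tfrac{(q-1)^2}{(n-2)^2}|F|^2-\tfrac{2(q-1)}{n-2}\langle F,\Xi\rangle+|\Xi|^2 .
\]

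Next I form the weighted combination $(q-1)|d\theta_1|^2+(p-1)|d\theta_2|^2$. This choice of weights is exactly what kills the cross terms: the coefficient of $\langle F,\Xi\rangle$ is $\tfrac{2}{n-2}\bigl((q-1)(p-1)-(p-1)(q-1)\bigr)=0$.

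The remaining coefficients simplify using $(p-1)+(q-1)=n-2$. The coefficient of $|F|^2$ is
\[
\frac{(p-1)(q-1)\bigl((p-1)+(q-1)\bigr)}{(n-2)^2}=\frac{(p-1)(q-1)}{n-2},
\]
and the coefficient of $|\Xi|^2$ is $(q-1)+(p-1)=n-2$. This yields \eqref{eq:weighted-norm}. Here $\langle\cdot,\cdot\rangle$ is the inner product on two-forms compatible with the fixed convention $|\omega|^2=\sum_{i<j}\omega(e_i,e_j)^2$; only bilinearity of that inner product is used.

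For \eqref{eq:weighted-Weyl}, I substitute \cref{cor:energy}, namely $|d\theta|^2=\frac{(n-2)^2}{32(p-1)^2(q-1)^2}|[\KW,S]|_{\HS}^2$, into the first term. Multiplying by $\frac{(p-1)(q-1)}{n-2}$ gives the coefficient $\frac{n-2}{32(p-1)(q-1)}$, as claimed.

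There is no real obstacle. The only points to check are that the cross term cancels, which is guaranteed by the choice of weights, and that the norm convention is used consistently between \cref{cor:energy} and the present identity. The standing hypothesis $\Ric(T_1,T_2)=0$ enters only through \cref{prop:Lee-decomp}.
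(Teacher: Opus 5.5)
Your proposal is correct and follows essentially the same route as the paper. Both arguments insert the decomposition of \cref{prop:Lee-decomp}, note that the weights $(q-1)$ and $(p-1)$ cancel the cross term, and simplify the remaining coefficients using $(p-1)+(q-1)=n-2$ before substituting \cref{cor:energy}.
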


\begin{proof}
Put
\[
a=\frac{p-1}{n-2},
\qquad
b=\frac{q-1}{n-2},
\qquad a+b=1,
\]
and $\Xi=\theta_1\wedge\theta_2$. By \cref{prop:Lee-decomp},
\[
d\theta_1=aF+\Xi,
\qquad
 d\theta_2=bF-\Xi.
\]
Expanding the weighted sum, the cross term has coefficient
\[
2\bigl((q-1)a-(p-1)b\bigr)=0.
\]
Moreover,
\[
(q-1)a^2+(p-1)b^2
=
\frac{(p-1)(q-1)}{n-2},
\]
while $(q-1)+(p-1)=n-2$. This proves \eqref{eq:weighted-norm}. Formula \eqref{eq:weighted-Weyl} follows from \eqref{eq:energy}.
\end{proof}

\begin{proposition}[A differential consequence]\label{prop:closed-wedge}
The two-form $\theta_1\wedge\theta_2$ is closed. Moreover,
\begin{equation}
\theta_1\wedge d\theta=0,
\qquad
 d\theta\wedge\theta_2=0.
\end{equation}
\end{proposition}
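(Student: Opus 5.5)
The plan is to read both claims off \cref{prop:Lee-decomp} by applying $d$ and wedging with the Lee-form components.

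\textbf{Closedness of $\theta_1\wedge\theta_2$.} Apply $d$ to \eqref{eq:dtheta1-decomp}. Since $d^2=0$ and $dF=d(d\theta)=0$, we get
\[
0=d(d\theta_1)=\frac{p-1}{n-2}\,dF+d(\theta_1\wedge\theta_2)=d(\theta_1\wedge\theta_2).
\]
So $\theta_1\wedge\theta_2$ is closed, and \eqref{eq:dtheta2-decomp} gives the same conclusion consistently.

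\textbf{The identities $\theta_1\wedge d\theta=0$ and $d\theta\wedge\theta_2=0$.} Expand the exterior derivative:
\[
d(\theta_1\wedge\theta_2)=d\theta_1\wedge\theta_2-\theta_1\wedge d\theta_2 .
\]
Substitute the decompositions of \cref{prop:Lee-decomp}. The terms $(\theta_1\wedge\theta_2)\wedge\theta_2$ and $\theta_1\wedge(\theta_1\wedge\theta_2)$ vanish because each contains a repeated one-form. This leaves
\[
0=\frac{p-1}{n-2}\,F\wedge\theta_2-\frac{q-1}{n-2}\,\theta_1\wedge F .
\]
This single relation does not separate the two claimed identities, so a bidegree decomposition is needed.

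Recall that $F$, $\theta_1\wedge\theta_2$, $d\theta_1$ and $d\theta_2$ are all of pure type $(1,1)$ with respect to $T^*M=T_1^*\oplus T_2^*$; this was established in the proof of \cref{prop:mixed-Lee} using $d_1\theta_1=d_2\theta_2=0$. Therefore:
\begin{itemize}
\item $F\wedge\theta_2$ has bidegree $(1,2)$;
\item $\theta_1\wedge F$ has bidegree $(2,1)$.
\end{itemize}
These two components are independent, so each must vanish separately. The coefficients $(p-1)/(n-2)$ and $(q-1)/(n-2)$ are nonzero because $p,q\ge2$. Hence $F\wedge\theta_2=0$ and $\theta_1\wedge F=0$, which are exactly the stated identities $d\theta\wedge\theta_2=0$ and $\theta_1\wedge d\theta=0$.

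\textbf{Main obstacle.} The only delicate point is the bidegree bookkeeping: one must confirm that $F$ has no $(2,0)$ or $(0,2)$ part, so that the two wedge products really lie in different bidegrees. This follows from $d_1\theta_1=d_2\theta_2=0$, which was already used earlier. Everything else is formal.
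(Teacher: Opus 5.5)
Your proof is correct and follows essentially the same route as the paper: the paper applies $d$ to \eqref{eq:mixed-Lee} rather than to \eqref{eq:dtheta1-decomp}, which is equivalent, and then separates $\frac{p-1}{n-2}\,d\theta\wedge\theta_2-\frac{q-1}{n-2}\,\theta_1\wedge d\theta=0$ by bidegree exactly as you do. Your remark that the coefficients are nonzero because $p,q\ge2$ makes explicit a step the paper leaves implicit.
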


\begin{proof}
Taking $d$ in \eqref{eq:mixed-Lee} gives
\[
0=(n-2)d(\theta_1\wedge\theta_2),
\]
so $d(\theta_1\wedge\theta_2)=0$. Using \cref{prop:Lee-decomp},
\begin{align*}
0
&=d(\theta_1\wedge\theta_2)\\
&=d\theta_1\wedge\theta_2-\theta_1\wedge d\theta_2\\
&=
\frac{p-1}{n-2}d\theta\wedge\theta_2
-
\frac{q-1}{n-2}\theta_1\wedge d\theta.
\end{align*}
The first term has bidegree $(1,2)$ and the second bidegree $(2,1)$ with
respect to $T_1^*\oplus T_2^*$, so they lie in distinct summands of
$\Lambda^3T^*M$; hence both vanish.
\end{proof}

\begin{proposition}[Reduction where both components of the Lee form are nonzero]
Let
\[
\Omega=\{x\in M:\theta_1(x)\ne0,\ \theta_2(x)\ne0\}.
\]
There exists a smooth function $\varphi\in C^\infty(\Omega)$ such that
\begin{equation}\label{eq:F-phi}
\boxed{
d\theta=\varphi\,\theta_1\wedge\theta_2.
}
\end{equation}
Furthermore,
\begin{equation}\label{eq:dphi-span}
d\varphi\in\operatorname{span}\{\theta_1,\theta_2\}
\qquad\text{on }\Omega.
\end{equation}
\end{proposition}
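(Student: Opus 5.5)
The proof has two parts: first write $d\theta$ as a multiple of $\theta_1\wedge\theta_2$, then use the closedness of $\theta_1\wedge\theta_2$ to locate $d\varphi$.

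\emph{Existence of $\varphi$.} Apply \cref{prop:closed-wedge} on $\Omega$. Since $d\theta$ is of mixed type, at each point we may write $d\theta=\sum_{a,\alpha}c_{a\alpha}\,e^a\wedge f^\alpha$ in an adapted coframe. The condition $\theta_1\wedge d\theta=0$ lies in $\Lambda^2T_1^*\otimes T_2^*$. Viewing $d\theta$ as an element of $T_1^*\otimes T_2^*$, it says that the $T_1^*$-factor of $d\theta$ is proportional to $\theta_1$, so $d\theta=\theta_1\wedge\beta$ for some $\beta\in T_2^*$. Here we use $\theta_1\ne0$: a tensor $\omega\in T_1^*\otimes T_2^*$ with $\theta_1\wedge\omega=0$ in $\Lambda^2T_1^*\otimes T_2^*$ has the form $\theta_1\otimes\beta$.

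Similarly, $d\theta\wedge\theta_2=0$ together with $\theta_2\ne0$ gives $d\theta=\alpha\wedge\theta_2$ with $\alpha\in T_1^*$. Comparing the two, $\theta_1\otimes\beta=\alpha\otimes\theta_2$ is a rank-$\le1$ tensor. Since $\theta_1,\theta_2\ne0$, it follows that $\beta=\varphi\theta_2$ for a unique scalar $\varphi$. Concretely,
\[
\varphi=\frac{d\theta(\theta_1^\sharp,\theta_2^\sharp)}{|\theta_1|^2|\theta_2|^2}.
\]
This explicit formula shows that $\varphi$ is smooth on $\Omega$.

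\emph{The span condition.} Differentiate $d\theta=\varphi\,\theta_1\wedge\theta_2$. Using $d(d\theta)=0$ and $d(\theta_1\wedge\theta_2)=0$ from \cref{prop:closed-wedge}, we get
\[
d\varphi\wedge\theta_1\wedge\theta_2=0.
\]
Because $\theta_1$ and $\theta_2$ lie in complementary subspaces and are nonzero on $\Omega$, they are linearly independent there. The vanishing of $d\varphi\wedge\theta_1\wedge\theta_2$ then forces $d\varphi\in\operatorname{span}\{\theta_1,\theta_2\}$ by the standard Cartan-type lemma. This is \eqref{eq:dphi-span}.

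The only delicate point is the linear-algebra step that turns $\theta_1\wedge d\theta=0$ into a factorization of $d\theta$. I would verify it by contracting with a vector $X\in T_1$ satisfying $\theta_1(X)=1$. This gives $d\theta=\theta_1\wedge\iota_X d\theta$ modulo terms killed by the identity, and bidegree bookkeeping shows that $\iota_Xd\theta\in T_2^*$.
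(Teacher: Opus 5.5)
Your proof is correct and follows the paper's argument essentially step by step. You factor $d\theta$ using the two wedge identities of \cref{prop:closed-wedge} and get smoothness from an explicit quotient; your $d\theta(\theta_1^\sharp,\theta_2^\sharp)/(|\theta_1|^2|\theta_2|^2)$ coincides with the paper's $\langle d\theta,\theta_1\wedge\theta_2\rangle/|\theta_1\wedge\theta_2|^2$ because $\theta_1^\sharp\perp\theta_2^\sharp$. You then differentiate using $d(\theta_1\wedge\theta_2)=0$ and linear independence, as the paper does. One small correction: your contraction check is exact rather than ``modulo'' anything, since $\iota_X(\theta_1\wedge d\theta)=d\theta-\theta_1\wedge\iota_Xd\theta$ when $\theta_1(X)=1$.
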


\begin{proof}
At each point of $\Omega$, the nonzero one-forms $\theta_1\in T_1^*$ and $\theta_2\in T_2^*$ lie in complementary summands, hence $\theta_1\wedge\theta_2\ne0$. Also $F=d\theta$ belongs to $T_1^*\wedge T_2^*$. The identity $\theta_1\wedge F=0$ from \cref{prop:closed-wedge} forces the $T_1^*$ component of $F$ to be proportional to $\theta_1$. Hence $F=\theta_1\wedge\eta$ for some $\eta\in T_2^*$. The identity $F\wedge\theta_2=0$ then forces $\eta$ to be proportional to $\theta_2$, yielding \eqref{eq:F-phi}. Moreover, on $\Omega$ one may write
\[
\varphi=\frac{\langle d\theta,\theta_1\wedge\theta_2\rangle}{|\theta_1\wedge\theta_2|^2},
\]
so $\varphi$ is smooth.

Since $F$ and $\theta_1\wedge\theta_2$ are closed,
\[
0=dF=d\varphi\wedge\theta_1\wedge\theta_2.
\]
Because $\theta_1$ and $\theta_2$ are linearly independent on $\Omega$, this is equivalent to \eqref{eq:dphi-span}.
\end{proof}

\begin{corollary}[Characterization of specialness on $\Omega$]
On $\Omega$,
\begin{align}
d\theta_1
&=
\left(1+\frac{p-1}{n-2}\varphi\right)
\theta_1\wedge\theta_2,
\\
d\theta_2
&=
\left(-1+\frac{q-1}{n-2}\varphi\right)
\theta_1\wedge\theta_2.
\end{align}
Thus
\begin{equation}\label{eq:special-thresholds}
 d\theta_1=0
\iff
\varphi=-\frac{n-2}{p-1},
\qquad
 d\theta_2=0
\iff
\varphi=\frac{n-2}{q-1}.
\end{equation}
\end{corollary}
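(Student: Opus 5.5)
The plan is to substitute the representation \eqref{eq:F-phi} of $F=d\theta$ on $\Omega$ into the decomposition formulas of \cref{prop:Lee-decomp}. Both formulas for $d\theta_1$ and $d\theta_2$ then become multiples of $\theta_1\wedge\theta_2$. Since $\theta_1\wedge\theta_2$ does not vanish on $\Omega$, the vanishing of each $d\theta_i$ reduces to the vanishing of a scalar coefficient.

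First, write $F=\varphi\,\theta_1\wedge\theta_2$ on $\Omega$ and insert this into \eqref{eq:dtheta1-decomp}:
\[
d\theta_1=\frac{p-1}{n-2}\varphi\,\theta_1\wedge\theta_2+\theta_1\wedge\theta_2
=\Bigl(1+\frac{p-1}{n-2}\varphi\Bigr)\theta_1\wedge\theta_2 .
\]
Similarly, \eqref{eq:dtheta2-decomp} gives
\[
d\theta_2=\Bigl(-1+\frac{q-1}{n-2}\varphi\Bigr)\theta_1\wedge\theta_2 .
\]

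Next, recall from the proof of the preceding proposition that at each point of $\Omega$ the forms $\theta_1\in T_1^*$ and $\theta_2\in T_2^*$ are nonzero and lie in complementary summands. They are therefore linearly independent, so $\theta_1\wedge\theta_2\neq0$. Hence $d\theta_1=0$ at a point if and only if $1+\frac{p-1}{n-2}\varphi=0$ there. Because $p\ge2$, this can be solved as $\varphi=-\frac{n-2}{p-1}$. Likewise, since $q\ge2$, $d\theta_2=0$ if and only if $\varphi=\frac{n-2}{q-1}$. This gives \eqref{eq:special-thresholds} pointwise on $\Omega$, and hence as identities on $\Omega$.

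There is no real obstacle. The only points needing care are the nonvanishing of $\theta_1\wedge\theta_2$ on $\Omega$ and the positivity of $p-1$ and $q-1$, which allows dividing by them. Both facts come directly from the definition of $\Omega$ and the standing rank assumptions $p,q\ge2$.
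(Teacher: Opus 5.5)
Your proposal is correct and follows the paper's proof: substitute $d\theta=\varphi\,\theta_1\wedge\theta_2$ into \eqref{eq:dtheta1-decomp}--\eqref{eq:dtheta2-decomp}, then use $\theta_1\wedge\theta_2\neq0$ on $\Omega$ to read off the thresholds. Your remarks that $p-1,q-1>0$ are needed to solve for $\varphi$, and that \cref{prop:Lee-decomp} relies on the standing block-diagonal Ricci assumption of Section~3, are appropriate details that the paper leaves implicit.
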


\begin{proof}
Substitute \eqref{eq:F-phi} in \eqref{eq:dtheta1-decomp}--\eqref{eq:dtheta2-decomp}. Since $\theta_1\wedge\theta_2\ne0$ on $\Omega$, the threshold characterization follows.
\end{proof}

\begin{corollary}[The components of the Lee form cannot both be closed on $\Omega$]
On $\Omega$, the two components of the Lee form cannot both be closed.
\end{corollary}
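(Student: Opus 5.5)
The claim follows directly from the threshold characterization in the preceding corollary, so I would argue by contradiction. Suppose that at some point $x\in\Omega$ both $d\theta_1$ and $d\theta_2$ vanish. Throughout, the standing assumptions of this section are in force: $\Ric(T_1,T_2)=0$ and $p,q\ge2$. Under these assumptions the function $\varphi$ of \eqref{eq:F-phi} is defined and smooth on $\Omega$.

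By \eqref{eq:special-thresholds}, the vanishing of $d\theta_1$ at $x$ forces $\varphi(x)=-\frac{n-2}{p-1}$, and the vanishing of $d\theta_2$ at $x$ forces $\varphi(x)=\frac{n-2}{q-1}$. Since $n\ge4$ and $p,q\ge2$, the first value is strictly negative and the second is strictly positive, so they cannot coincide. This gives the contradiction.

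The same conclusion can be reached without $\varphi$, which is a useful check. Adding the two formulas of \cref{prop:Lee-decomp} shows that if both $d\theta_1$ and $d\theta_2$ vanish, then $F=d\theta=0$. Substituting $F=0$ back into \eqref{eq:dtheta1-decomp} gives $\theta_1\wedge\theta_2=0$. But on $\Omega$ the forms $\theta_1$ and $\theta_2$ are nonzero and lie in the complementary summands $T_1^*$ and $T_2^*$, so $\theta_1\wedge\theta_2\ne0$ there. This is again a contradiction.

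The statement should be read pointwise: at no point of $\Omega$ do both $d\theta_1$ and $d\theta_2$ vanish. In particular, they cannot both vanish identically on any nonempty open subset of $\Omega$. The argument has no real obstacle; the only point needing care is that the thresholds in \eqref{eq:special-thresholds} are finite and distinct, and this is exactly where the hypothesis $p,q\ge2$ enters.
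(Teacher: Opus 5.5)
Your proposal is correct and its main argument is the paper's: the two thresholds in \eqref{eq:special-thresholds}, $-\frac{n-2}{p-1}<0<\frac{n-2}{q-1}$, cannot both be attained by $\varphi$. Your $\varphi$-free check, which sums the two formulas of \cref{prop:Lee-decomp} to get $F=0$ and hence $\theta_1\wedge\theta_2=0$, which is impossible on $\Omega$, is also valid pointwise.
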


\begin{proof}
The two threshold values in \eqref{eq:special-thresholds} have opposite signs:
\[
-\frac{n-2}{p-1}<0<
\frac{n-2}{q-1}.
\]
Hence a single value of $\varphi$ cannot satisfy both conditions.
\end{proof}

\begin{corollary}[Rank restriction on the off-diagonal Weyl contraction]\label{cor:rank-one}
On $\Omega$, let $B_W:T_1\to T_2$ be the off-diagonal block of $\KW$. Then
\begin{equation}\label{eq:BW-rank-one}
B_W(X)
=
\frac{2(p-1)(q-1)}{n-2}
\varphi\,\theta_1(X)\theta_2^\sharp.
\end{equation}
In particular,
\[
\rk B_W\le1.
\]
\end{corollary}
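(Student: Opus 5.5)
The plan is to read off the off-diagonal block of $\KW$ from the main formula \eqref{eq:main-obstruction}, and then insert the factorization \eqref{eq:F-phi} of $d\theta$ that holds on $\Omega$.

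\emph{Step 1: express $B_W$ through $d\theta$.} I would first note what $B_W$ means. Since $\KW$ is self-adjoint, its off-diagonal block $B_W:T_1\to T_2$ is $X\mapsto(\KW X)_{T_2}$. The proof of \cref{cor:mixed-trace} shows that for $X\in T_1$,
\[
[\KW,S]X=2(\KW X)_{T_2}=2B_W(X).
\]
Then \cref{thm:main-obstruction} gives
\[
(d\theta)^\sharp X=\frac{n-2}{2(p-1)(q-1)}B_W(X).
\]
Inverting this,
\[
B_W(X)=\frac{2(p-1)(q-1)}{n-2}\,(d\theta)^\sharp X .
\]
This step is valid for $X\in T_1$ at every point of $M$, not only on $\Omega$. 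Here one uses that $(d\theta)^\sharp$ maps $T_1$ into $T_2$, because $d\theta$ is of mixed type.

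\emph{Step 2: substitute the factorization on $\Omega$.} On $\Omega$, \eqref{eq:F-phi} gives $(d\theta)^\sharp=\varphi\,(\theta_1\wedge\theta_2)^\sharp$. I would compute this endomorphism with the convention \eqref{eq:sharp-convention}. For $X\in T_1$ and $Y$ arbitrary,
\[
g\bigl((\theta_1\wedge\theta_2)^\sharp X,Y\bigr)=(\theta_1\wedge\theta_2)(X,Y)=\theta_1(X)\theta_2(Y)-\theta_2(X)\theta_1(Y).
\]
The term $\theta_2(X)$ vanishes because $X\in T_1$, so the right-hand side is $\theta_1(X)\theta_2(Y)$. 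Hence $(\theta_1\wedge\theta_2)^\sharp X=\theta_1(X)\theta_2^\sharp$. Substituting into Step 1 yields exactly \eqref{eq:BW-rank-one}.

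\emph{Step 3: the rank bound.} The image of $B_W$ is contained in $\operatorname{span}\{\theta_2^\sharp\}$, so $\rk B_W\le1$.

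\emph{Main obstacle.} The only delicate point is sign and normalization bookkeeping. Two things must be checked against the conventions \eqref{eq:sharp-convention}: the factor $2$ relating $[\KW,S]$ to $B_W$, and the orientation $g(\omega^\sharp X,Y)=\omega(X,Y)$. I would confirm the overall sign against the scalar identity in \cref{cor:mixed-trace}, namely $d\theta(X,U)=\frac{n-2}{2(p-1)(q-1)}g(\KW X,U)$, which was already derived there. Combining it with $d\theta(X,U)=\varphi\,\theta_1(X)\theta_2(U)$ gives the same formula componentwise and settles the signs.
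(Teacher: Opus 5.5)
Your proposal is correct and follows the paper's own proof: you use $[\KW,S]X=2B_WX$ for $X\in T_1$ together with \cref{thm:main-obstruction}, and then substitute $(d\theta)^\sharp X=\varphi\,\theta_1(X)\theta_2^\sharp$ from \eqref{eq:F-phi}. Your explicit evaluation of $(\theta_1\wedge\theta_2)^\sharp X$ under the convention \eqref{eq:sharp-convention} supplies the one step the paper leaves implicit.
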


\begin{proof}
For $X\in T_1$, $[\KW,S]X=2B_WX$. Hence \cref{thm:main-obstruction} gives
\[
(d\theta)^\sharp X
=
\frac{n-2}{2(p-1)(q-1)}B_WX.
\]
On the other hand, \eqref{eq:F-phi} yields
\[
(d\theta)^\sharp X
=
\varphi\,\theta_1(X)\theta_2^\sharp.
\]
Comparison proves \eqref{eq:BW-rank-one}.
\end{proof}

\begin{remark}
The results of this section do not settle whether the Einstein condition forces a conformal product to be special. They nevertheless separate two mechanisms: the Faraday term $d\theta$, controlled by the Weyl commutator through \cref{thm:main-obstruction}, and the interaction term $\theta_1\wedge\theta_2$. The local hyperbolic model shows that the first may vanish while the second does not.
\end{remark}

\section{Four-dimensional closedness and compact rigidity}

We now assume
\[
n=4,
\qquad
p=q=2.
\]
The first result is local and does not require compactness.

\begin{proposition}[The induced ambi-Hermitian structure]\label{prop:ambihermitian}
Locally, the rank-$(2,2)$ conformal product determines two orthogonal integrable complex structures $J_+$ and $J_-$ inducing opposite orientations and satisfying
\[
DJ_+=DJ_-=0,
\qquad
S=-J_+J_-.
\]
\end{proposition}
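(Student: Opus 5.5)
Fix a point and an oriented $D$-parallel orthonormal frame adapted to the splitting, $e_1,e_2\in T_1$ and $e_3,e_4\in T_2$. On each rank-two summand, orthogonality and orientation give a rotation by $\pi/2$: let $J_1$ be the rotation on $T_1$ (so $J_1e_1=e_2$) and $J_2$ the rotation on $T_2$ (so $J_2e_3=e_4$). Orienting each summand requires orientability of $T_1$ and $T_2$, which holds locally; this is why the proposition is local. Set
\[
J_\pm:=J_1\oplus(\pm J_2).
\]
These are orthogonal almost complex structures. With $M$ oriented by $e_1,e_2,e_3,e_4$, the form $J_+$ has Kähler form $e^{12}+e^{34}$, which is self-dual, and $J_-$ has Kähler form $e^{12}-e^{34}$, which is anti-self-dual. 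So $J_+$ and $J_-$ induce opposite orientations. They commute, and
\[
J_+J_-=J_1^2\oplus(-J_2^2)=-\Id\oplus\Id=-S,
\]
which gives $S=-J_+J_-$.

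Next we show $D$-parallelism. Since $D$ preserves $T_1$ and $T_2$ and is conformal, $D$ restricted to $T_1$ is a connection preserving the conformal structure of an oriented rank-two bundle. Concretely, write $D_X e_1=a\,e_1+b\,e_2$ and $D_Xe_2=c\,e_1+d\,e_2$. From $Dg=-2\theta\otimes g$ we get $a=d=-\theta(X)$ and $b=-c$. Hence $D_X(J_1e_1)=J_1D_Xe_1$, and similarly for $e_2$. The key point is that the conformal Lie algebra $\mathbb R\oplus\mathfrak{so}(2)$ is abelian, so it commutes with $J_1$. Equivalently, $J_1$ is determined by $g|_{T_1}$ up to scale and by the orientation, both of which are $D$-parallel. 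Thus $DJ_1=0$, likewise $DJ_2=0$, and since $D$ preserves the splitting, $DJ_\pm=0$.

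For integrability, $D$ is torsion-free and $DJ_\pm=0$, so
\[
N_{J}(X,Y)=[JX,JY]-J[JX,Y]-J[X,JY]-[X,Y]
\]
can be computed with $D$ in place of brackets: $[X,Y]=D_XY-D_YX$. Substituting and using $D_X(JY)=JD_XY$, all terms cancel and $N_{J_\pm}=0$. By Newlander--Nirenberg both structures are integrable. Alternatively, in a product chart the structures are the product of the conformal complex structures of the two surface factors (with sign), so integrability is immediate.

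The only delicate step is the choice of orientations and the sign bookkeeping in $S=-J_+J_-$ and in the self-dual/anti-self-dual assignment. The parallelism argument rests on the commutativity of the rank-two conformal group.
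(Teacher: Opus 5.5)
Your proposal is correct and follows essentially the same route as the paper: orient the two planes locally, take the rotations by $\pi/2$, set $J_\pm=J_1\oplus(\pm J_2)$, obtain $DJ_\pm=0$ because $D$ preserves each plane together with its conformal structure, and obtain integrability from torsion-freeness. Your explicit frame computation and the self-dual/anti-self-dual check of $e^{12}\pm e^{34}$ spell out steps the paper states in one line each.

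There are two harmless slips. First, from $Dg=-2\theta\otimes g$ one gets $a=d=+\theta(X)$, not $-\theta(X)$; this does not matter because only $a=d$ and $b=-c$ are used. Second, the frame should be a local orthonormal frame adapted to the $D$-parallel splitting rather than a $D$-parallel one, since a $D$-parallel orthonormal frame would force $\theta=0$.
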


\begin{proof}
Work on a local orientation cover, if necessary, and orient both plane bundles
$T_1$ and $T_2$. Let $I_i$ be rotation by $\pi/2$ on the oriented conformal
plane bundle $T_i$. Since $D$ preserves the splitting and the conformal
structures of the two plane bundles, it preserves $I_1$ and $I_2$. Set
\[
J_+=I_1\oplus I_2,
\qquad
J_-=I_1\oplus(-I_2).
\]
Then $J_\pm^2=-\Id$, both are orthogonal, and $DJ_\pm=0$. Moreover, on $T_1$ the product $J_+J_-$ equals $-\Id$, while on $T_2$ it equals $\Id$; hence $S=-J_+J_-$. This is the eigenspace decomposition associated with a commuting ambi-Hermitian pair; compare \cite[Definition~1 and Lemma~1]{ApostolovCalderbankGauduchon2016}. A torsion-free
connection preserving an almost-complex structure forces its Nijenhuis tensor
to vanish, so $J_\pm$ are integrable. Reversing the complex orientation on
exactly one of the two oriented planes reverses the induced four-dimensional
orientation; hence $J_+$ and $J_-$ have opposite orientations.

If $\omega_\pm=g(J_\pm\cdot,\cdot)$, the standard Weyl calculation gives
\[
d\omega_\pm=-2\theta\wedge\omega_\pm;
\]
see \cite[Lemma~5.7]{BelgunMoroianu2011}. Thus the same adapted Weyl
connection is compatible with both Hermitian structures. In the convention of \cite{ApostolovCalderbankGauduchon2016}, where $d\omega_\pm=-2\theta^g_\pm\wedge\omega_\pm$, this says precisely that
\[
\theta^g_+=\theta^g_-=\theta.
\]
Accordingly, the rank-$(2,2)$ conformal-product structure gives, locally (or on the orientation cover), an ambi-Hermitian structure whose two canonical Weyl connections coincide.
\end{proof}

\begin{lemma}[Ambi-Hermitian Ricci invariance]\label{lem:ambi-Ricci}
Let $\mathcal A=\Rish$ for a rank-$(2,2)$ conformal product. The following are equivalent:
\begin{enumerate}
\item $\Ric$ is invariant under both $J_+$ and $J_-$;
\item $\mathcal A$ commutes with both $J_+$ and $J_-$;
\item there exist smooth functions $\mu_1,\mu_2$ such that
\begin{equation}\label{eq:two-Ricci-eigenvalues}
\mathcal A=\mu_1\Id_{T_1}\oplus\mu_2\Id_{T_2},
\qquad\text{equivalently}\qquad
\Ric=\mu_1 g|_{T_1}\oplus\mu_2 g|_{T_2}.
\end{equation}
\end{enumerate}
In particular, any of these conditions implies $\Ric(T_1,T_2)=0$.
\end{lemma}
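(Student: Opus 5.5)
The equivalence of (1) and (2) is pure linear algebra, and (2) $\Leftrightarrow$ (3) rests on an explicit description of the commutant of $\{J_+,J_-\}$, which I would compute in an adapted frame.

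For (1) $\Leftrightarrow$ (2), I would use that $J_\pm$ are orthogonal, so $J_\pm^t=-J_\pm=J_\pm^{-1}$. Invariance $\Ric(J_\pm X,J_\pm Y)=\Ric(X,Y)$ then reads $J_\pm^t\mathcal A J_\pm=\mathcal A$. Equivalently $J_\pm^{-1}\mathcal A J_\pm=\mathcal A$, which is exactly $[\mathcal A,J_\pm]=0$. This holds pointwise for each sign separately, and the local orientation cover used in \cref{prop:ambihermitian} causes no trouble, since these are pointwise statements.

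For (2) $\Rightarrow$ (3), suppose first that $\mathcal A$ commutes with both $J_\pm$. Then it commutes with $S=-J_+J_-$, so it preserves $T_1$ and $T_2$ and splits as $\mathcal A=\mathcal A_1\oplus\mathcal A_2$. From the construction $J_+=I_1\oplus I_2$ and $J_-=I_1\oplus(-I_2)$, we get
\[
\tfrac12(J_++J_-)=I_1\oplus 0,
\qquad
\tfrac12(J_+-J_-)=0\oplus I_2.
\]
Hence $\mathcal A_i$ commutes with $I_i$ on the oriented plane $T_i$. Each $\mathcal A_i$ is self-adjoint on a $2$-plane and commutes with the rotation by $\pi/2$. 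In an oriented orthonormal basis such an endomorphism has the form $\begin{pmatrix}a&-b\\ b&a\end{pmatrix}$, and self-adjointness forces $b=0$. Thus $\mathcal A_i=\mu_i\Id_{T_i}$ with $\mu_i=\tfrac12\tr(\mathcal A|_{T_i})=\tfrac12\tr\bigl(\tfrac12(\Id\pm S)\mathcal A\bigr)$. This trace formula shows that $\mu_1,\mu_2$ are smooth and independent of the chosen orientations, so they descend from the orientation cover. The equivalent form $\Ric=\mu_1g|_{T_1}\oplus\mu_2g|_{T_2}$ then follows from $g(\mathcal AX,Y)=\Ric(X,Y)$.

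For (3) $\Rightarrow$ (2), I would note that $\mu_1\Id_{T_1}\oplus\mu_2\Id_{T_2}$ commutes with any endomorphism preserving $T_1$ and $T_2$, and $J_\pm$ are such endomorphisms. The final claim is immediate from (3), because $\mathcal A$ preserves the orthogonal summands, so $\Ric(X,U)=g(\mathcal AX,U)=0$ for $X\in T_1$, $U\in T_2$.

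I do not expect a real obstacle. The only step needing care is the one using both structures: $J_+$ alone does not force $\mathcal A$ to commute with $S$. The clean route is to recover $I_1\oplus0$ and $0\oplus I_2$ as the half-sum and half-difference of $J_\pm$, together with $S=-J_+J_-$.
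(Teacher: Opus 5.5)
Your proposal is correct and follows the paper's proof essentially step for step: orthogonality of $J_\pm$ gives (1)$\Leftrightarrow$(2), commuting with $S=-J_+J_-$ makes $\mathcal A$ preserve $T_1$ and $T_2$, and a self-adjoint endomorphism of a Euclidean plane commuting with the rotation $I_i$ is a scalar multiple of the identity. Your additional points are details the paper leaves implicit: the smoothness of $\mu_i$ via the trace formula, their independence of orientation and descent from the orientation cover, and recovering $I_1\oplus 0$ and $0\oplus I_2$ from $\tfrac12(J_+\pm J_-)$.
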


\begin{proof}
For an orthogonal complex structure $J$, invariance of the symmetric tensor $\Ric$ is equivalent to $\mathcal A J=J\mathcal A$. If $\mathcal A$ commutes with $J_+$ and $J_-$, then it commutes with $S=-J_+J_-$ and therefore preserves $T_1$ and $T_2$. Its restriction to each oriented two-plane commutes with the rotation $I_i$ by $\pi/2$. A self-adjoint endomorphism of a Euclidean two-plane commuting with that rotation is a scalar multiple of the identity. This gives \eqref{eq:two-Ricci-eigenvalues}. The converse is immediate because $J_\pm$ preserve each summand and act there by $\pm I_i$. The final assertion follows from the block form.
\end{proof}

\begin{example}[Product of oriented surfaces]
For a product of oriented Riemannian surfaces $(\Sigma_1,g_1)\times(\Sigma_2,g_2)$, the two structures $J_\pm$ in \cref{prop:ambihermitian} are the familiar product complex structures $I_1\oplus I_2$ and $I_1\oplus(-I_2)$. If the product splitting is Levi-Civita parallel, both are K\"ahler.
\end{example}

\begin{theorem}[Closedness under ambi-Hermitian Ricci invariance]\label{thm:4d-closed}
Let $(M^4,g)$ carry a rank-$(2,2)$ conformal product structure with adapted Weyl connection $D$. Assume that the Ricci tensor is invariant under both induced Hermitian structures $J_+$ and $J_-$. By \cref{lem:ambi-Ricci}, this is equivalent to
\begin{equation}\label{eq:4d-Ricci-block-scalar}
\Ric=\mu_1 g|_{T_1}\oplus\mu_2 g|_{T_2}
\end{equation}
for smooth functions $\mu_1,\mu_2$. Then
\begin{equation}\label{eq:4d-closed}
\boxed{d\theta=0.}
\end{equation}
\end{theorem}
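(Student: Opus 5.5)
The plan is to reduce \eqref{eq:4d-closed} to the vanishing of the mixed Weyl contraction of \cref{cor:mixed-trace}, and to obtain that vanishing from the Hermitian curvature identities of $J_+$ and $J_-$. For $n=4$, $p=q=2$, \cref{cor:mixed-trace} gives
\[
d\theta(X,U)=2\sum_{a=1}^{2}W(X,e_a,e_a,U),\qquad X\in T_1,\ U\in T_2 .
\]
So it suffices to show that the off-diagonal block $B_W:T_1\to T_2$ of $\KW$ vanishes. Equivalently, by \cref{thm:main-obstruction}, we need $[\KW,S]=0$. The hypothesis \eqref{eq:4d-Ricci-block-scalar} gives $[\Rish,S]=0$, so the Ricci corrections play no role. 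In particular \cref{cor:partial-Ricci} and all of Section~3 are available.

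\textbf{Step 1 (linear algebra in dimension four).} Fix an adapted oriented orthonormal frame $e_1,e_2\in T_1$, $f_1,f_2\in T_2$. Then
\[
\omega_\pm=e^1\wedge e^2\pm f^1\wedge f^2 .
\]
The mixed two-forms $T_1^*\wedge T_2^*$ form a four-dimensional space. It splits as $\omega_+^{\perp}\cap\Lambda^+$ plus $\omega_-^{\perp}\cap\Lambda^-$, each two-dimensional. I would write $g(\KW X,U)$, via $W=W^+\oplus W^-$, as a sum of two pieces. The first pairs $W^+(\omega_+)$ with the self-dual mixed form built from $X,U$. The second pairs $W^-(\omega_-)$ with the anti-self-dual one. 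The point is that $\sum_a e_a\wedge e_a$-type contractions against $S$ produce exactly $\omega_\pm$. The outcome should be
\[
B_W=0\iff W^+(\omega_+)\in\operatorname{span}\{\omega_+\}\ \text{and}\ W^-(\omega_-)\in\operatorname{span}\{\omega_-\}.
\]
In other words, $B_W=0$ exactly when the components of $W^\pm(\omega_\pm)$ orthogonal to $\omega_\pm$ vanish. In parallel, $(d\theta)^\sharp$ anticommutes with $S$. Hence $(d\theta)^+\perp\omega_+$ and $(d\theta)^-\perp\omega_-$.

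\textbf{Step 2 (Hermitian curvature identity).} For a Hermitian surface $(g,J_+)$ with Lee form $\theta^g_+$, the classical Apostolov--Gauduchon identity expresses the $\omega_+$-orthogonal part of $W^+(\omega_+)$. It is given in terms of the $J_+$-anti-invariant part of the Ricci tensor and the $J_+$-anti-invariant part of $\nabla\theta^g_+$ (equivalently $(d\theta^g_+)^+$ after symmetrization). The Ricci term vanishes by \cref{lem:ambi-Ricci}. By \cref{prop:ambihermitian}, $\theta^g_+=\theta$. I would then substitute the symmetric part of $\nabla\theta$ from \cref{cor:mixed-nabla-theta} with $\Ric(X,U)=0$. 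This should turn the identity into a homogeneous linear relation between $(W^+(\omega_+))^{\perp}$ and $(d\theta)^+$. Combined with Step 1 and \cref{cor:mixed-trace}, the relation should force both to vanish: the Weyl formula gives $(d\theta)^+=c\,(W^+(\omega_+))^\perp$, while the Hermitian identity gives $(W^+(\omega_+))^\perp=c'\,(d\theta)^+$ with $cc'\neq1$. The same argument applied to $J_-$ with reversed orientation kills $(d\theta)^-$. Since $d\theta$ is mixed, this yields $d\theta=0$. If the constants turn out to coincide, I would instead quote \cite[Proposition~1 and Section~3.3]{ApostolovCalderbankGauduchon2016} directly. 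That result says an ambi-Hermitian structure with Ricci invariant under both structures, and with both canonical Weyl connections equal to $D$, has closed $\theta$.

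\textbf{Main obstacle.} The delicate point is Step 2, namely getting the normalization and sign conventions of the Hermitian identity to match \eqref{eq:weyl-connection} and \eqref{eq:curvature-convention}. Only a genuine non-degeneracy $cc'\neq1$ makes the argument self-contained; otherwise the Weyl formula is just a consistency check.

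\textbf{Fallback (local computation).} As a backup I would compute in a product chart $g=e^{2f_1}g_1+e^{2f_2}g_2$, using \eqref{eq:local-Lee}. The conditions that the trace-free parts of $\Ric|_{T_1}$ and $\Ric|_{T_2}$ vanish are trace-free Hessian equations: for $f_2$ along $\mathcal U_1$ and for $f_1$ along $\mathcal U_2$. I would differentiate these in the mixed directions. I would then combine the result with \eqref{eq:mixed-Ricci-local} and with the closedness of $\theta_1\wedge\theta_2$ from \cref{prop:closed-wedge}, aiming to show that $\varphi$ in \eqref{eq:F-phi} vanishes on $\Omega$. Off $\Omega$, \eqref{eq:dtheta1-decomp}--\eqref{eq:dtheta2-decomp} would then be handled separately.
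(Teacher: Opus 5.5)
\textbf{Assessment.} Your proposal proves the theorem only through your fallback citation, and that fallback is the paper's proof. The self-contained route in Step~2 fails.

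\textbf{Step~1 is correct.} It is the computation in the remark that follows the theorem in the paper. With $e_3,e_4$ spanning $T_2$, the mixed components $W_{1223},W_{1224},W_{2113},W_{2114}$ vanish if and only if $(\mathcal W^+\omega_+)^\perp=0$ and $(\mathcal W^-\omega_-)^\perp=0$.

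\textbf{Step~2 fails: $cc'=1$ identically.} The two relations you want to combine are the same relation. The pointwise Hermitian identity says that $(\mathcal W^+\omega_+)^\perp$ is a fixed multiple of $J_+(d\theta)_+$; this is the $J(d\theta^g)_+\odot\omega$ term in the ACG formula. It follows from $DJ_+=0$ alone. In rank $(2,2)$, the conditions $DJ_\pm=0$ are equivalent to $DS=0$. Concretely, normalize $\langle\mathcal W e^{ij},e^{kl}\rangle=W_{ijkl}$ and insert your Step~1 decomposition into \cref{cor:mixed-trace}. This gives
\[
\langle d\theta,\eta_2^+\rangle=\langle\mathcal W^+\omega_+,\eta_3^+\rangle,
\qquad
\langle d\theta,\eta_3^+\rangle=-\langle\mathcal W^+\omega_+,\eta_2^+\rangle .
\]
Similarly, $(d\theta)_-$ is determined by $(\mathcal W^-\omega_-)^\perp$ alone. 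So for $n=4$, \cref{thm:main-obstruction} \emph{is} the pair of Hermitian identities for $J_+$ and $J_-$. Substituting one into the other returns a tautology.

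\textbf{The Ricci hypothesis cannot enter pointwise.} Invariance of the symmetric Ricci tensor of $D$ under $J_+$ ties the $J_+$-anti-invariant part of $\Ric$ algebraically to the symmetric part of $\nabla\theta$, not to $(d\theta)_+$. Your phrase ``equivalently $(d\theta^g_+)^+$ after symmetrization'' conflates the two, since symmetrizing kills $d\theta$. What Step~2 actually feeds in is only $\Ric(T_1,T_2)=0$, through \cref{cor:mixed-nabla-theta}. But \cref{ex:nonclosed-block-Ricci} with $p=q=2$ is a rank-$(2,2)$ conformal product with $\Ric(T_1,T_2)=0$ and $d\theta\neq0$. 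So no choice of constants can close the argument. The rest of the hypothesis is that the trace-free parts of $\Ric|_{T_1}$ and $\Ric|_{T_2}$ vanish. These components are invisible to the system of \cref{lem:PQ-system}. They reach $d\theta$ only after differentiation, through the second Bianchi identity; that is the content of the Riemannian Goldberg--Sachs theorem.

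\textbf{Your fallback is the paper's proof.} By Apostolov--Calderbank--Gauduchon, invariance of $\Ric$ under $J_\pm$ makes $d\theta^g_+$ anti-self-dual and $d\theta^g_-$ self-dual, for the orientation of $J_+$. \Cref{prop:ambihermitian} gives $\theta^g_+=\theta^g_-=\theta$, so $d\theta$ is both self-dual and anti-self-dual and therefore vanishes. Note that ACG supply the duality statements; the closedness itself is this last deduction, not something ACG state directly. The paper, like your own caveat, treats the Weyl-contraction picture only as a consistency check.

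\textbf{Your local-coordinate fallback.} It could in principle give an independent proof, because differentiating the trace-free Hessian equations is exactly where the Goldberg--Sachs information lives. As written, however, the decisive step is not carried out: you do not show why these equations force $\varphi=0$ on $\Omega$.
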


\begin{proof}
Use the Lee-form convention of Apostolov--Calderbank--Gauduchon,
\[
d\omega_\pm=-2\theta^g_\pm\wedge\omega_\pm.
\]
By \cref{prop:ambihermitian}, the two canonical Hermitian Weyl connections agree with the adapted Weyl connection $D$, and therefore
\[
\theta^g_+=\theta^g_-=\theta.
\]
For an ambi-Hermitian metric whose Ricci tensor is invariant under both complex structures, Apostolov--Calderbank--Gauduchon show that the two half-Weyl tensors are degenerate and, with the orientation induced by $J_+$ fixed, $d\theta^g_+$ is anti-self-dual while $d\theta^g_-$ is self-dual \cite[Proposition~1 and Section~3.3]{ApostolovCalderbankGauduchon2016}. Hence the same two-form $d\theta$ is both anti-self-dual and self-dual. It follows that $d\theta=0$.
\end{proof}

\begin{remark}[Curvature interpretation and consistency with Section~2]
The preceding proof is the classical ambi-Hermitian argument. We record how the same half-Weyl information appears in the mixed Weyl contraction of Section~2.

For a Hermitian surface, Apostolov--Calderbank--Gauduchon write the self-dual Weyl operator in the form
\[
\mathcal W^+
=\frac18\kappa^g(\omega\otimes\omega)_0
   +J(d\theta^g)_+\odot\omega
\]
(up to the standard identification of symmetric endomorphisms of $\Lambda^+$); see \cite[Section~1.2]{ApostolovCalderbankGauduchon2016}. Normalize the fundamental form by $\widehat\omega=\omega/\sqrt2$ and complete it to an orthonormal basis $(\widehat\omega,\widehat\gamma,\widehat\delta)$ of $\Lambda^+$. Then
\[
(\widehat\omega\otimes\widehat\omega)_0
=\operatorname{diag}\!\left(\frac23,-\frac13,-\frac13\right).
\]
Choosing $\widehat\gamma$ in the direction of the $\omega$-orthogonal part of $J(d\theta^g)_+$ when it is nonzero, and absorbing the fixed normalization constants into $a$ and $c$, where $a$ comes from the $\frac18\kappa^g$ term and $c$ from the mixed term $J(d\theta^g)_+\odot\omega$, the matrix of $\mathcal W^+$ is
\[
\begin{pmatrix}
 2a/3 & c & 0\\
 c & -a/3 & 0\\
 0 & 0 & -a/3
\end{pmatrix}.
\]
Its eigenvalues are $-a/3$ and
\[
\frac{a/3\pm\sqrt{a^2+4c^2}}{2}.
\]
If $\mathcal W^+$ is degenerate, then either the two eigenvalues in the displayed pair coincide, which forces $a=c=0$ and hence $\mathcal W^+=0$, or one of them equals $-a/3$, which forces $c=0$. In either case $c=0$. Thus $(d\theta^g)_+=0$, and the fundamental form spans an eigenline of $\mathcal W^+$. The same statement with the opposite orientation applies to $J_-$. Consequently, for an adapted orthonormal frame $e_1,e_2\in T_1$ and $e_3,e_4\in T_2$,
\[
\omega_+=e^{12}+e^{34}\in\Lambda^+,
\qquad
\omega_-=e^{12}-e^{34}\in\Lambda^-,
\]
and
\[
\mathcal W^+\omega_+\in\mathbb R\,\omega_+,
\qquad
\mathcal W^-\omega_-\in\mathbb R\,\omega_-.
\]
Let $\mathcal W$ denote the Weyl curvature operator on $\Lambda^2T^*M$. It preserves the orthogonal splitting $\Lambda^2=\Lambda^+\oplus\Lambda^-$, so its restrictions are precisely $\mathcal W^\pm$. With any fixed curvature-operator convention, $\langle\mathcal W\eta,\zeta\rangle$ agrees with the corresponding component of $W$ up to one global sign. Since only vanishing is used below, that sign plays no role.

Put
\[
\eta_2^+=e^{13}-e^{24},\quad
\eta_2^-=e^{13}+e^{24},\quad
\eta_3^+=e^{14}+e^{23},\quad
\eta_3^-=e^{14}-e^{23}.
\]
Using $e^{12}=\tfrac12(\omega_++\omega_-)$ and $e^{23}=\tfrac12(\eta_3^+-\eta_3^-)$, the orthogonality of $\Lambda^+$ and $\Lambda^-$ gives
\[
W_{1223}
=\frac14\left(
\langle\mathcal W^+\omega_+,\eta_3^+\rangle
-\langle\mathcal W^-\omega_-,\eta_3^-\rangle
\right)=0.
\]
The $\eta_2^\pm$ decomposition similarly gives $W_{1224}=0$, and replacing $e^{12}$ by $e^{21}=-e^{12}$ gives
\[
W_{2113}=W_{2114}=0.
\]
For $p=q=2$, these are precisely the four mixed components occurring in \cref{cor:mixed-trace}. Thus the classical half-Weyl degeneracy is consistent with the curvature formula of Section~2 and yields $[\mathcal K_W,S]=0$.
\end{remark}

\begin{remark}[Relation with the local ambi-Hermitian classification]
Apostolov--Calderbank--Gauduchon also classify, on a dense open set, Einstein four-manifolds for which both half-Weyl tensors are degenerate \cite[Corollary~1]{ApostolovCalderbankGauduchon2016}. That local classification does not imply the compact rigidity theorem below. In particular, it concerns the local form of the Einstein metric $g$, whereas our conclusion is that the given $D$-parallel splitting is actually $\nabla^g$-parallel. The distinction is already visible in the constant-curvature branch: \cref{ex:hyperbolic} has $d\theta=0$ with a nonzero Lee form and is not special. The global argument below is therefore needed to pass from closedness of $D$ to $D=\nabla^g$.
\end{remark}

\begin{corollary}[Einstein case]\label{cor:4d-einstein-closed}
Every rank-$(2,2)$ Einstein conformal product has closed adapted Weyl connection.
\end{corollary}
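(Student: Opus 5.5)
The plan is to reduce the corollary to \cref{thm:4d-closed} by checking its Ricci hypothesis. Suppose $g$ is Einstein, so $\Ric=\lambda g$ for a function $\lambda$; in dimension four $\lambda$ is constant, though that is not needed. Then $\Rish=\lambda\Id$, and this commutes with every endomorphism, in particular with both $J_+$ and $J_-$ from \cref{prop:ambihermitian}. By \cref{lem:ambi-Ricci}, condition (2) holds, equivalently \eqref{eq:4d-Ricci-block-scalar} with $\mu_1=\mu_2=\lambda$. \Cref{thm:4d-closed} then gives $d\theta=0$, which by definition means the adapted Weyl connection $D$ is closed.

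There is one technical point: $J_\pm$ in \cref{prop:ambihermitian} are defined only locally, possibly after passing to an orientation cover of the plane bundles. Closedness $d\theta=0$ is a pointwise local statement, and $\theta$ pulls back to the Lee form of the pulled-back structure. So it suffices to apply the argument on each such local chart or cover and conclude $d\theta=0$ everywhere.

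As an independent check, avoiding the ambi-Hermitian input, I would use \cref{thm:main-obstruction} directly. If $g$ is Einstein, $[\Rish,S]=0$, and by \cref{cor:mixed-trace} it is enough to show that the mixed partial Weyl traces $W_{1113}+W_{1223}$ and their analogues vanish. The key step would be to show that for $p=q=2$ the Einstein condition, together with the ambi-Hermitian structure, forces these traces to vanish. This is exactly the half-Weyl degeneracy computation in the preceding remark, so I expect it would fall back on \cite{ApostolovCalderbankGauduchon2016}. The main obstacle in a fully self-contained proof would therefore be establishing degeneracy of $\mathcal W^\pm$ from the Einstein condition. I would rely on the cited result rather than reprove it, and the reduction in the first paragraph is then immediate.
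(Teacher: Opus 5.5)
Your proposal is correct and matches the paper's proof. The paper likewise observes that the Einstein condition gives the block-scalar Ricci form of \cref{thm:4d-closed} with $\mu_1=\mu_2=\lambda$, so that theorem immediately yields $d\theta=0$. Your remark about applying the argument locally, or on the orientation cover, is consistent with how the paper sets up $J_\pm$. The ``independent check'' in your final paragraph is not needed for the corollary.
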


\begin{proof}
For an Einstein metric, \eqref{eq:4d-Ricci-block-scalar} holds with $\mu_1=\mu_2=\lambda$.
\end{proof}

\begin{remark}[Why a curvature hypothesis remains necessary]
The Riemannian Goldberg--Sachs input above requires $J$-invariance of the Ricci tensor; a Hermitian structure alone does not imply the Weyl-eigenform conclusion. Belgun and Moroianu construct non-closed conformal products in dimension four \cite{BelgunMoroianu2011}, so rank $(2,2)$ by itself cannot imply \eqref{eq:4d-closed}. The local theorem identifies a natural intermediate curvature condition: ambi-Hermitian Ricci invariance supplies the half-Weyl degeneracy needed for the classical closedness argument and the curvature interpretation identifies the same vanishing in the mixed Weyl contraction. The structural identity \eqref{eq:main-obstruction} itself is assumption-free. The Ricci-invariance assumption is exactly what is needed in the ambi-Hermitian curvature argument; we do not attempt to classify the non-Einstein case.
\end{remark}

\begin{corollary}\label{cor:4d-KW}
Under the assumptions of \cref{thm:4d-closed},
\[
[\KW,S]=0.
\]
Moreover,
\begin{equation}\label{eq:4d-Lee-components}
d\theta_1=\theta_1\wedge\theta_2,
\qquad
 d\theta_2=-\theta_1\wedge\theta_2.
\end{equation}
Consequently,
\[
d\theta_1=0
\iff
d\theta_2=0
\iff
\theta_1\wedge\theta_2=0.
\]
Moreover, for $X\in T_1$ and $U\in T_2$,
\[
(\nabla_X\theta)(U)=(\nabla_U\theta)(X)=\theta_1(X)\theta_2(U).
\]
\end{corollary}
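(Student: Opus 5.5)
The plan is to read off each assertion of \cref{cor:4d-KW} from results already established, using the closedness supplied by \cref{thm:4d-closed} together with the assumption-free identities of Sections~2 and~3.

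First, under the hypotheses of \cref{thm:4d-closed} we have $d\theta=0$. Since $p=q=2\ge2$, \cref{thm:main-obstruction} applies and gives
\[
[\KW,S]=\frac{4(p-1)(q-1)}{n-2}(d\theta)^\sharp=2(d\theta)^\sharp=0.
\]
Equivalently, this follows from the closedness characterization corollary.

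Second, \cref{lem:ambi-Ricci} shows that Ricci invariance under $J_\pm$ forces $\Ric(T_1,T_2)=0$, so the standing hypothesis of Section~3 holds. We may therefore apply \cref{prop:Lee-decomp} with $F=d\theta=0$; the coefficients $(p-1)/(n-2)$ and $(q-1)/(n-2)$ are irrelevant because $F$ vanishes. This yields $d\theta_1=\theta_1\wedge\theta_2$ and $d\theta_2=-\theta_1\wedge\theta_2$. The three-way equivalence is then immediate, since each of $d\theta_1$ and $d\theta_2$ equals $\pm\theta_1\wedge\theta_2$.

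Third, for the mixed covariant derivatives we use \cref{cor:mixed-nabla-theta} in its block-diagonal form. With $d\theta=0$ both correction terms disappear, leaving $(\nabla_X\theta)(U)=(\nabla_U\theta)(X)=\theta_1(X)\theta_2(U)$.

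No step is a genuine obstacle. The only point requiring care is to check that the block-diagonal Ricci hypothesis, which is needed for \cref{prop:Lee-decomp} and for the reduced form of \cref{cor:mixed-nabla-theta}, really follows from the ambi-Hermitian Ricci invariance; \cref{lem:ambi-Ricci} supplies exactly this.
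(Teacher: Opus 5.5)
Your proposal is correct and follows essentially the same route as the paper: specialize \cref{thm:main-obstruction} to $(d\theta)^\sharp=\tfrac12[\KW,S]$ and combine it with \cref{thm:4d-closed}, then apply \cref{prop:Lee-decomp} and \cref{cor:mixed-nabla-theta} with $d\theta=0$. Your use of \cref{lem:ambi-Ricci} to verify $\Ric(T_1,T_2)=0$ states explicitly a hypothesis that the paper's proof uses without comment.
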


\begin{proof}
For $n=4$ and $p=q=2$, \cref{thm:main-obstruction} becomes
\[
(d\theta)^\sharp=\frac12[\KW,S].
\]
Hence \cref{thm:4d-closed} gives the first assertion. Equations \eqref{eq:4d-Lee-components} follow from \cref{prop:Lee-decomp} with $d\theta=0$. The mixed covariant-derivative formula follows from \cref{cor:mixed-nabla-theta} with $p=q=2$ and $d\theta=0$.
\end{proof}

\begin{example}[Local closed but non-special four-dimensional model]
Take $p=q=2$ in \cref{ex:hyperbolic}. Then $d\theta=0$ but $\theta_1\wedge\theta_2\ne0$, so neither Lee component is closed. Hence \cref{thm:4d-closed} asserts closedness only; it does not imply specialness.
\end{example}

We now assume that $M$ is compact.

\begin{lemma}[Exactness]\label{lem:4d-exact}
Let $(M^4,g)$ be compact and Einstein and let $D$ be a rank-$(2,2)$ conformal product structure. Then $D$ is exact. Hence there is $\phi\in C^\infty(M)$ such that
\begin{equation}\label{eq:h-conformal}
D=\nabla^h,
\qquad
h=e^{2\phi}g,
\end{equation}
and $h$ has reducible holonomy.
\end{lemma}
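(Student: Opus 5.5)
By \cref{cor:4d-einstein-closed}, the adapted Weyl connection $D$ is closed, so $d\theta=0$. Closedness means that on the universal cover $\widetilde M$ the pulled-back Lee form is exact, $\theta=d\widetilde\phi$. Then $D$ is the Levi-Civita connection of $e^{2\widetilde\phi}\widetilde g$, and the deck group acts on this metric by homotheties. Whether $D$ is exact on $M$ itself is the question of whether the homothety character $\pi_1(M)\to\R_{>0}$ is trivial, equivalently whether the de Rham class $[\theta]\in H^1(M;\R)$ vanishes. I would therefore argue by contradiction and assume $[\theta]\ne0$. In that case $(M,[g],D)$ is a compact manifold carrying a closed, non-exact Weyl connection with reducible holonomy, since $D$ preserves the splitting $T_1\oplus T_2$. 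This is exactly a locally conformally product (LCP) structure, or, when $D$ is flat, a similarity structure.

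I would split into two cases according to whether $D$ is flat. In the non-flat case, I would invoke \cite[Definition~4.1 and Theorem~4.5]{BelgunFlamencourtMoroianu2025}. That result asserts that a compact manifold with a non-flat, non-exact closed Weyl connection of reducible holonomy admits no Einstein metric in the conformal class. This contradicts the hypothesis that $g$ is Einstein. The work here is only to check that our situation meets their definition: the holonomy of $D$ on $\widetilde M$ must be reducible and $D$ non-flat. Reducibility is given by the $D$-parallel splitting, and non-flatness is the case hypothesis.

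In the flat case, $D$ is a flat torsion-free conformal connection that is closed but not exact. Hence $M$ carries a similarity structure that is not a Euclidean structure. By Fried's classification \cite{Fried1980}, a compact manifold with a non-Euclidean similarity structure is finitely covered by a Hopf manifold $S^1\times S^3$. This has $b_2=0$ and $\chi=0$. I would then use the Hitchin--Thorpe reasoning: for an Einstein metric, the Gauss--Bonnet integrand is pointwise at least $\frac{1}{8\pi^2}|W|^2\ge0$, so $\chi=0$ forces $W=0$ and $\Ric=0$. The metric $g$ is therefore flat. A compact flat manifold is finitely covered by a torus, which has infinite fundamental group of polynomial growth. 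A finite cover of $S^1\times S^3$ would instead need fundamental group virtually $\mathbb Z$ and the rational homology of $S^1\times S^3$, which is inconsistent with a torus $T^4$. This gives the contradiction. (Alternatively, on a flat compact $g$ the harmonic representative of $[\theta]$ is parallel, and one may check directly that this is compatible with $D$ being the closed connection only if $\theta$ is exact.)

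Hence $[\theta]=0$, so $\theta=d\phi$ globally for some $\phi\in C^\infty(M)$. By \eqref{eq:weyl-connection}, $D=\nabla^h$ with $h=e^{2\phi}g$. Since $\nabla^h$ preserves the orthogonal splitting, $h$ has reducible holonomy. I expect the main obstacle to be the flat branch: making sure that the passage through Fried's classification and the Einstein/Gauss--Bonnet argument correctly excludes every non-Euclidean similarity structure. The non-flat branch is essentially a citation.
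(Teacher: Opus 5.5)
Your proof is correct, and up to the flat non-exact branch it is the paper's argument. Both use closedness from the four-dimensional Einstein corollary, assume $[\theta]\ne0$ for a contradiction, exclude the non-flat branch by Belgun--Flamencourt--Moroianu's Theorem~4.5, and reduce the flat branch to Fried's classification. The difference is in how the flat branch yields a contradiction.

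\textbf{The paper's route.} Flatness of $D$ makes $g$ locally conformal to a flat metric, so $W_g=0$ by conformal invariance of the Weyl tensor. With the Einstein condition this gives constant sectional curvature. The universal cover is then $S^4$ or $\R^4$, which is incompatible with the radiant model $\R^4\setminus\{0\}$ supplied by Fried.

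\textbf{Your route.} You use the finite-Hopf-cover form of Fried's theorem to get $\chi(M)=0$. The Chern--Gauss--Bonnet integrand of an Einstein metric is a positive combination of $|W|^2$ and $\Scal^2$, so this forces $W=0$ and $\Scal=0$, hence $g$ is flat. Bieberbach then makes $\pi_1(M)$ virtually $\mathbb Z^4$, while the Hopf cover makes it virtually $\mathbb Z$. This works; for non-orientable $M$, pass to the orientation double cover. It gives the slightly stronger intermediate fact that $g$ is flat, not just of constant curvature. The cost is a global integral identity where the paper needs only the pointwise fact that a flat $D$ forces $W_g=0$, which you could have used directly.

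\textbf{Wording fix.} ``Polynomial growth'' does not separate the two cases, since $\mathbb Z$ also has polynomial growth. The actual contradiction is that a group cannot be both virtually $\mathbb Z$ and virtually $\mathbb Z^4$. Equivalently, the universal cover cannot be both $\R\times S^3$ and $\R^4$.
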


\begin{proof}
By \cref{cor:4d-einstein-closed}, $D$ is closed. Suppose, for contradiction, that it is non-exact. Since $D$ preserves the rank-$(2,2)$ splitting, its holonomy is reducible.

Assume first that $D$ is non-flat. We use the terminology of Belgun--Flamencourt--Moroianu \cite[Definition~4.1]{BelgunFlamencourtMoroianu2025}: on a compact conformal manifold an LCP structure is a Weyl connection that is closed, non-exact, non-flat, and has reducible holonomy. Thus the present branch is precisely an LCP structure compatible with the Einstein metric $g$. Their Theorem~4.5 states that no LCP structure is compatible with a compact Einstein metric. Hence this branch is impossible.

It remains to exclude the flat non-exact branch, which is excluded from the definition of an LCP structure. Because $D$ is closed, its lift to the simply connected universal cover is the Levi-Civita connection of a $D$-parallel flat metric $h_0$ in the lifted conformal class, and the deck group acts on $(\widetilde M,h_0)$ by similarities. Exactness is equivalent to all deck similarities being isometries. Since $M$ is compact and $D$ is assumed non-exact, Fried's classification of closed similarity manifolds \cite{Fried1980} applies. In the form recalled in \cite[Section~4]{BelgunFlamencourtMoroianu2025}, the non-exact flat case is radiant and its developing image is the punctured Euclidean model; in particular, the universal cover is diffeomorphic to
\[
\R^4\setminus\{0\}.
\] On the other hand, flatness of $D$ means locally that $D$ is the Levi-Civita connection of a flat metric conformal to $g$. Weyl curvature is conformally invariant, so $W_g=0$. Since $g$ is Einstein, it has constant sectional curvature. The lift of $g$ to the universal cover is complete and simply connected, hence its underlying manifold is diffeomorphic to $S^4$ in the positive-curvature case and to $\R^4$ in the zero or negative-curvature cases. Neither is diffeomorphic to $\R^4\setminus\{0\}$; for instance, the latter has nontrivial third homology. This contradiction excludes the flat branch as well.

Therefore $D$ is exact, so $\theta=d\phi$ for some global function $\phi$, and \eqref{eq:h-conformal} follows from \eqref{eq:weyl-connection}. Since $D$ preserves $T_1$ and $T_2$, these distributions are $\nabla^h$-parallel and $h$ has reducible holonomy.
\end{proof}

\begin{lemma}[One-factor dependence and warped reduction]\label{lem:warped-reduction}
Assume that $\phi$ in \eqref{eq:h-conformal} is nonconstant. On the universal cover, after possibly interchanging the factors, the lifted conformal factor depends on only one de Rham factor. Consequently the lifted Einstein metric has the form
\begin{equation}\label{eq:warped-reduction}
\widetilde g=g_B+\psi^2 h_F,
\end{equation}
where
\[
\dim B=\dim F=2,
\qquad
\psi=e^{-\phi}>0,
\qquad
 g_B=\psi^2h_B.
\]
The metric $g_B$ is complete.
\end{lemma}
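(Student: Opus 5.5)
The plan is to lift everything to the universal cover $\widetilde M$ and use the Riemannian product structure of $h$ there. By \cref{lem:4d-exact}, $h=e^{2\phi}g$ has $\nabla^h$-parallel splitting $T_1\oplus T_2$. Since $M$ is compact, $g$ is complete, and so is its lift $\widetilde g$. I would first show that the lift $\widetilde h$ is complete. The natural route is to note that $\phi$ is the lift of a function on the compact manifold $M$, hence bounded. Then $\widetilde h$ and $\widetilde g$ are uniformly equivalent, so $\widetilde h$ is complete. The de Rham decomposition theorem then gives an isometric splitting
\[
(\widetilde M,\widetilde h)=(M_1^2,h_1)\times(M_2^2,h_2),
\]
with $T_i$ tangent to $M_i$. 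We then have $\widetilde g=e^{-2\phi}(h_1+h_2)$, which is an Einstein metric conformal to a product of two complete surfaces.

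The second step invokes the four-dimensional alternative of K\"uhnel--Rademacher \cite[Theorem~3.2]{KuhnelRademacher2016} for product metrics conformal to Einstein metrics. Locally it gives two possibilities. In the first, the conformal factor $e^{-\phi}$ depends on only one factor; that factor becomes a warped-product base and the conformal factor is a concircular field on it. In the second, exceptional branch, both factors are extremal surfaces and the factor genuinely depends on both. Working on a dense open set where $d\phi\neq0$ and using analyticity of Einstein metrics, I would promote the local alternative to a global one on $\widetilde M$.

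The main obstacle is excluding the extremal two-sided branch. I would try Tashiro's theorem \cite[Theorem~1]{Tashiro1965}. In that branch, the restriction of $\psi=e^{-\phi}$ to each factor satisfies a concircular-type Hessian equation $\Hess_{h_i}\psi=\rho_i h_i$ with respect to the complete metric $h_i$. Tashiro then forces each complete $(M_i,h_i)$ to be a Euclidean, spherical or hyperbolic model with $\psi$ of explicit form. I would use boundedness of $\phi$ from above and below, inherited from compactness of $M$, to rule out the unbounded Euclidean and hyperbolic profiles. The spherical case is excluded because it produces a zero of $\psi$, contradicting $\psi>0$. This leaves only the case where $\psi$ is constant along one factor, say $M_2=F$.

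Once $\psi$ depends only on $B=M_1$, we get
\[
\widetilde g=\psi^2(h_B+h_F)=g_B+\psi^2h_F, \qquad g_B=\psi^2h_B,
\]
which is \eqref{eq:warped-reduction}. Completeness of $g_B$ follows because $\psi$ is bounded between positive constants and $h_B$ is complete. Alternatively, $B$ with the metric $g_B$ is a totally geodesic factor of the complete metric $\widetilde g$ in a warped product, since horizontal leaves are totally geodesic, and hence is complete.
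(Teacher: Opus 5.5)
Your outline is the paper's: de Rham splitting of $\widetilde h$, the K\"uhnel--Rademacher alternative, exclusion of the two-sided branch, then the warped form, with completeness of $g_B$ from the two-sided bounds on $\psi$. The first and last steps are fine. The gap is in excluding the two-sided branch, which is the substance of the lemma.

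First, the concircular equations hold for the additive summands of $f=e^{\phi}=\psi^{-1}$. Writing $f=a(y)+b(x)$, one has $\Hess^{h_B}a=\rho_Bh_B$ and $\Hess^{h_F}b=\rho_Fh_F$; restrictions of $\psi=1/(a+b)$ satisfy no such equation.

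More importantly, Tashiro's theorem does not say that a complete surface carrying a nonconstant concircular field is a flat, round or hyperbolic model with an explicit profile. It only gives a rotational normal form and a topological trichotomy according to whether the field has $0$, $1$ or $2$ stationary points. In the extremal branch the Gaussian curvatures are nonconstant, so there are no explicit profiles to test. Boundedness alone also fails to exclude the noncompact cases. On a surface $d\rho=-K\,da$, and with $K$ proportional to $a$ the profile along a geodesic solves $a''=\rho(a)$ with $\rho$ quadratic. Such equations have bounded homoclinic-type solutions with a single stationary point, tending to an equilibrium value at infinity.

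What is needed is that the factor functions attain both extrema. The paper gets this from compactness of $M$: the $h$-sectional curvatures $\kappa_i$ of the planes $T_i$ are functions on $M$ whose lifts are $K_B\circ\operatorname{pr}_B$ and $K_F\circ\operatorname{pr}_F$. Hence $K_B$ and $K_F$, proportional to $a$ and $b$ in this branch, attain their maxima and minima. Equivalently, $f=a+b$ descends to $M$ and attains its extrema, which forces $a$ and $b$ to attain theirs. Only then does Tashiro's two-stationary-point case apply and make both factors diffeomorphic to $S^2$.

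Second, your exclusion of the spherical case has no basis. $\psi>0$ holds by definition, and nothing produces a zero: already on a round sphere the concircular functions $A\cos r+C$ need not vanish, and only the sum $a+b$ is constrained to be positive. Once both factors are spheres, you are left with a compact product $B\times F$ carrying the Einstein metric $f^{-2}(h_B+h_F)$, with $f=a+b$ depending on both factors. This configuration must be ruled out by a separate argument. The paper uses K\"uhnel--Rademacher's compact-product result (Corollary~3.4), whose proof treats the exceptional extremal branch explicitly.

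You should also keep separate the constant-curvature two-sided branch $K_F=-K_B$ of their Theorem~3.2. There your boundedness argument does work. Either both factors are Euclidean planes, where $a$ and $b$ are quadratic, or one factor is hyperbolic. In the hyperbolic case, after a shift by a constant, the only bounded solution of $u''=\kappa u$ along a complete geodesic is $u\equiv0$.
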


\begin{proof}
The metric $h$ is complete because it is a smooth Riemannian metric on compact $M$. Thus its lifted metric on the simply connected universal cover is complete and reducible. By the global de Rham theorem,
\[
(\widetilde M,\widetilde h)
=(B^2,h_B)\times(F^2,h_F).
\]
The lift of $\phi$ is bounded because it comes from a function on compact $M$. Write
\[
\widetilde g=f^{-2}(h_B+h_F),\qquad f=e^{\phi}>0.
\]
Both $(B,h_B)$ and $(F,h_F)$ are complete. Suppose that $f$ depends nontrivially on both factors. Since both factors have dimension two, it is important to use the full four-dimensional alternative of K\"uhnel--Rademacher rather than the simplified complete-product corollary. By \cite[Theorem~3.2]{KuhnelRademacher2016},
\[
f(y,x)=a(y)+b(x),
\]
where $a$ and $b$ are both nonconstant. There are two possible subcases.

First assume that $(B,h_B)$ and $(F,h_F)$ have constant Gaussian curvatures $K_B$ and $K_F=-K_B$. The product equations in this branch give
\[
\Hess^{h_B}a=\rho_B h_B,
\qquad
\Hess^{h_F}b=\rho_F h_F
\]
for smooth functions $\rho_B$ and $\rho_F$. Applying the Ricci identity to the first equation and using the surface curvature formula gives
\[
d\rho_B=-K_B\,da.
\]
Since $K_B$ is constant, this integrates globally on the connected factor $B$ to
\[
\rho_B=-K_Ba+c_B,
\qquad
\Hess^{h_B}a=(-K_Ba+c_B)h_B
\]
for a constant $c_B$. Similarly,
\[
\Hess^{h_F}b=(-K_Fb+c_F)h_F
\]
for a constant $c_F$; no relation between $c_B$ and $c_F$ is needed below.

Because $f$ is bounded on $B\times F$, each of $a$ and $b$ is bounded: for example, fixing $x_0\in F$ gives $a(y)=f(y,x_0)-b(x_0)$. If $K_B=K_F=0$, the complete simply connected factors are Euclidean planes and
\[
a(x)=\frac{c_B}{2}|x|^2+\ell(x)+C
\]
for a linear function $\ell$ and a constant $C$. Boundedness forces $c_B=0$ and $\ell=0$, so $a$ is constant; the same argument applies to $b$, contradicting the assumption that both are nonconstant. If $K_B\ne0$, one of the two curvatures is negative. On the corresponding complete simply connected hyperbolic factor, writing $\kappa=-K>0$ and adding a constant to the relevant bounded function reduces its Hessian equation along every complete unit-speed geodesic to
\[
u''=\kappa u.
\]
The only bounded solution on $\mathbb R$ is $u\equiv0$. Since every point of the factor lies on such a complete geodesic, the original function is constant on that factor. This again contradicts the assumption that both $a$ and $b$ are nonconstant.

It remains to consider the exceptional two-surface branch in \cite[Theorem~3.2]{KuhnelRademacher2016}. In this branch both Gaussian curvatures are nonconstant: in the notation used in the derivation there, $K_B$ and $K_F$ are proportional to the nonconstant functions $a$ and $b$ with a nonzero proportionality constant; otherwise one falls back into the constant-curvature branch. They satisfy
\[
\Hess^{h_B}K_B=\frac{\Delta_{h_B}K_B}{2}h_B,
\qquad
\Hess^{h_F}K_F=\frac{\Delta_{h_F}K_F}{2}h_F.
\]
To see the relevant global consequence, define on $M$ the smooth functions
\[
\kappa_i(x):=\sec_h(T_i(x)),\qquad i=1,2,
\]
where $\sec_h(T_i(x))$ denotes the $h$-sectional curvature of the two-plane $T_i(x)\subset T_xM$. Because $h$ and the two plane distributions are globally defined, $\kappa_i$ are smooth functions on the compact manifold $M$. On the universal cover their lifts are exactly $K_B\circ\operatorname{pr}_B$ and $K_F\circ\operatorname{pr}_F$. Since the covering map is surjective, $K_B$ and $K_F$ therefore have the same value sets as $\kappa_1$ and $\kappa_2$, respectively. In particular, each curvature function attains a minimum and a maximum on its complete factor. Since both curvatures are nonconstant, Tashiro's two-stationary-point theorem \cite[Theorem~1]{Tashiro1965} places each factor in the spherical case: both $B$ and $F$ are compact surfaces diffeomorphic to $S^2$, with the two extrema corresponding to the poles. The product $B\times F$ is then compact, while $f=a+b$ is everywhere positive and depends on both factors. This is excluded by the compact product result \cite[Corollary~3.4]{KuhnelRademacher2016}, whose proof explicitly treats the exceptional two-dimensional extremal branch.

Both subcases with genuine dependence on the two factors are impossible. Hence $f$, and therefore $\phi$, depends on only one factor. After interchanging $B$ and $F$ if necessary, we may assume that it depends only on $B$. The bounded-factor idea is closely related to the argument in \cite[Lemma~5.3]{Moroianu2019}; the additional discussion above is needed here because in dimension $2+2$ K\"uhnel--Rademacher's Theorem~3.2 contains the extremal-surface alternative.

Since
\[
\widetilde g=e^{-2\phi}(h_B+h_F),
\]
putting $\psi=e^{-\phi}$ and $g_B=\psi^2h_B$ gives \eqref{eq:warped-reduction}. Finally, $\psi$ is bounded above and below by positive constants, so $g_B$ is complete whenever $h_B$ is complete.
\end{proof}

\begin{lemma}[Compactness of the two-dimensional base]\label{lem:base-compact}
If the positive warping function $\psi$ in \eqref{eq:warped-reduction} is nonconstant, then the complete surface $(B,g_B)$ is diffeomorphic to $S^2$. In particular, it is compact.
\end{lemma}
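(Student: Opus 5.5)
The plan is to show that $\psi$ generates a concircular field on the complete base $(B,g_B)$, and then use compactness of $M$ together with Tashiro's classification \cite[Theorem~1]{Tashiro1965} to force the spherical case. The concircular equation comes from the Einstein condition for the warped product $\widetilde g=g_B+\psi^2h_F$. With $\dim F=2$, the standard O'Neill formulas give
\[
\Ric_{\widetilde g}|_{B}=\Ric_{g_B}-\frac{2}{\psi}\Hess^{g_B}\psi .
\]
On a surface $\Ric_{g_B}=K_Bg_B$, so $\widetilde g$ Einstein with constant $\lambda$ yields
\[
\Hess^{g_B}\psi=\frac{\psi}{2}(K_B-\lambda)\,g_B.
\]
Hence $\Hess^{g_B}\psi=\rho\,g_B$ for a smooth function $\rho$, and $\psi$ is a nonconstant concircular scalar field on the complete surface $(B,g_B)$. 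Completeness of $g_B$ was established in \cref{lem:warped-reduction}.

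Next I would run Tashiro's trichotomy. A complete manifold carrying a nonconstant concircular field $\psi$ is, according to the number of critical points of $\psi$ (zero, one, or two), respectively a warped product $\R\times_{\psi'}N$, a rotationally symmetric manifold diffeomorphic to $\R^2$, or a twisted sphere diffeomorphic to $S^2$. The aim is to reach the two-critical-point case. The boundedness input is that $\psi=e^{-\phi}$ is the lift of a function on compact $M$. It therefore attains its maximum and its minimum on $\widetilde M$, and hence on $B$, since $\psi$ depends only on $B$. A nonconstant function on $B$ attaining both a global maximum and a global minimum has at least two critical points, and these are distinct because the values differ. Tashiro's theorem then gives $B\cong S^2$, which is compact.

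The main obstacle is confirming that extrema are genuinely attained rather than only approached. I would argue this through the covering map: $\widetilde M\to M$ is surjective and $\phi$ is continuous on compact $M$, so the extreme values of $\phi$ occur at points of $M$, and every lift of such a point is an extremum of $\widetilde\psi$ on $\widetilde M$. Writing such a point as $(y,x)\in B\times F$, the point $y$ is an extremum of $\psi$ on $B$. As a fallback, I would exclude the zero- and one-critical-point cases directly. Along a unit-speed geodesic normal to the level sets, $\psi$ satisfies a second-order ODE, and in those cases $\psi$ is monotone along complete rays with no interior maximum. That contradicts the existence of an attained maximum with $\psi$ nonconstant.
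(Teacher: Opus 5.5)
Your proposal is correct and follows the paper's proof essentially step for step. The base component of the Einstein warped-product equation makes $\psi$ a concircular field on the complete surface $(B,g_B)$; surjectivity of $\widetilde M\to M$ then forces $\psi$ to attain both its maximum and its minimum on $B$, and Tashiro's two-stationary-point case gives $B\cong S^2$. Your fallback is not needed, and as stated it is slightly off in the one-critical-point case: there the unique critical point can itself be an extremum, and what actually fails is that both extrema cannot be attained.
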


\begin{proof}
The base component of the Einstein warped-product equation is
\begin{equation}
\Ric_{g_B}-2\psi^{-1}\Hess^{g_B}\psi
=\lambda g_B.
\end{equation}
Since $B$ is a surface, $\Ric_{g_B}=K_Bg_B$, and hence
\begin{equation}
\Hess^{g_B}\psi
=
\frac{\psi}{2}(K_B-\lambda)g_B.
\end{equation}
Thus $\psi$ is a concircular scalar field.

Because the covering map $\widetilde M\to M$ is surjective, the lifted function has the same value set as the original function on compact $M$. Since the lift depends only on $B$, its value set is also exactly the value set of $\psi:B\to\mathbb R$. Hence $\psi$ attains a minimum and a maximum on $B$. If it is nonconstant, these give two distinct stationary points. We apply the two-stationary-point case of Tashiro's classification of complete manifolds carrying a concircular scalar field \cite[Theorem~1]{Tashiro1965}: a complete manifold with such a nonconstant field and two stationary points belongs to the spherical rotational case; in dimension two the underlying surface is diffeomorphic to $S^2$, the stationary points are the two poles, and the regular level sets are circles. Only this topological and normal-form conclusion is needed below.
\end{proof}

\begin{lemma}[Exclusion of the nonconstant exact branch]\label{lem:warping-collapse}
There is no nonconstant $\phi$ in \eqref{eq:h-conformal} satisfying the compact Einstein rank-$(2,2)$ hypotheses.
\end{lemma}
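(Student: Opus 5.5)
Suppose $\phi$ is nonconstant. By \cref{lem:warped-reduction} the universal cover is a warped product $\widetilde g=g_B+\psi^2h_F$ with two-dimensional base and fibre, and by \cref{lem:base-compact} the base $(B,g_B)$ is diffeomorphic to $S^2$. In this situation $\psi$ is a nonconstant concircular field, with $\Hess^{g_B}\psi=\frac{\psi}{2}(K_B-\lambda)g_B$. The plan is to use the remaining Einstein equations, namely the fibre equation and the mixed equation (which is automatic), to reduce everything to a single ODE along the meridians of the rotational sphere. We then show that no smooth positive solution closes up at both poles. This is the rank-$(2,2)$ case of \cite[Corollary~5.4, Proposition~5.7 and Lemma~5.8]{KuhnelRademacher2016}.

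\textbf{Step 1.} Write the fibre equation. For $\dim F=2$ it reads
\[
K_F h_F-\bigl(\psi\Delta_{g_B}\psi+|d\psi|^2\bigr)h_F=\lambda\psi^2h_F .
\]
This makes $K_F=\mu$ a constant, and gives
\[
\psi\Delta\psi+|d\psi|^2+\lambda\psi^2=\mu .
\]

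\textbf{Step 2.} Use Tashiro's normal form. Write $g_B=dr^2+\psi'(r)^2d\vartheta^2/\psi'(0)^2$ on $r\in[0,L]$, with $\psi=\psi(r)$, so that $\psi'$ vanishes exactly at the poles $r=0,L$. The concircular equation then gives $\psi''=\frac{\psi}{2}(K_B-\lambda)$. The trace gives $\Delta\psi=2\psi''$, so Step~1 becomes
\[
2\psi\psi''+(\psi')^2+\lambda\psi^2=\mu .
\]
Multiplying by $\psi'$ integrates this to the first integral
\[
\psi(\psi')^2+\tfrac{\lambda}{3}\psi^3-\mu\psi=-C
\]
for a constant $C$.

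\textbf{Step 3.} Extract the contradiction from the boundary conditions at the poles. There $\psi'=0$, and smoothness of the rotational metric forces $\psi''(0)=-\psi''(L)$ up to the normalization coming from $\psi'(0)$. Concretely, the metric coefficient $\psi'/\psi'(0)$ must behave like $\pm r$ at each pole, with the sign determined by closing up, so $\psi''$ has opposite signs at the two endpoints. Set $P(\psi):=\mu\psi-\frac{\lambda}{3}\psi^3-C$, so that $(\psi')^2=P(\psi)/\psi$. Since $\psi'\ne0$ on $(0,L)$ and $\psi$ is monotone, the values $\psi_0=\psi(0)$ and $\psi_1=\psi(L)$ are consecutive positive roots of the cubic $P$, with $P>0$ between them. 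At a simple root one has $\psi''=P'(\psi_i)/(2\psi_i)$. The closing condition becomes
\[
\frac{P'(\psi_0)}{\psi_0}=-\frac{P'(\psi_1)}{\psi_1},
\]
which equates the magnitudes of the two pole accelerations after rescaling. Now $P(\psi)/\psi$ is a cubic over $\psi$, so $P'(\psi)/\psi$ can be written explicitly using Vieta's relations for the cubic, whose roots are $\psi_0$, $\psi_1$ and a third root $\psi_2=-(\psi_0+\psi_1)$ (the cubic has no quadratic term). Substituting shows that the closing condition forces $\psi_0=\psi_1$, which contradicts monotone nonconstancy. Equivalently, $\psi$ would be constant, so $\phi$ would be constant.

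\textbf{Main obstacle.} I expect the delicate point to be Step~3: pinning down the correct smoothness condition at the poles and making the algebraic identity come out with the right sign. The case $\lambda=0$, where $P$ degenerates to degree one, must be handled separately. There $(\psi')^2=\mu-C/\psi$ has only one root, so a second stationary point is impossible.
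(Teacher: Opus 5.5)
Your proposal is correct and is essentially the paper's proof. The paper uses $r=\psi$ itself as the coordinate and $\mathcal V=|\nabla\psi|^2$, and obtains the same first integral $\mathcal V=k-\frac{\lambda}{3}\psi^2+C_0/\psi$ (with $k=\mu$, $C_0=-C$) from the base equation and $K_B=-\frac12\mathcal V''$ rather than from the fibre equation; its closing condition $\mathcal V'(a)=-\mathcal V'(b)$ is exactly your $\psi''(0)=-\psi''(L)$, and it uses direct elimination where you use Vieta. Two small slips: the normalizing constant in Tashiro's form must be $\psi''(0)$, not $\psi'(0)$, which vanishes at the pole; and for $\lambda=0$ the degenerate case $\mu=C=0$ gives $\psi'\equiv0$ rather than a single root, which is still a contradiction.
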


\begin{proof}
Assume that $\phi$ is nonconstant. By \cref{lem:warped-reduction}, the lifted Einstein metric has the warped form \eqref{eq:warped-reduction} with nonconstant $\psi=e^{-\phi}$. By \cref{lem:base-compact}, $(B,g_B)$ is a compact surface in the spherical two-stationary-point case of Tashiro's classification. We use a direct argument; compactness of the total warped product $B\times_\psi F$ is not needed.

Set $r:=\psi$. The Einstein warped-product equation in the base directions is
\[
\Ric_{g_B}-2r^{-1}\Hess^{g_B} r=\lambda g_B.
\]
Since $\Ric_{g_B}=K_Bg_B$, this is
\begin{equation}\label{eq:concircular2}
\Hess^{g_B} r
=\frac r2(K_B-\lambda)g_B.
\end{equation}
On the regular set $dr\ne0$, put $\mathcal V:=|\nabla^{g_B} r|^2$. Differentiating $\mathcal V$ and using \eqref{eq:concircular2} gives
\[
d\mathcal V=2\Hess^{g_B}r(\nabla^{g_B}r,\cdot)
=r(K_B-\lambda)\,dr.
\]
Hence $\mathcal V$ is locally a function of $r$. Because \eqref{eq:concircular2} is pure trace, the preceding identity yields
\[
\Hess^{g_B}r=\frac12\mathcal V'(r)g_B.
\]
Choosing an angular coordinate $\vartheta$ along the regular level sets and fixing its scale once and for all, the metric takes the standard form
\begin{equation}
g_B=\frac{dr^2}{\mathcal V(r)}+\mathcal V(r)d\vartheta^2.
\end{equation}
For this metric
\[
K_B=-\frac12\mathcal V''(r).
\]
Substitution in \eqref{eq:concircular2} gives
\[
r\mathcal V''+2\mathcal V'+2\lambda r=0,
\]
and hence
\begin{equation}\label{eq:Q-solution}
\mathcal V(r)=k-\frac{\lambda}{3}r^2+\frac{C_0}{r}.
\end{equation}
Here the constant term has been denoted by $k$ because the fibre equation of the Einstein warped product shows that $(F,h_F)$ is Einstein and, since $\dim F=2$, has constant Gaussian curvature $k$; explicitly,
\[
k=r\Delta_{g_B} r+|\nabla^{g_B} r|^2+\lambda r^2.
\]
Using $\Delta_{g_B} r=\mathcal V'(r)$ in \eqref{eq:Q-solution} verifies that the right-hand side is indeed the constant term $k$. Equivalently, \eqref{eq:Q-solution} is the $2+2$ Riemannian specialization of the integrated equation in \cite[Corollary~5.4, equation~(29)]{KuhnelRademacher2016}. We keep the short derivation because it fixes the normalization used in the endpoint argument below.

Let
\[
a=\min_B r,\qquad b=\max_B r,
\qquad 0<a<b.
\]
Tashiro's two-stationary-point case identifies these extrema with the two poles of the compact surface, and the regular set is an annulus foliated by the level circles of $r$. Thus
\[
\mathcal V(a)=\mathcal V(b)=0.
\]
Both endpoint zeros are simple. Indeed, if at an endpoint $r_0$ one had $\mathcal V(r)=O((r-r_0)^2)$, then the radial distance
\[
\int_{r_0}^{r_0+\varepsilon}\frac{dr}{\sqrt{\mathcal V(r)}}
\]
would diverge, contradicting the fact that $r_0$ is a pole of the compact smooth surface at finite distance. Equivalently, smooth polar closing requires the radial profile $G=\sqrt{\mathcal V}$ to have nonzero first derivative at the pole.

We now make the endpoint condition explicit. Near either zero $r_0$ of $\mathcal V$, let $\rho$ be the $g_B$-distance from the pole. Since
\[
d\rho=\frac{dr}{\sqrt{\mathcal V(r)}},
\]
the expansion
$\mathcal V(r)=|\mathcal V'(r_0)|\,|r-r_0|+O(|r-r_0|^2)$ gives
\[
\mathcal V(r)
=
\frac{|\mathcal V'(r_0)|^2}{4}\rho^2+O(\rho^4).
\]
Thus smooth polar closing requires the period of $\vartheta$ to be
$4\pi/|\mathcal V'(r_0)|$. The same angular coordinate $\vartheta$ is used at both poles, so the two absolute derivatives agree. Since $\mathcal V>0$ on $(a,b)$, the endpoint derivatives have opposite signs:
\begin{equation}\label{eq:endpoint-smoothness}
\mathcal V'(a)=-\mathcal V'(b).
\end{equation}
This is the endpoint smoothness condition used in the complete two-dimensional-base analysis of \cite[Proposition~5.7 and Lemma~5.8]{KuhnelRademacher2016}.

From $\mathcal V(a)=\mathcal V(b)=0$ and \eqref{eq:Q-solution},
\[
C_0=\frac{\lambda}{3}a^3-ka
=\frac{\lambda}{3}b^3-kb.
\]
Since $a\ne b$,
\begin{equation}\label{eq:k-root}
k=\frac{\lambda}{3}(a^2+ab+b^2).
\end{equation}
At a zero of $\mathcal V$, \eqref{eq:Q-solution} gives
\[
\mathcal V'(r)=\frac{k}{r}-\lambda r.
\]
Consequently \eqref{eq:endpoint-smoothness} implies
\begin{equation}\label{eq:k-endpoint}
k=\lambda ab.
\end{equation}
If $\lambda\ne0$, comparison of \eqref{eq:k-root} and \eqref{eq:k-endpoint} yields
\[
3ab=a^2+ab+b^2,
\]
so $(a-b)^2=0$, contrary to $a<b$. If $\lambda=0$, the two root equations first give $k=0$ and then $C_0=0$, so \eqref{eq:Q-solution} gives $\mathcal V\equiv0$, again impossible on a nonempty regular set. Thus $r=\psi$ is constant.
\end{proof}

\begin{remark}
The complete warped-product analysis in \cite[Corollary~5.4, Proposition~5.7 and Lemma~5.8]{KuhnelRademacher2016} contains the corresponding ODE and endpoint obstruction. We have retained the two-dimensional calculation because, in the present setting, compactness is first obtained for the base rather than for the lifted total warped product, and the resulting argument is short.
\end{remark}

\begin{theorem}[Compact four-dimensional rigidity]\label{thm:4d-compact}
Let $(M^4,g)$ be a compact Einstein Riemannian manifold carrying a rank-$(2,2)$ conformal product structure with adapted Weyl connection $D$. Then
\begin{equation}
\boxed{D=\nabla^g.}
\end{equation}
In particular, $T_1$ and $T_2$ are $\nabla^g$-parallel.
\end{theorem}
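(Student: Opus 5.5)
The theorem follows by assembling the preceding lemmas. By \cref{cor:4d-einstein-closed} the adapted Weyl connection $D$ is closed. \Cref{lem:4d-exact} then upgrades this to exactness. Thus there is a global $\phi\in C^\infty(M)$ with $\theta=d\phi$ and $D=\nabla^h$ for $h=e^{2\phi}g$, and $h$ has reducible holonomy. The task is to show that $\phi$ is constant. Once this holds, $\theta=d\phi=0$, and \eqref{eq:weyl-connection} gives $D_XY=\nabla_XY$, that is, $D=\nabla^g$. Since $D$ preserves $T_1$ and $T_2$, these distributions are then $\nabla^g$-parallel.

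To prove that $\phi$ is constant, argue by contradiction. If $\phi$ were nonconstant, \cref{lem:warped-reduction} would put the lifted metric in the warped form \eqref{eq:warped-reduction}, with nonconstant $\psi=e^{-\phi}$ on a complete two-dimensional base. \Cref{lem:base-compact} makes that base a compact sphere of Tashiro type, and \cref{lem:warping-collapse} excludes precisely this configuration. Hence $\phi$ is constant.

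For the \emph{in particular} assertion on the universal cover, argue as follows. With $\theta=0$, the splitting is $\nabla^g$-parallel, and $\widetilde g$ is complete. The de Rham theorem therefore gives $\widetilde M=\Sigma_1\times\Sigma_2$ with a product metric. Einstein with constant $\lambda$ forces each surface factor to have Gaussian curvature $\lambda$, hence $\Sigma_\lambda^2\times\Sigma_\lambda^2$.

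The substantive difficulty lies entirely in the earlier lemmas: excluding the flat non-exact branch in \cref{lem:4d-exact}, and excluding the extremal-surface branch of K\"uhnel--Rademacher in \cref{lem:warped-reduction}. Given those results, the only point to check here is that constancy of $\phi$ indeed yields $\theta=0$, which is immediate since $\theta=d\phi$ globally.
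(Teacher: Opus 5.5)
Your proof is correct and follows the paper's own argument. You get closedness from \cref{cor:4d-einstein-closed}, exactness from \cref{lem:4d-exact}, and constancy of $\phi$ from \cref{lem:warping-collapse}, which itself rests on \cref{lem:warped-reduction,lem:base-compact}; then $\theta=d\phi=0$ gives $D=\nabla^g$, which the paper phrases equivalently as a constant homothety having the same Levi-Civita connection. Your third paragraph actually proves \cref{cor:universal-cover} rather than the theorem's ``in particular'' clause, which your first paragraph already settled, so it is harmless but superfluous.
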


\begin{proof}
By \cref{lem:4d-exact}, $D=\nabla^{e^{2\phi}g}$. \cref{lem:warping-collapse} excludes the nonconstant exact branch, so $\phi$ is constant. Therefore $e^{2\phi}g$ is a constant homothety of $g$ and has the same Levi-Civita connection. Thus $D=\nabla^g$.
\end{proof}

\begin{example}[Positive, zero and negative models]
The theorem has natural models in all three signs of the Einstein constant. For $\lambda>0$, take $S^2_\lambda\times S^2_\lambda$ with the product metric. For $\lambda=0$, a flat four-torus with a parallel $2+2$ splitting gives a compact example. For $\lambda<0$, quotients of $\Hy^2_\lambda\times\Hy^2_\lambda$ by torsion-free cocompact discrete subgroups of $\mathrm{Isom}^+(\Hy^2)\times\mathrm{Isom}^+(\Hy^2)$ preserve the factor distributions and provide negative Einstein examples. Irreducible cocompact lattices give quotients that are not global products of two compact surfaces, so the universal-cover statement below is the correct general classification. Thus the rigidity theorem says that, up to the possible quotient action, these standard product models exhaust the compact rank-$(2,2)$ Einstein case.
\end{example}

\begin{corollary}[Universal-cover classification]\label{cor:universal-cover}
Under the assumptions of \cref{thm:4d-compact},
\begin{equation}\label{eq:universal-classification}
\boxed{
(\widetilde M,\widetilde g)
\cong
(\Sigma_\lambda^2,g_\lambda)
\times
(\Sigma_\lambda^2,g_\lambda),
}
\end{equation}
where $\Sigma_\lambda^2$ is the simply connected complete surface of Gaussian curvature $\lambda$. In particular,
\[
\lambda>0:\quad S^2_\lambda\times S^2_\lambda,
\]
\[
\lambda=0:\quad \R^4,
\]
and
\[
\lambda<0:\quad \Hy^2_\lambda\times\Hy^2_\lambda.
\]
Consequently $g$ is locally symmetric:
\[
\nabla R=0.
\]
\end{corollary}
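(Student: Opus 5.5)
By \cref{thm:4d-compact}, $D=\nabla^g$, so the plan is to apply the global de Rham theorem and then pin down the two factors using the Einstein equation. The key steps, in order, are the following.

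First, I pass to the universal cover. Since $T_1$ and $T_2$ are $\nabla^g$-parallel plane distributions on the compact manifold $M$, their lifts to $\widetilde M$ are $\nabla^{\widetilde g}$-parallel. The metric $\widetilde g$ is complete, because it is the lift of a metric on a compact manifold, and $\widetilde M$ is simply connected. The global de Rham decomposition theorem then gives an isometric splitting
\[
(\widetilde M,\widetilde g)\cong(B^2,g_B)\times(F^2,g_F),
\]
in which each factor is a complete, simply connected Riemannian surface and $TB$, $TF$ correspond to the lifts of $T_1$, $T_2$.

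Second, I use the Einstein condition. For a Riemannian product, the Ricci tensor is the sum of the Ricci tensors of the factors. On a surface, $\Ric_{g_B}=K_Bg_B$ and $\Ric_{g_F}=K_Fg_F$. The equation $\Ric_{\widetilde g}=\lambda\widetilde g$ restricted to each factor then forces $K_B\equiv\lambda$ and $K_F\equiv\lambda$. In particular, both Gaussian curvatures are constant and equal.

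Third, I identify each factor. By the Killing--Hopf theorem, a complete simply connected surface of constant curvature $\lambda$ is isometric to the model $\Sigma^2_\lambda$. This model is the round sphere of radius $\lambda^{-1/2}$ when $\lambda>0$, the Euclidean plane when $\lambda=0$, and the hyperbolic plane of curvature $\lambda$ when $\lambda<0$. This proves \eqref{eq:universal-classification}, and the three listed cases follow, using $\R^2\times\R^2=\R^4$ when $\lambda=0$.

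Finally, each factor is a space form, so it is locally symmetric, and a Riemannian product of locally symmetric spaces is locally symmetric. Hence $\nabla\widetilde R=0$. Because the covering map is a local isometry, this descends to $\nabla R=0$ on $M$.

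I do not expect a serious obstacle. The only point needing care is checking the hypotheses of de Rham's theorem, namely completeness and simple connectivity of the cover together with parallelism of the lifted splitting. The former is automatic from compactness, and the latter comes directly from \cref{thm:4d-compact}.
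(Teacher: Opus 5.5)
Your proposal is correct and follows the paper's proof essentially step for step. You take the $\nabla^g$-parallel splitting from \cref{thm:4d-compact}, apply global de Rham on the complete simply connected cover, use $\Ric_{g_i}=\lambda g_i$ on each surface factor to force constant Gaussian curvature $\lambda$, and conclude local symmetry from the product of space forms. Your explicit appeal to Killing--Hopf, and the descent of $\nabla R=0$ through the covering local isometry, only spell out steps the paper leaves implicit.
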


\begin{proof}
By \cref{thm:4d-compact}, the splitting is $\nabla^g$-parallel. The complete simply connected universal cover therefore splits by de Rham as
\[
(\widetilde M,\widetilde g)=(M_1^2,g_1)\times(M_2^2,g_2).
\]
Since $\Ric_{\widetilde g}=\lambda\widetilde g$, each factor satisfies $\Ric_{g_i}=\lambda g_i$. On a surface $\Ric=Kg$, so both factors have constant Gaussian curvature $\lambda$. This proves \eqref{eq:universal-classification}. Products of space forms are locally symmetric.
\end{proof}

\begin{remark}[Scope of the classification]
The statement of \cref{cor:universal-cover} is intentionally made on the universal cover. In the negative case, compact quotients of $\Hy^2\times\Hy^2$ preserving the two distributions need not be global direct products. No stronger global product assertion is needed for the rigidity result.
\end{remark}

\begin{remark}[Relation with previous rigidity results]
The local closedness statement in \cref{thm:4d-closed} is a consequence of classical ambi-Hermitian geometry. The induced opposite Hermitian structures come from the conformal-product framework of \cite{BelgunMoroianu2011}, while the half-Weyl degeneracy mechanism is standard \cite{ApostolovGauduchon1997,ApostolovCalderbankGauduchon2016}. The curvature interpretation above records how those same conditions appear in the off-diagonal contraction $\KW$ of Section~2.

The new four-dimensional conclusion is global. Theorem~1.1 of \cite{MoroianuPilca2026}, under a specialness assumption, reduces the universal-cover Einstein metric to a warped product. In rank $(2,2)$ we remove that assumption and prove the stronger conclusion $D=\nabla^g$, so the warped product collapses to a Riemannian product. By \cref{cor:4d-KW}, closedness alone does not supply specialness: it leaves the additional term $\theta_1\wedge\theta_2$.

There is also a small but useful distinction in the treatment of the non-exact case. The LCP definition used in \cite[Definition~4.1]{BelgunFlamencourtMoroianu2025} includes non-flatness, so Theorem~4.5 of that paper does not address the flat non-exact similarity branch. We therefore separate that branch and use Fried's classification, rather than absorbing it into the LCP argument. After exactness, the proof uses the full $2+2$ alternative in \cite[Theorem~3.2]{KuhnelRademacher2016} and the two-dimensional warped-product equations in their Section~5.
\end{remark}

\section{Conclusion and an open problem}

For conformal products with $p,q\ge2$, the Faraday form is determined by the off-diagonal part of the $S$-weighted Weyl contraction through \eqref{eq:main-obstruction}. The same curvature calculation gives the Ricci-corrected symmetric identity \eqref{eq:symmetric-companion}. Under block-diagonal Ricci curvature, the Lee-form component formulas of Section~3 impose additional algebraic restrictions on the mixed Weyl term.

The example in \cref{ex:nonclosed-block-Ricci} shows that $[\mathcal K_W,S]$ may be nonzero even when the Ricci tensor is block diagonal. We do not know whether this can occur for an Einstein metric in dimensions greater than four.

\begin{question}[Non-closed Einstein conformal products]
Let $(M^n,g)$ be Einstein and carry a conformal product structure with $p,q\ge2$. Can
\[
[\mathcal K_W,S]\ne0,
\qquad\text{equivalently}\qquad d\theta\ne0,
\]
occur? Equivalently, does the Einstein equation force the adapted Weyl connection to be closed whenever both distribution ranks are at least two? Section~4 answers this question affirmatively in dimension four. In higher dimensions, \eqref{eq:main-obstruction} reduces the problem to the off-diagonal Weyl contraction.
\end{question}

A non-closed Einstein example would also have to be non-special. Indeed, for $p,q\ge2$, the local Einstein analysis of Moroianu--Pilca shows that if either component of the Lee form is closed, then the other component is closed as well; see \cite[Lemma~3.2, Case~1]{MoroianuPilca2024} and the use of that case in the proof of \cite[Theorem~1.1]{MoroianuPilca2026}. Consequently,
\[
d\theta\not\equiv0
\quad\Longrightarrow\quad
\theta_1\ \text{and}\ \theta_2\ \text{are both non-closed}.
\]
The Einstein hypothesis is essential here. In \cref{ex:nonclosed-block-Ricci}, the Ricci tensor is block diagonal but the metric is not Einstein; there $\theta_2$ is closed whereas $\theta_1$ is not. Thus any higher-dimensional Einstein counterexample must lie outside the special class considered in \cite{MoroianuPilca2026}.

There is one further restriction on any possible counterexample. On the open set
\[
\Omega=\{\theta_1\ne0,\ \theta_2\ne0\},
\]
\cref{cor:rank-one} gives $\rk B_W\le1$ for the off-diagonal block $B_W:T_1\to T_2$ of $\mathcal K_W$. Thus an Einstein example with $d\theta\ne0$ would have to satisfy a strong rank-one condition in addition to the Einstein and Bianchi equations.

For later analytic use, the pointwise norm identity also gives, on a compact manifold,
\[
\int_M |d\theta|^2\,dv_g
=
\frac{(n-2)^2}{32(p-1)^2(q-1)^2}
\int_M |[\mathcal K_W,S]|_{\mathrm{HS}}^2\,dv_g.
\]
This is only the integrated form of the pointwise identity, but it may be useful in a Bochner-type study of the higher-dimensional problem.

\appendix
\section{Local mixed Ricci and partial traces}\label{app:mixed-Ricci}

This appendix derives the local Lee forms \eqref{eq:local-Lee}, the two partial curvature traces, and the mixed Ricci formula \eqref{eq:mixed-Ricci-local} directly from the Koszul formula and the curvature convention \eqref{eq:curvature-convention}. The mixed curvature identities are the same local components that appear as equations~(10) and~(13) in \cite{MoroianuPilca2024}. We include the calculation to make the sign conventions transparent. Their traces give a local verification of the partial Weyl formula for $d\theta$ and, under block-diagonal Ricci curvature, of the corresponding partial Riemann trace. Section~2 gives the coordinate-free derivation.

Let
\[
g=e^{2f_1}g_1+e^{2f_2}g_2
\]
on a product chart $\mathcal U_1\times\mathcal U_2$, with $g_i$ a metric on
$\mathcal U_i$ and $f_i\in C^\infty(\mathcal U_1\times\mathcal U_2)$, and set
$T_1=T\mathcal U_1$, $T_2=T\mathcal U_2$.  Throughout, $X,Y,Z$ denote vector
fields obtained from coordinate fields on $\mathcal U_1$ and $U,V,W$ vector
fields obtained from coordinate fields on $\mathcal U_2$; all brackets among
them that we use vanish, and $[X,Y]\in T_1$, $[U,V]\in T_2$.  Write
\[
\zeta_1=(\nabla f_1)_{T_2},
\qquad
\zeta_2=(\nabla f_2)_{T_1}.
\]

\subsection*{A.1. Connection components}

\begin{lemma}
With the above notation,
\begin{align}
\nabla_XU&=U(f_1)X+X(f_2)U,
\label{eq:app-mixed-connection}\\
(\nabla_XY)_{T_2}&=-g(X,Y)\zeta_1,
\qquad
(\nabla_UV)_{T_1}=-g(U,V)\zeta_2.
\label{eq:app-normal-connection}
\end{align}
\end{lemma}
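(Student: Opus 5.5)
The plan is to prove both formulas by the Koszul formula, applied to the coordinate-type vector fields fixed above. All brackets $[X,U]$, $[X,Y]$, $[U,V]$ vanish, and the mixed inner products $g(X,U)$ are zero. For $g=e^{2f_1}g_1+e^{2f_2}g_2$, one has $g(Y,Z)=e^{2f_1}g_1(Y,Z)$. Differentiating along a $T_2$-field $U$ therefore gives
\[
U\,g(Y,Z)=2U(f_1)\,g(Y,Z),
\]
because $g_1(Y,Z)$ depends only on the $\mathcal U_1$-variables. Likewise $X\,g(V,W)=2X(f_2)\,g(V,W)$. These two rules are the only input beyond the vanishing of the brackets.

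For \eqref{eq:app-mixed-connection} we compute $2g(\nabla_XU,Z)$ with $Z\in T_1$. In the Koszul formula every bracket term vanishes. The terms $X\,g(U,Z)$ and $Z\,g(X,U)$ vanish because the mixed inner products are zero. Only $U\,g(X,Z)=2U(f_1)g(X,Z)$ survives, so the $T_1$-component of $\nabla_XU$ is $U(f_1)X$.

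Next we test $2g(\nabla_XU,V)$ with $V\in T_2$. Here only $X\,g(U,V)=2X(f_2)g(U,V)$ survives, so the $T_2$-component is $X(f_2)U$. Adding the two components gives $\nabla_XU=U(f_1)X+X(f_2)U$.

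For \eqref{eq:app-normal-connection} we take $X,Y\in T_1$ and test against $U\in T_2$. The Koszul formula gives
\[
2g(\nabla_XY,U)=-U\,g(X,Y)=-2U(f_1)\,g(X,Y).
\]
The definition of $\zeta_1$ gives $U(f_1)=g(\nabla f_1,U)=g(\zeta_1,U)$, so $(\nabla_XY)_{T_2}=-g(X,Y)\zeta_1$. The statement for $(\nabla_UV)_{T_1}$ is the same computation with the roles of the factors exchanged, giving $-g(U,V)\zeta_2$.

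Finally, we extend from the chosen fields to arbitrary sections. The identities are tensorial in the relevant arguments: both sides of \eqref{eq:app-mixed-connection} are $C^\infty$-linear in $X$ and satisfy the same Leibniz rule in $U$ up to a $T_2$-term, and \eqref{eq:app-normal-connection} is the second fundamental form of the leaves, which is tensorial. It therefore suffices to check the identities on the coordinate fields, as done above. The main obstacle is purely bookkeeping: tracking the factors of $2$ in the Koszul formula and the convention $\zeta_1=(\nabla f_1)_{T_2}$ taken with respect to $g$. No step is conceptually difficult.
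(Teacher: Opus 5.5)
Your argument is correct and is exactly the paper's proof: you apply the Koszul formula, testing against $Y\in T_1$ and $V\in T_2$ for $\nabla_XU$ and against $U\in T_2$ for $(\nabla_XY)_{T_2}$. All bracket and mixed inner-product terms drop out, so only $U\,g(X,Y)=2U(f_1)g(X,Y)$ or $X\,g(U,V)=2X(f_2)g(U,V)$ survives.

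The one caveat is your final ``extension'' paragraph, which overreaches. The $T_2$-component $X(f_2)U$ of the first identity is not tensorial in $U$: replacing $U$ by $hU$ adds an extra $X(h)U$ to $\nabla_XU$. So that identity does not extend to arbitrary sections of $T_2$. The lemma only asserts it for the coordinate-type fields fixed in the notation, so you can simply drop that step.
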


\begin{proof}
The Koszul formula reads
\begin{align*}
2g(\nabla_AB,C)
={}&Ag(B,C)+Bg(A,C)-Cg(A,B)\\
&+g([A,B],C)-g([A,C],B)+g([C,B],A).
\end{align*}
Take $A=X$, $B=U$, $C=Y$.  Then $g(U,Y)=g(X,U)=0$, $[X,U]=[Y,U]=0$ and
$[X,Y]\in T_1$, so all bracket terms drop out and
\[
2g(\nabla_XU,Y)=U\bigl(g(X,Y)\bigr)=2U(f_1)g(X,Y),
\]
because $g(X,Y)=e^{2f_1}g_1(X,Y)$ with $g_1(X,Y)$ independent of the second
factor.  Taking $A=X$, $B=U$, $C=V$ gives, in the same way,
\[
2g(\nabla_XU,V)=X\bigl(g(U,V)\bigr)=2X(f_2)g(U,V).
\]
Together these give \eqref{eq:app-mixed-connection}.  Taking $A=X$, $B=Y$,
$C=U$ yields
\[
2g(\nabla_XY,U)=-U\bigl(g(X,Y)\bigr)=-2U(f_1)g(X,Y)
=-2g(X,Y)\,g(\zeta_1,U),
\]
which is the first identity in \eqref{eq:app-normal-connection}; the second
follows by exchanging the roles of the two factors.
\end{proof}

\begin{lemma}[Local Lee forms]
The adapted Weyl connection of the conformal product determined by
$g=e^{2f_1}g_1+e^{2f_2}g_2$ has components of the Lee form
\begin{equation}\label{eq:app-Lee}
\theta_1=-d_1f_2,
\qquad
\theta_2=-d_2f_1.
\end{equation}
\end{lemma}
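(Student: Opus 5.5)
The plan is to find the Weyl connection whose parallel splitting is $T_1\oplus T_2$ and then read off its Lee form. By \eqref{eq:weyl-connection}, a Weyl connection with Lee form $\theta$ satisfies, for $X\in T_1$ and $U\in T_2$,
\[
D_XU=\nabla_XU+\theta(U)X+\theta(X)U,
\]
because $g(X,U)=0$. The adapted Weyl connection is characterized as the unique one preserving both $T_1$ and $T_2$. So it suffices to show that the choice $\theta=-d_1f_2-d_2f_1$ makes $D$ preserve both distributions. Uniqueness then identifies this $\theta$ as the Lee form of the adapted connection.

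For mixed derivatives, insert \eqref{eq:app-mixed-connection}:
\[
D_XU=U(f_1)X+X(f_2)U+\theta(U)X+\theta(X)U.
\]
With $\theta(U)=-U(f_1)$ and $\theta(X)=-X(f_2)$ this vanishes, so $D_XU\in T_2$. The same computation with $X$ and $U$ interchanged gives $D_UX=0\in T_1$.

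For pure derivatives, take $X,Y\in T_1$. The $T_2$-component of $D_XY$ is
\[
(\nabla_XY)_{T_2}-g(X,Y)\,(\theta^\sharp)_{T_2}
=-g(X,Y)\zeta_1-g(X,Y)\,(\theta^\sharp)_{T_2},
\]
by \eqref{eq:app-normal-connection}; the terms $\theta(Y)X$ and $\theta(X)Y$ lie in $T_1$. Now $(\theta^\sharp)_{T_2}=-(\nabla f_1)_{T_2}=-\zeta_1$, so this component vanishes. The case $U,V\in T_2$ is symmetric and uses $\zeta_2$.

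Hence $D$ preserves $T_1$ and $T_2$, and $\theta_1=-d_1f_2$, $\theta_2=-d_2f_1$.

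The one point needing care is uniqueness. Suppose two Weyl connections preserve the splitting. Their difference is
\[
\alpha(Y)X+\alpha(X)Y-g(X,Y)\alpha^\sharp
\]
for some one-form $\alpha$. For $X\in T_1$ and $U\in T_2$, this difference must map $U$ into $T_2$, which forces $\alpha(U)X\in T_2$, so $\alpha(U)=0$. Symmetrically $\alpha$ vanishes on $T_1$, so $\alpha=0$. This uses only that $T_1$ and $T_2$ are both nonzero.

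The rest is the direct sign bookkeeping above, which should present no obstacle given the lemma on connection components.
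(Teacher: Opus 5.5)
Your proof is correct, but it runs in the opposite direction from the paper's. The paper assumes the adapted connection $D$ is given and solves for its Lee form directly. Using \eqref{eq:weyl-connection} and \eqref{eq:app-mixed-connection}, it writes $D_XU=(U(f_1)+\theta_2(U))X+(X(f_2)+\theta_1(X))U$. The requirement $D_XU\in T_2$ kills the $T_1$-coefficient and gives $\theta_2=-d_2f_1$. The requirement $D_UX\in T_1$, together with $\nabla_UX=\nabla_XU$ for commuting coordinate fields, gives $\theta_1=-d_1f_2$. These two mixed conditions already determine $\theta$ completely, so uniqueness comes for free. The pure directions are never examined, because existence of the adapted connection is taken from the conformal-product setting.

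You instead take the candidate $\theta=-d_1f_2-d_2f_1$ and verify that the resulting $D$ preserves both distributions in all four types of directions. The pure directions use \eqref{eq:app-normal-connection} together with $(\theta^\sharp)_{T_2}=-\zeta_1$ and $(\theta^\sharp)_{T_1}=-\zeta_2$. You then prove uniqueness separately. Your uniqueness argument on the difference form $\alpha$ is in substance the paper's mixed-component computation.

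What your route buys is an existence statement the paper does not prove: every local metric $e^{2f_1}g_1+e^{2f_2}g_2$ genuinely carries a Weyl connection preserving $T\mathcal U_1\oplus T\mathcal U_2$, namely the one with this Lee form. What the paper's route buys is brevity.

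Two points you leave implicit are harmless. The interchanged step $D_UX=0$ uses $[X,U]=0$, so that $\nabla_UX=\nabla_XU$. Checking preservation on coordinate fields suffices because they form local frames of $T_1$ and $T_2$.
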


\begin{proof}
By \eqref{eq:weyl-connection} and \eqref{eq:app-mixed-connection}, for
$X\in T_1$ and $U\in T_2$,
\[
D_XU=\nabla_XU+\theta(U)X+\theta(X)U-g(X,U)\theta^\sharp
=\bigl(U(f_1)+\theta_2(U)\bigr)X+\bigl(X(f_2)+\theta_1(X)\bigr)U.
\]
Since $D$ preserves $T_2$, the $T_1$-component must vanish, so
$\theta_2(U)=-U(f_1)$ for all $U\in T_2$.  Since $D$ preserves $T_1$, the
same computation applied to
$D_UX=\nabla_UX+\theta(U)X+\theta(X)U$ forces the $T_2$-component to vanish. For the commuting coordinate fields used here, torsion-freeness gives $\nabla_UX=\nabla_XU$, and hence $\theta_1(X)=-X(f_2)$.
\end{proof}

\subsection*{A.2. Mixed curvature and partial traces}

\begin{lemma}
For all $X,Y,Z\in T_1$ and $U\in T_2$,
\begin{equation}\label{eq:app-mixed-curvature}
R(X,Y,Z,U)
=
\Lambda(X,U)\,g(Y,Z)-\Lambda(Y,U)\,g(X,Z),
\end{equation}
where
\[
\Lambda(X,U):=X(f_2)U(f_1)-X(Uf_1).
\]
Symmetrically, for all $U,V,W\in T_2$ and $X\in T_1$,
\begin{equation}\label{eq:app-mixed-curvature-2}
R(U,V,W,X)
=
\mathrm M(X,U)\,g(V,W)-\mathrm M(X,V)\,g(U,W),
\qquad
\mathrm M(X,U):=X(f_2)U(f_1)-X(Uf_2).
\end{equation}
\end{lemma}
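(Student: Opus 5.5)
The plan is to compute $R(X,Y,Z,U)=g(R(X,Y)Z,U)$ directly from the connection components of the previous lemma. We work with coordinate fields $X,Y,Z$ on $\mathcal U_1$ and $U$ on $\mathcal U_2$, so that $[X,Y]=0$, and write
\[
R(X,Y,Z,U)=g(\nabla_X\nabla_YZ,U)-g(\nabla_Y\nabla_XZ,U).
\]
Since this is antisymmetric in $X,Y$, it suffices to compute $g(\nabla_X\nabla_YZ,U)$ and then antisymmetrize. We expand it as
\[
g(\nabla_X\nabla_YZ,U)=X\bigl(g(\nabla_YZ,U)\bigr)-g(\nabla_YZ,\nabla_XU).
\]

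For the first term, \eqref{eq:app-normal-connection} gives
\[
g(\nabla_YZ,U)=-g(Y,Z)\,U(f_1).
\]
Differentiating along $X$ yields
\[
-X\bigl(g(Y,Z)\bigr)U(f_1)-g(Y,Z)\,X(Uf_1).
\]

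For the second term, \eqref{eq:app-mixed-connection} gives $\nabla_XU=U(f_1)X+X(f_2)U$. Pairing with $\nabla_YZ$ produces
\[
U(f_1)\,g(\nabla_YZ,X)-X(f_2)\,g(Y,Z)\,U(f_1),
\]
where we used the normal-component formula again in the second summand.

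Next we antisymmetrize in $X\leftrightarrow Y$. The terms $-X(g(Y,Z))U(f_1)-U(f_1)g(\nabla_YZ,X)$ and their swapped counterparts must combine into something involving only $g(\nabla_XY,Z)$-type quantities. Metric compatibility and torsion-freeness give
\[
X\bigl(g(Y,Z)\bigr)=g(\nabla_XY,Z)+g(Y,\nabla_XZ),
\qquad
\nabla_XY=\nabla_YX.
\]
With these, the $U(f_1)$-weighted pieces cancel after antisymmetrization. This cancellation is the step I expect to need the most care, because it mixes the $T_1$- and $T_2$-components of $\nabla_YZ$, and the signs must be tracked exactly.

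The terms that survive are
\[
\bigl(X(f_2)U(f_1)-X(Uf_1)\bigr)g(Y,Z)-\bigl(Y(f_2)U(f_1)-Y(Uf_1)\bigr)g(X,Z).
\]
This is precisely $\Lambda(X,U)g(Y,Z)-\Lambda(Y,U)g(X,Z)$, which proves \eqref{eq:app-mixed-curvature}.

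As a sanity check, in a warped product with $f_1$ depending only on $\mathcal U_2$ and $f_2=0$, the formula should reduce to the known expression $-\Hess f_1$-type mixed curvature. Finally, \eqref{eq:app-mixed-curvature-2} follows from the identical computation with the roles of the factors exchanged ($f_1\leftrightarrow f_2$, $T_1\leftrightarrow T_2$). After the swap the quadratic term $U(f_1)X(f_2)$ is unchanged, while the second-derivative term becomes $X(Uf_2)$, which gives $\mathrm M$.
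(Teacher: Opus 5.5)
Your argument is correct and is essentially the paper's proof. You expand $g(\nabla_X\nabla_YZ,U)$ with the two connection formulas and note that the $U(f_1)$-weighted terms antisymmetrize to $-U(f_1)\,g([X,Y],Z)=0$; the second identity then follows by exchanging the factors, using $U(Xf_2)=X(Uf_2)$. Your side remark is off, though: in the warped-product check ($f_2=0$, $f_1$ a function on $\mathcal U_2$) one gets $\Lambda\equiv0$, so these three-fibre/one-base components vanish, as O'Neill's formulas predict, and the Hessian of the warping function appears only in components with two slots in each factor.
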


\begin{proof}
Both sides of \eqref{eq:app-mixed-curvature} are tensorial, so it suffices to
prove the identity for coordinate fields $X,Y,Z$ on $\mathcal U_1$ and $U$ on
$\mathcal U_2$; then $[X,Y]=0$ and
\[
g(R(X,Y)Z,U)=g(\nabla_X\nabla_YZ,U)-g(\nabla_Y\nabla_XZ,U).
\]
By \eqref{eq:app-normal-connection}, $g(\nabla_YZ,U)=-g(Y,Z)U(f_1)$, and by
\eqref{eq:app-mixed-connection}, $\nabla_XU=U(f_1)X+X(f_2)U$.  Hence
\begin{align*}
g(\nabla_X\nabla_YZ,U)
&=X\bigl(g(\nabla_YZ,U)\bigr)-g(\nabla_YZ,\nabla_XU)\\
&=-X\bigl(g(Y,Z)\bigr)U(f_1)-g(Y,Z)X(Uf_1)\\
&\qquad-U(f_1)g(\nabla_YZ,X)+X(f_2)U(f_1)g(Y,Z).
\end{align*}
Antisymmetrizing in $X$ and $Y$, the terms carrying the factor $-U(f_1)$
combine into
\[
-U(f_1)\Bigl[
X\bigl(g(Y,Z)\bigr)-Y\bigl(g(X,Z)\bigr)
+g(\nabla_YZ,X)-g(\nabla_XZ,Y)
\Bigr]
=-U(f_1)\,g([X,Y],Z)=0,
\]
where we expanded $X(g(Y,Z))=g(\nabla_XY,Z)+g(Y,\nabla_XZ)$ and used
$\nabla_XY-\nabla_YX=[X,Y]=0$.  What remains is exactly
\eqref{eq:app-mixed-curvature}.  Identity
\eqref{eq:app-mixed-curvature-2} follows by exchanging the roles of the two
factors, using $U(Xf_2)=X(Uf_2)$.
\end{proof}

\begin{proposition}[Partial traces]\label{app:partial-traces-formula}
Let $e_1,\ldots,e_p$ and $u_1,\ldots,u_q$ be local $g$-orthonormal frames of
$T_1$ and $T_2$.  Then, for $X\in T_1$ and $U\in T_2$,
\begin{align}
\Ric^{(1)}(X,U):=\sum_{a=1}^{p}R(X,e_a,e_a,U)
&=(p-1)\bigl(X(f_2)U(f_1)-X(Uf_1)\bigr),
\label{eq:app-partial-1}\\
\Ric^{(2)}(X,U):=\sum_{\alpha=1}^{q}R(X,u_\alpha,u_\alpha,U)
&=(q-1)\bigl(X(f_2)U(f_1)-X(Uf_2)\bigr).
\label{eq:app-partial-2}
\end{align}
Consequently
\begin{equation}\label{eq:app-mixed-Ricci}
\boxed{
\Ric(X,U)
=
(1-p)X(Uf_1)+(1-q)X(Uf_2)
+(n-2)X(f_2)U(f_1).
}
\end{equation}
\end{proposition}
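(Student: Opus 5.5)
The plan is to take the two partial traces of the mixed curvature identities \eqref{eq:app-mixed-curvature} and \eqref{eq:app-mixed-curvature-2} just proved. Before tracing, we need to know whether the relevant components match the slot pattern of those lemmas, and what sign they carry.

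For \eqref{eq:app-partial-1}, we use \eqref{eq:app-mixed-curvature} with $Y=Z=e_a$ and sum over $a$. By tensoriality this is legitimate for the orthonormal frame, even though the lemma was checked on coordinate fields. The convention needed is $R(X,Y,Z,U)=g(R(X,Y)Z,U)$, which agrees with $\Ric(X,Y)=\sum_i g(R(X,e_i)e_i,Y)$. This gives
\[
\sum_a R(X,e_a,e_a,U)=\sum_a\bigl(\Lambda(X,U)g(e_a,e_a)-\Lambda(e_a,U)g(X,e_a)\bigr).
\]
The first term contributes $p\Lambda(X,U)$. Since $\Lambda(\cdot,U)$ is linear in its first slot on $T_1$, the second term contributes $-\Lambda(X,U)$. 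The total is $(p-1)\Lambda(X,U)$, as claimed.

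For \eqref{eq:app-partial-2}, the pattern in the desired sum is $R(X,u_\alpha,u_\alpha,U)$, whereas \eqref{eq:app-mixed-curvature-2} is stated for $R(U,V,W,X)$. I would convert using pair symmetry and the skew-symmetries:
\[
R(X,u_\alpha,u_\alpha,U)=R(u_\alpha,U,X,u_\alpha)=R(U,u_\alpha,u_\alpha,X).
\]
This has the form $R(U,V,W,X)$ with $V=W=u_\alpha$. Tracing then gives $q\,\mathrm M(X,U)-\mathrm M(X,U)=(q-1)\mathrm M(X,U)$. Again this uses linearity of $\mathrm M(X,\cdot)$ in its $T_2$ slot.

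Finally, $\Ric(X,U)$ is the sum of the two partial traces over an adapted frame $\{e_a\}\cup\{u_\alpha\}$. Adding the two formulas gives
\[
(p-1)X(f_2)U(f_1)-(p-1)X(Uf_1)+(q-1)X(f_2)U(f_1)-(q-1)X(Uf_2).
\]
Collecting the coefficients of $X(f_2)U(f_1)$ yields $(n-2)$, and this is \eqref{eq:app-mixed-Ricci}.

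The main obstacle is keeping sign and slot conventions straight. First, one must check that the index placement in \eqref{eq:app-mixed-curvature} matches $R(X,e_a,e_a,U)$. Second, the symmetry conversion for the second trace must introduce no sign, which requires two swaps, each contributing a minus, so they cancel. Third, one should confirm that $\Lambda$ and $\mathrm M$ are genuinely tensorial, so that tracing in a non-coordinate orthonormal frame is allowed. This holds because $X(Uf_1)$ for coordinate fields equals the Hessian-type expression $\Hess f_1(X,U)+(\nabla_XU)f_1$. By \eqref{eq:app-mixed-connection}, the correction term is tensorial.
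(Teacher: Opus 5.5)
Your proposal is correct and follows the paper's proof essentially verbatim. You trace the mixed curvature identity over $e_a$, convert $R(X,u_\alpha,u_\alpha,U)$ to $R(U,u_\alpha,u_\alpha,X)$ by pair symmetry and two antisymmetries before tracing the second identity, and add using $(p-1)+(q-1)=n-2$; your Hessian argument for the tensoriality of $\Lambda$ and $\mathrm M$ usefully justifies a point the paper only asserts.
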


\begin{proof}
Put $Y=Z=e_a$ in \eqref{eq:app-mixed-curvature} and sum:
\[
\sum_{a}R(X,e_a,e_a,U)
=\Lambda(X,U)\sum_a g(e_a,e_a)-\sum_a g(X,e_a)\Lambda(e_a,U)
=p\,\Lambda(X,U)-\Lambda(X,U),
\]
because $\Lambda(\cdot,U)$ is a one-form on $T_1$ and
$\sum_ag(X,e_a)e_a=X$.  This is \eqref{eq:app-partial-1}.

For the second trace note that, by the pair symmetry and the two
antisymmetries of $R$,
\[
R(X,u_\alpha,u_\alpha,U)=R(U,u_\alpha,u_\alpha,X).
\]
Putting $V=W=u_\alpha$ in \eqref{eq:app-mixed-curvature-2} and summing gives,
in the same way, $(q-1)\mathrm M(X,U)$, which is \eqref{eq:app-partial-2}.

Finally $\Ric(X,U)=\Ric^{(1)}(X,U)+\Ric^{(2)}(X,U)$ by definition of the
full trace, and adding \eqref{eq:app-partial-1} and
\eqref{eq:app-partial-2} gives \eqref{eq:app-mixed-Ricci}, since
$(p-1)+(q-1)=n-2$.
\end{proof}

This proves \eqref{eq:mixed-Ricci-local} with the sign used throughout the
paper.

\subsection*{A.3. The assumption-free partial Weyl trace}

The local formulas also verify directly that the Ricci contribution cancels from the Weyl partial trace. From the Weyl decomposition, for $X\in T_1$ and $U\in T_2$,
\begin{equation}\label{eq:app-Weyl-correction}
\Ric^{(1)}(X,U)
=
\sum_{a=1}^{p}W(X,e_a,e_a,U)
+\frac{p-1}{n-2}\Ric(X,U).
\end{equation}
Put $u=X(Uf_1)$, $v=X(Uf_2)$, and $w=X(f_2)U(f_1)$. By \cref{app:partial-traces-formula},
\[
\Ric^{(1)}=(p-1)(w-u),
\qquad
\Ric=(n-2)w-(p-1)u-(q-1)v.
\]
Substitution in \eqref{eq:app-Weyl-correction} gives, without imposing $\Ric(X,U)=0$,
\[
\sum_{a=1}^{p}W(X,e_a,e_a,U)
=
\frac{(p-1)(q-1)}{n-2}(v-u).
\]
Finally, \eqref{eq:app-Lee} gives $d\theta(X,U)=v-u$. Hence the mixed partial Weyl trace \eqref{eq:mixed-partial-Weyl} is recovered locally with no curvature hypothesis. This gives an independent check of the Ricci cancellation in \cref{thm:main-obstruction}.

\subsection*{A.4. The partial Riemann traces under block-diagonal Ricci}

Combining the two formulas in \cref{app:partial-traces-formula} under the vanishing of their sum gives the following consequence.

\begin{proposition}\label{app:partial-traces}
If $\Ric(X,U)=0$ for all $X\in T_1$, $U\in T_2$, then
\begin{align}
\Ric^{(1)}(X,U)
&=
\frac{(p-1)(q-1)}{n-2}
\bigl(X(Uf_2)-X(Uf_1)\bigr),
\label{eq:app-partial-solved-1}\\
\Ric^{(2)}(X,U)
&=
-\frac{(p-1)(q-1)}{n-2}
\bigl(X(Uf_2)-X(Uf_1)\bigr).
\label{eq:app-partial-solved-2}
\end{align}
Moreover $d\theta(X,U)=X(Uf_2)-X(Uf_1)$, so
\eqref{eq:partial-Ricci-obstruction} holds.
\end{proposition}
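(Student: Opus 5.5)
The statement is purely algebraic once \cref{app:partial-traces-formula} and the local Lee forms \eqref{eq:app-Lee} are available, so I would prove it by solving a linear system on mixed pairs. Fix $X\in T_1$ and $U\in T_2$ (coordinate fields on the product chart), and abbreviate
\[
u=X(Uf_1),\qquad v=X(Uf_2),\qquad w=X(f_2)U(f_1).
\]
By \eqref{eq:app-partial-1}--\eqref{eq:app-partial-2},
\[
\Ric^{(1)}(X,U)=(p-1)(w-u),\qquad \Ric^{(2)}(X,U)=(q-1)(w-v).
\]
Their sum is $\Ric(X,U)$, which vanishes by hypothesis. This gives the single relation
\[
(n-2)w=(p-1)u+(q-1)v.
\]

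The first step is to eliminate $w$. We have
\[
w-u=\frac{(q-1)(v-u)}{n-2},\qquad w-v=-\frac{(p-1)(v-u)}{n-2}.
\]
Multiplying by $p-1$ and $q-1$ respectively gives \eqref{eq:app-partial-solved-1} and \eqref{eq:app-partial-solved-2}. Equivalently, $\Ric^{(2)}=-\Ric^{(1)}$ follows directly from the vanishing of the sum.

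The second step is to identify the Faraday form on mixed pairs. By \eqref{eq:app-Lee}, $\theta=-d_1f_2-d_2f_1$. Since $d_1\theta_1=d_2\theta_2=0$, only the mixed components of $d\theta$ matter. With $[X,U]=0$,
\[
d\theta(X,U)=X(\theta(U))-U(\theta(X))=-X(Uf_1)+U(Xf_2)=v-u,
\]
where commutation of coordinate derivatives gives $U(Xf_2)=X(Uf_2)$. This is the same identification already used in the proof of \cref{prop:mixed-Lee}.

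Combining the two steps yields
\[
\sum_aR(X,e_a,e_a,U)=\frac{(p-1)(q-1)}{n-2}\,d\theta(X,U),
\]
which is \eqref{eq:partial-Ricci-obstruction}. The identity holds for coordinate fields and extends to arbitrary $X\in T_1$, $U\in T_2$ because both sides are tensorial.

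The only step requiring care is the sign convention in $d\theta(X,U)$. I will check that the convention $d\alpha(A,B)=A\alpha(B)-B\alpha(A)-\alpha([A,B])$, without a factor $\tfrac12$, is the one consistent with $(d\theta)^\sharp=2\Theta^a$ used in Section~2. Under that convention the computation above stands.
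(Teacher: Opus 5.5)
Your proposal is correct and follows the paper's proof essentially verbatim. You use the same substitution $u,v,w$, eliminate $w$ via $\Ric(X,U)=0$, and identify $d\theta(X,U)=v-u$ from the local Lee forms; computing $d\theta(X,U)$ in one step rather than summing $d\theta_1(X,U)$ and $d\theta_2(X,U)$ is immaterial. Your convention for $d$ is the one matching $(d\theta)^\sharp=2\Theta^a$, and the extension from coordinate fields by tensoriality of the final identity is a harmless addition.
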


\begin{proof}
Put
\[
u=X(Uf_1),\qquad v=X(Uf_2),\qquad w=X(f_2)U(f_1),
\]
so that \eqref{eq:app-partial-1}--\eqref{eq:app-partial-2} read
\[
\Ric^{(1)}=(p-1)(w-u),
\qquad
\Ric^{(2)}=(q-1)(w-v).
\]
Their sum is $\Ric(X,U)=0$, whence
\[
w=\frac{(p-1)u+(q-1)v}{n-2}.
\]
Substituting,
\[
\Ric^{(1)}
=(p-1)\,\frac{(p-1)u+(q-1)v-(n-2)u}{n-2}
=\frac{(p-1)(q-1)}{n-2}(v-u),
\]
since $(p-1)-(n-2)=-(q-1)$; and $\Ric^{(2)}=-\Ric^{(1)}$.  This gives
\eqref{eq:app-partial-solved-1}--\eqref{eq:app-partial-solved-2}.

For the Faraday form, \eqref{eq:app-Lee} gives $\theta_1=-d_1f_2$ and
$\theta_2=-d_2f_1$, so for commuting coordinate fields $X\in T_1$,
$U\in T_2$,
\[
d\theta_1(X,U)=-U\bigl(\theta_1(X)\bigr)=X(Uf_2),
\qquad
d\theta_2(X,U)=X\bigl(\theta_2(U)\bigr)=-X(Uf_1),
\]
and adding the two gives $d\theta(X,U)=v-u$.
\end{proof}

\end{document}